\theoremstyle{plain}
\newtheorem{thm}{Theorem}[section]
\newtheorem{pro}[thm]{Proposition}
\newtheorem{lem}[thm]{Lemma}
\newtheorem{cla}[thm]{Claim}
\newtheorem{cor}[thm]{Corollary}
\newtheorem{con}[thm]{Conjecture}
\theoremstyle{definition}
\newtheorem{dfn}[thm]{Definition}
\newtheorem{rem}[thm]{Remark}
\newtheorem{exa}[thm]{Example}
\theoremstyle{remark}
\newcommand{\Z}{\mathbb{Z}}
\newcommand{\N}{\mathbb{N}}
\newcommand{\R}{\mathbb{R}}
\newcommand{\Q}{\mathbb{Q}}
\newcommand{\PS}{\mathbb{P}}
\newcommand{\OO}{\mathcal{O}}
\newcommand{\id}{\mathrm{id}}
\newcommand{\vphi}{\varphi}
\newcommand{\dashto}{\dashrightarrow}
\newcommand{\lto}{\longrightarrow}
\newcommand{\mcal}{\mathcal}
\DeclareMathOperator{\inte}{int}
\DeclareMathOperator{\codim}{codim}
\DeclareMathOperator{\Exc}{Exc}
\DeclareMathOperator{\mult}{mult}
\DeclareMathOperator{\GL}{GL}
\DeclareMathOperator{\Aut}{Aut}
\DeclareMathOperator{\Bir}{Bir}
\DeclareMathOperator{\Supp}{Supp}
\DeclareMathOperator{\Fix}{Fix}
\DeclareMathOperator{\Proj}{Proj}
\DeclareMathOperator{\cont}{cont}
\DeclareMathOperator{\PsAut}{PsAut}
\DeclareMathOperator{\Mob}{Mob}
\DeclareMathOperator{\Pic}{Pic} 
\DeclareMathOperator{\Div}{Div}
\DeclareMathOperator{\NEb}{\overline{\mathrm{NE}}}
\DeclareMathOperator{\Nef}{Nef}
\DeclareMathOperator{\Effb}{\overline{\mathrm{Eff}}}
\DeclareMathOperator{\Eff}{\mathrm{Eff}}
\DeclareMathOperator{\Mov}{Mov}
\DeclareMathOperator{\B}{Big}
\DeclareMathOperator{\ddiv}{div}
\begin{document}
\title{Around and beyond the canonical class}

%\date{\today}
\author{Vladimir Lazi\'c}
\address{Mathematisches Institut, Universit\"at Bayreuth, 95440 Bayreuth, Germany}
\email{vladimir.lazic@uni-bayreuth.de}

\thanks{Many thanks to P.~Cascini, A.~Corti, K.~Frantzen, D.~Greb, A.-S.~Kaloghiros, J.~Koll\'ar, A.~K\"uronya, Th.~Peternell and S.~Weigl for many useful comments and discussions. I was supported by the DFG-Forschergruppe 790 ``Classification of Algebraic Surfaces and Compact Complex Manifolds".}

\begin{abstract}
This survey is an invitation to recent developments in higher dimensional birational geometry.
\end{abstract}

\maketitle
\setcounter{tocdepth}{1}
\tableofcontents

\section{Introduction}

This survey is an invitation to recent techniques related to the Minimal Model Program. My goal is to persuade you that the MMP, at least in some of its parts, is not a subject to be afraid of any more, and that it can be swallowed by a hungry postgraduate student within one (advanced) course.

Indeed, in writing this paper, I had in particular such a student in mind. I deliberately tried not to be too pedantic, so that the material can be widely accessible, and that the exposition can be as clear as possible. 

Until recently, the proofs of foundational results in the MMP were of such technical complexity that they remained opaque to all but a handful of experts. This state of affair is changing due to the emergence of a new outlook on the subject. This new outlook is the topic of this paper.

The Minimal Model Program has seen tremendous progress in the last decade, which is measurable both in scope of the results achieved, as well as in the depth of our understanding of the subject. The seminal paper \cite{BCHM}, building on earlier results of Mori, Reid, Kawamata, Koll\'ar, Shokurov, Siu, Corti, Nakayama and many others, settled many results and advanced hugely our knowledge of the theory. The paper \cite{BCHM} builds upon, in some sense, classical theory, starting with the Cone theorem on our preferred variety, and employing a complicated induction to construct a sequence of surgery operations, which is then shown to terminate and yield a desired birational model which has exceptional properties. This, in turn, provided a proof of one of the most influential conjectures in Algebraic Geometry in the last 50 years, the finite generation of the canonical ring, posed in Zariski's famous paper \cite{Zar62}. A more general version of this result is Theorem \ref{thmA} below.

On the other hand, it has recently become clear that we can look at the picture the other way round. In \cite{Laz09,CaL10}, Theorem \ref{thmA} was proved directly and without the MMP, only by using induction on the dimension and the Kawamata-Viehweg  vanishing. The proof of this result is not the topic here, as it was clearly surveyed in \cite{Corti11,CaL12}. In this paper, I take Theorem \ref{thmA} as a \emph{black box}, and build upon it. 

The moral of the story is that this result, together with the right tools which are developed in Section \ref{sec:graded}, implies (almost) everything we know about the MMP in a clearer and quicker way. This was worked out in \cite{CoL10}, and is the content of Section \ref{sec:picture1} below. This section forms the basis for the discussion in the remainder of the paper, and it is important both from the motivational viewpoint, as well as in the scope of the techniques used.

Moreover, we will see below in Section \ref{sec:mmpbeyond} that the new outlook gives the right perspective to think about some other problems in the field. One of them is a (possibly more philosophical) question: what makes the canonical sheaf $\omega_X$ special, say on a smooth projective variety $X$. Ever since Riemann's work on curves in the 19th century, the importance of $\omega_X$ has been realised: in part because of the Riemann-Roch theorem, and in part because often it is very difficult to find reasonable and useful divisors on $X$. Of course, in the 20th century it was understood further that this line bundle is important because of Serre duality, Kodaira vanishing and so on. Therefore, it is logical to concentrate on $\omega_X$ as the centre point of classification, i.e.\ the MMP.

The class of varieties where the classical MMP works is huge -- in particular, all smooth varieties are covered. However, there are many singular varieties where the results cannot apply. Indeed, Example \ref{e_canonical} gives a normal projective variety for which no reasonable definition of the MMP attached to $\omega_X$ works. On the other hand, there are varieties, called Mori Dream Spaces, which possess a rich birational geometry similar to the classical MMP, but they need not necessarily fall into the class of singularities allowed by the classical MMP. I survey this type of varieties in Section \ref{sec:picture2}, drawing parallels to Section \ref{sec:picture1}, and this motivates what happens in Section \ref{sec:mmpbeyond}.

This begs the question whether we can formulate a framework which contains both the classical MMP and Mori Dream Spaces, and which constitutes, in some sense, the maximal class where a ``reasonable" birational geometry can be performed. Indeed, this was done in \cite{KKL12} by extending the techniques from \cite{CoL10}, and as I try to convince you in Section \ref{sec:mmpbeyond}, the result is surprisingly simple and appealing.

Finally, I close the paper with a discussion of a particular conjecture which aims to describe various cones in the space of divisors on Calabi-Yau manifolds. The Cone conjecture, due to Morrison and Kawamata, is a still pretty mysterious prediction, but I argue that it is consistent with probably the most important outstanding conjecture in birational geometry, the Abundance conjecture. On the way, we will see how the material from Section \ref{sec:graded} applies nicely to show that parts of these cones have a particularly good shape.

Throughout the paper, all varieties are normal and projective, and everything happens over the complex numbers. I follow notation and conventions from \cite{Laz04}, and anything which is not explicitly defined here, can be found there.
\section{Graded rings of higher rank}\label{sec:graded}

In this section I make a brief introduction to divisorial rings, with  particular accent on the higher rank case. It has only recently become clear that, even though at first they seem more complicated than rings graded by $\N$, once you are ready to make a brave step and develop (or are just simply willing to accept) the necessary theory, then most proofs become much easier and more conceptual.

To start with, let $X$ be a $\Q$-factorial projective variety, and let $D$ be a $\Q$-divisor on $X$. Then we define the global sections of $D$ by 
\[
H^0(X,D)=\{f\in k(X)\mid \ddiv f+D \geq 0 \}.
\]
Note that, even though $D$ might not be an integral divisor, this makes perfect sense, and that $H^0(X,D)=H^0(X,\lfloor D\rfloor)$, where the latter $H^0$ is the vector space of global sections of the standard divisorial sheaf $\OO_X(\lfloor D\rfloor)$.  This is compatible with taking sums: in other words, there is a well-defined multiplication map 
$$H^0(X,D_1)\otimes H^0(X,D_2)\to H^0(X,D_1+D_2).$$
  
Therefore, if we are given a bunch of $\Q$-divisors $D_1,\dots,D_r$ on $X$, we can define the corresponding \emph{divisorial ring} as
\[
\mathfrak R=R(X;D_1, \dots, D_r)=\bigoplus_{(n_1,\dots, n_r)\in \N^r} H^0(X,
n_1D_1+\dots + n_rD_r).
\]
When $r=1$, then we usually say that the ring $R(X,D_1)$ is the \emph{section ring} of $D_1$.

Throughout this paper, there is a recurring assumption that rings that we study are finitely generated, and we will see that this assumption alone has far-reaching consequences. 

So say that we have a divisorial ring $\mathfrak R$ as above, and assume that it is finitely generated. Then we have a corresponding cone $\mcal C=\sum\R_+ D_i$ which sits in the space of $\R$-divisors $\Div_\R(X)$. Inside $\mcal C$, there is another, much more important cone -- the \emph{support} of $\mathfrak R$. This cone, $\Supp\mathfrak{R}$, is defined as the convex hull of all integral divisors $D\in\mcal C$ which have sections, i.e.\ $H^0(X,D)\neq0$. It is easily seen that $\Supp\mathfrak R$ is a rational polyhedral cone: indeed, pick generators $f_i$ of $\mathfrak R$, and let $E_i\in\mcal C$ be the divisors such that $f_i\in H^0(X,E_i)$. Then clearly $\Supp\mathfrak R=\sum\R_+E_i$.

\begin{exa}\label{e_cutkosky}
The first example when $\mathfrak R$ is finitely generated is when all $D_i$ are semiample divisors: indeed, this is an old result of Zariski \cite{Zar62}.

On the other hand, even on curves there are divisorial rings which are not finitely generated. Indeed, let $E$ be an elliptic curve, let $D$ be a non-torsion divisor of degree $0$, and let $A$ be an ample divisor on $E$. Then the $\N^2$-graded ring 
$$R(E;D,A)=\bigoplus_{(i,j)\in\N^2}R_{i,j}$$
is not finitely generated: it is easy to see that the support of this ring is equal to the set $(\R_+D+\R_+ A)\setminus \R_{>0}D$, and hence it is not a rational polyhedral cone.

This immediately yields a surface $Y$ and a line bundle $M$ on $Y$ whose section ring is not finitely generated: set $Y=\PS(\OO_E(D)\oplus\OO_E(A))$ and $M=\OO_Y(1)$. Then $H^0(Y,M^{\otimes k})\simeq\bigoplus_{i+j=k}R_{i,j}$, hence the section ring $R(Y,M)$ is not finitely generated by the argument above.
\end{exa}

The following lemma summarises the main tools when operating with finite generation of divisorial rings. The proof can be found in \cite[\S 2.4]{CaL10}.

\begin{lem}\label{lem:3}
Let $X$ be a $\Q$-factorial projective variety, and let $D_1,\dots,D_r$ be $\Q$-divisors such that the ring $R(X;D_1,\dots,D_r)$ is finitely generated.
\begin{enumerate}
\item If $p_1,\dots,p_r\in\Q_+$, then the ring $R(X;p_1D_1,\dots,p_rD_r)$ is finitely generated. 
\item Let $G_1,\dots,G_\ell$ be $\Q$-divisors such that $G_i\in\sum\R_+D_i$ for all $i$. Then the ring $R(X;G_1,\dots,G_\ell)$ is finitely generated.
\end{enumerate} 
\end{lem}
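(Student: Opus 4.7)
The plan is to reduce both parts to semigroup arguments of Gordan's lemma type, applied to the multigraded pieces of $\mathfrak R=R(X;D_1,\dots,D_r)$. I will first treat Part (2) in the case of integer coefficients, obtain Part (1) in the integer case as a consequence, then deduce the full Part (1), and finally return to Part (2) in full generality.

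Suppose first, within Part (2), that $G_i=\sum_j m_{ij}D_j$ with $m_{ij}\in\N$; then for each $n\in\N^\ell$ the graded piece $R(X;G_1,\dots,G_\ell)_n$ coincides, as a subspace of $k(X)$, with $\mathfrak R_{\Phi(n)}$, where $\Phi\colon\N^\ell\to\N^r$ is the linear semigroup map $\Phi(n)_j=\sum_i n_i m_{ij}$. Fixing homogeneous generators $f_1,\dots,f_s$ of $\mathfrak R$ with multidegrees $d_1,\dots,d_s\in\N^r$, the sub-semigroup $P=\{(\alpha,n)\in\N^s\times\N^\ell : \sum_k \alpha_k d_k=\Phi(n)\}$ is the intersection of $\N^{s+\ell}$ with a rational linear subspace, hence finitely generated by Gordan's lemma. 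Placing the monomial $f^\alpha$ in multidegree $n$ whenever $(\alpha,n)$ is a generator of $P$ yields finitely many $k$-algebra generators of $R(X;G_1,\dots,G_\ell)$. The subcase $\ell=r$, $G_i=p_iD_i$ with $p_i\in\N$ proves Part (1) in the integer case.

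For general $p_i\in\Q_+$ in Part (1), write $p_i=a_i/b$ with common denominator $b$. After reindexing, the $(b,\dots,b)$-Veronese subring of $R':=R(X;p_1D_1,\dots,p_rD_r)$ coincides with $R(X;a_1D_1,\dots,a_rD_r)$, which is finitely generated by the case just treated. I then pass from finite generation of this Veronese to finite generation of $R'$ itself, using that any homogeneous $x\in R'_n$ satisfies $x^b\in R'^{(b,\dots,b)}$, so that $R'$ is integral over its Veronese subring, and combining this with the finite-dimensionality of the graded pieces. With Part (1) established, Part (2) in general is reduced to the integer case: since a non-empty rational polyhedron contains rational points, one can write $G_i=\sum c_{ij}D_j$ with $c_{ij}\in\Q_{\geq 0}$; choosing $N$ so that $NG_i=\sum(Nc_{ij})D_j$ has non-negative integer coefficients, Part (1) applied with the scaling $p=1/N$ identifies finite generation of $R(X;G_1,\dots,G_\ell)$ with that of $R(X;NG_1,\dots,NG_\ell)$, which is the case already done.

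The main obstacle I expect is the Veronese-to-original step in Part (1). Finite generation of $R'^{(b,\dots,b)}$ plus integrality does not by itself give finite generation of $R'$ as a $k$-algebra, and one has to argue more carefully with the shifted Veronese components $\bigoplus_{n'\in\N^r} R'_{bn'+\delta}$ for $\delta\in\{0,\dots,b-1\}^r$ to conclude that $R'$ is a finite module over its Veronese. This is the place in the proof where the projectivity of $X$ and the structure of sections of $\Q$-divisors on a $\Q$-factorial variety genuinely enter, and where I would expect the technical heart of the argument in \cite[\S 2.4]{CaL10} to lie.
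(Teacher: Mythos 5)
Your reduction chain and the Gordan-type argument for integral coefficients are correct: identifying $R(X;G_1,\dots,G_\ell)_n$ with $\mathfrak R_{\Phi(n)}$ as subspaces of $k(X)$, and generating the ring from the finitely generated monoid $P=\{(\alpha,n):\sum_k\alpha_k d_k=\Phi(n)\}$, is sound, and so is the final reduction of (2) to the integer case via a rational point of the rational polyhedron $\{c\geq 0:\sum_j c_{ij}D_j=G_i\}$. This is essentially the monoid/Veronese route of \cite[\S 2.4]{CaL10}, which the paper cites in place of a proof. The one genuine gap is the step you yourself flag: you use, but do not prove, that finite generation of the $(b,\dots,b)$-Veronese $R''=\bigoplus_n R'_{bn}\simeq R(X;a_1D_1,\dots,a_rD_r)$ implies finite generation of $R'=R(X;p_1D_1,\dots,p_rD_r)$. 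Since integrality of $R'$ over $R''$ alone does not suffice, part (1) for genuinely rational $p_i$ -- and with it the general case of part (2) -- is not established as written.

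The missing step is true, and closing it is pure commutative algebra rather than a matter of ``projectivity of $X$ and the structure of sections,'' as you guess. Embed $R'$ into $k(X)[t_1,\dots,t_r]$ by sending a homogeneous element $x\in R'_n$ to $x\,t^n$; then $R'$ is a domain whose fraction field lies inside the finitely generated field $k(X)(t_1,\dots,t_r)$. Every homogeneous $x\in R'_n$ satisfies $x^b\in R''$, so $R'$ is integral over $R''$, and $\operatorname{Frac}(R')$ is an algebraic intermediate extension of $\operatorname{Frac}(R'')$ inside a finitely generated field extension, hence finite over $\operatorname{Frac}(R'')$ (intermediate fields of finitely generated field extensions are finitely generated, and a finitely generated algebraic extension is finite). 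By Noether's theorem on finiteness of the integral closure of a finitely generated domain in a finite field extension, the integral closure of $R''$ in $\operatorname{Frac}(R')$ is a finite $R''$-module; $R'$ is an $R''$-submodule of it, and $R''$ is Noetherian, so $R'$ is a finite $R''$-module and therefore a finitely generated $k$-algebra. This is exactly the statement you wanted (each shifted piece $\bigoplus_n R'_{bn+\delta}$ is then a finite $R''$-module), and with it inserted your proof is complete and follows the same general strategy as the cited source.
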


\subsection*{An important example}
It has become clear in the last several decades that sometimes varieties are not the right objects to look at -- often, it is much more convenient to look at pairs $(X,\Delta)$, where $X$ is a normal projective variety and $\Delta$ is a Weil $\Q$-divisor on $X$ such that $K_X+\Delta$ is $\Q$-Cartier. There are plenty of reasons for looking at these objects: they obviously generalise the concept of a ($\Q$-Gorenstein) variety (by taking $\Delta=0$), they are suitable for induction because of adjunction formula, they are closely related to open varieties $X\setminus\Supp\Delta$, and so on. It is difficult to name all the advantages of working in this setting, especially since the idea of pairs and their singularities had brewed for a very long time; a good place to find a thorough explanation of all this is \cite{KM98}.

Not all pairs are useful for us. We concentrate on a special kind of pairs, those that have \emph{klt singularities}. This means the following. First note that if $f\colon Y\to X$ is a log resolution of the pair $(X,\Delta)$, that is, $Y$ is a smooth variety and the support of the set $f^{-1}_*\Delta\cup\Exc f$ is a simple normal crossings divisor, then there exists a $\Q$-divisor $R$ on $Y$ such that
$$K_Y=f^*(K_X+\Delta)+R.$$
Then we say that $(X,\Delta)$ is klt, or that it has klt singularities, if all the coefficients of $R$ are bigger than ${-}1$. It can be shown that this does not depend on the choice of the resolution $f$.

This looks like a very mysterious condition. However, a good way to think about it is to assume from the start that $X$ is smooth, that $\Supp\Delta$ has simple normal crossings, and that all coefficients of $\Delta$ lie in the open interval $(0,1)$. It is a fun exercise to prove that such a pair indeed has klt singularities. In particular, smooth varieties $X$, viewed as pairs $(X,0)$, have klt singularities.

Also of importance for us is that this is an open condition, in the following sense. Say you have at hand a klt pair $(X,\Delta)$ with $X$ being $\Q$-factorial, and that you have an effective $\Q$-divisor $D$ on $X$. Then for all rational $0\leq\varepsilon\ll1$, the pair $(X,\Delta+\varepsilon D)$ is again klt. This is easy to see from the definition. 

Therefore, divisors of the form $K_X+\Delta$ are of special importance for us,  and they are called \emph{adjoint divisors}. A special case of the divisorial ring above is when all $D_i$ are (multiples of) adjoint divisors -- we then say that the ring $\mathfrak R$ is an {\em adjoint ring\/}. 

Now we are ready to state the most important example of a finitely generated divisorial ring.

\begin{thm}\label{thmA}
Let $X$ be a $\Q$-factorial projective variety, and let $\Delta_1,\dots,\Delta_r$ be big $\Q$-divisors such that all pairs $(X,\Delta_i)$ are klt.

Then the adjoint ring
\[
R(X;K_X+\Delta_1,\dots,K_X+\Delta_r)
\]
is finitely generated.
\end{thm}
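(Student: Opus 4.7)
My overall strategy is induction on $n=\dim X$, reducing the finite generation of the $r$\nobreakdash-graded adjoint ring to a lower-dimensional adjoint ring via restriction to a carefully chosen prime divisor $S\subset X$, and then lifting sections back to $X$ by an extension theorem. The base case $n=1$ is immediate, since on a curve an adjoint $\Q$-divisor $K_X+\Delta_i$ of positive degree is ample and its section ring is finitely generated by Zariski (Example \ref{e_cutkosky}).

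For the inductive step I would begin with several reductions. Using Lemma \ref{lem:3} I can replace the $\Delta_i$ by any convenient finite set of $\Q$\nobreakdash-divisors spanning the same rational polyhedral cone in $\Div_\R(X)$, and I can clear denominators. After passing to a common log resolution I may assume $X$ is smooth and $\sum\Supp\Delta_i$ has simple normal crossings. Since each $\Delta_i$ is big, Kodaira's lemma gives $\Delta_i\sim_\Q A_i+E_i$ with $A_i$ ample and $E_i\geq 0$; perturbing each $\Delta_i$ slightly inside its klt locus (an open condition, as noted in the excerpt) I can arrange that every $\Delta_i$ contains, with coefficient close to $1$, a fixed smooth prime divisor $S$, while keeping $(X,\Delta_i)$ plt along $S$ and klt elsewhere. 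This is the standard maneuver that makes $S$ a ``candidate extremal divisor'' for all the adjoint classes simultaneously.

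The core of the argument is then the following two-step pattern. First, adjunction along $S$ yields $(K_X+\Delta_i)|_S=K_S+\Delta_i^S$ where each $(S,\Delta_i^S)$ is klt with $\Delta_i^S$ big on $S$; therefore the restricted ring $\mathfrak R|_S$ is a quotient of an adjoint ring on $S$, and by the inductive hypothesis it is finitely generated. Second, and this is the heart of the matter, I need an \emph{extension theorem} asserting that sections of sufficiently divisible multiples of $K_S+\Delta_i^S$ lift to sections of $K_X+\Delta_i$; the input here is a Hacon--McKernan-type statement proved via Nadel vanishing, asymptotic multiplier ideals, and Kawamata--Viehweg vanishing, applied uniformly across the rational polyhedral cone $\Supp\mathfrak R$. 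Granted such a lifting, finite generation of $\mathfrak R|_S$ together with finite generation of the kernel of restriction (which consists of sections vanishing along $S$, handled by stripping off $S$ and decreasing induction on the coefficient of $S$, invoking Lemma \ref{lem:3} again) yields finite generation of $\mathfrak R$.

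The principal obstacle, and the reason this theorem is deep rather than formal, is the extension theorem in the multigraded setting: one needs to lift sections \emph{simultaneously and compatibly} for every rational adjoint class in $\Supp\mathfrak R$, not just for a single divisor. Getting the lifting to behave well under Diophantine approximation as one moves to the boundary of the klt region, where pairs degenerate from klt to log canonical, is exactly where the classical approaches diverged into the MMP-with-scaling machinery of \cite{BCHM} versus the direct induction of \cite{Laz09,CaL10}. A secondary technical nuisance is ensuring that the ``candidate divisor'' $S$ can be chosen for all classes at once; this forces one to work with a carefully saturated finite set of generators and to intertwine the induction on $\dim X$ with an induction on the complexity of $\Supp\mathfrak R$.
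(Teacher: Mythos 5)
The survey you are working from does not actually prove Theorem \ref{thmA}: it is imported as a \emph{black box}, with the MMP proof credited to \cite{BCHM} and the MMP-free proof to \cite{Laz09,CaL10}, the latter being exactly the route you sketch (induction on the dimension plus lifting of sections via Kawamata--Viehweg vanishing). So your outline is faithful in spirit to the strategy of the cited direct proof rather than to anything argued in this paper, and as a roadmap it identifies the right ingredients.

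As a proof attempt, however, it has a genuine gap, which you yourself flag: the multigraded extension theorem \emph{is} the substantive content of the theorem, and you assume it rather than establish it; with it granted, the remaining steps are still not routine. Two of your reductions do not work as stated. First, the claim that after a small perturbation one can arrange a single fixed smooth prime divisor $S$ appearing in \emph{every} $\Delta_i$ with coefficient close to $1$, plt along $S$, is not available in general for all the adjoint classes simultaneously; in \cite{CaL10} the statement proved by induction is a more elaborate package involving finitely many components with coefficient exactly $1$, a log smooth plt setting reached only after Diophantine approximation and convex-geometric bookkeeping over the whole cone, and the extension statement proved \emph{jointly} with finite generation in the induction. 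Second, handling the kernel of restriction by ``stripping off $S$ and decreasing induction on the coefficient of $S$'' does not terminate as described, since the relevant coefficients range over an infinite set of rationals as one moves in the cone; in the references this is dealt with by saturation and polyhedrality arguments, not by a naive downward induction. So your proposal is a correct high-level plan of the known argument, but the heart of the proof --- the uniform lifting across the cone and the convex/Diophantine control that makes the double induction close --- is missing.
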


This was first proved in \cite{BCHM} by employing the full machinery of the classical MMP: the idea is to prove that a certain version of the Minimal Model Program works, and then to deduce the finite generation as a consequence. However, as mentioned in the introduction, of importance here for us is that Theorem \ref{thmA} can be proved without the MMP, and this was done in \cite{Laz09,CaL10}. 

\subsection*{Asymptotic valuations}
We will see that finite generation of a divisorial ring $\mathfrak R$ has important consequences on the convex geometry of the cone $\Supp\mathfrak R$. We would like to relate the ring $\mathfrak R$ to the behaviour of linear systems $|D|$ for integral divisors $D\in\Supp\mathfrak R$. The way to achieve this is via asymptotic geometric valuations. 

Let $X$ be a $\Q$-factorial projective variety. Then each prime divisor $\Gamma$ on $X$ gives a valuation on the ring of rational function $k(X)$ as the order of vanishing at the generic point of $\Gamma$. This is not sufficient, as the behaviour of elements of $k(X)$ depends also on the higher codimension points. 

Therefore a {\em geometric valuation\/} $\Gamma$ on $X$ is any valuation on $k(X)$ which is given by the order of vanishing at the generic point of a prime divisor on some birational model $Y\to X$, and we denote the value of this valuation on a $\Q$-divisor $D$ by $\mult_\Gamma D$; in other words, we take into account exceptional divisors as well. 

Now, if $D$ is an \emph{effective} $\Q$-Cartier divisor, then the {\em asymptotic order of vanishing\/} of $D$ along $\Gamma$ is 
$$o_\Gamma (D)=\inf\{\mult_\Gamma D'\mid D\sim_\Q D'\geq0\};$$
put differently, if $\mult_\Gamma|kD|$ is the valuation at $\Gamma$ of a general element of the linear system $|kD|$, then 
$$o_\Gamma(D)=\inf\frac1k\mult_\Gamma|kD|$$
over all $k$ sufficiently divisible. 

It is straightforward to see that each $o_\Gamma$ is a homogeneous function of degree $1$, that 
$$o_\Gamma(D+D')\leq o_\Gamma(D)+o_\Gamma(D')$$ 
for every two effective $\Q$-divisors $D$ and $D'$, and that 
$$o_\Gamma(A)=0$$ 
for every semiample divisor $A$. The following is a basic result \cite{Nak04}:

\begin{lem}\label{lem:numericalBig}
Let $X$ be a $\Q$-factorial projective variety, and let $D$ and $D'$ be two big $\Q$-divisors on $X$ such that $D\equiv D'$. Then $o_\Gamma(D)=o_\Gamma(D')$. 
\end{lem}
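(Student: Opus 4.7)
Set $N := D' - D$, which is numerically trivial. The plan is to approximate $D$ and $D'$ by the divisors $D + \varepsilon A$ (respectively $D' + \varepsilon A$) for a small ample $\Q$-divisor $A$, exploiting the fact that ample, and hence semiample, classes do not contribute to $o_\Gamma$. Fix a rational ample divisor $A$, scaled small enough that $D - A$ and $D' - A$ are still big (possible by openness of the big cone). For every rational $\varepsilon > 0$, the $\Q$-divisors $\varepsilon A \pm N$ are numerically ample, hence ample by Kleiman's criterion, hence semiample, so
\[
o_\Gamma(\varepsilon A \pm N) = 0.
\]

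Now write $D + \varepsilon A = D' + (\varepsilon A - N)$ and apply subadditivity to the two big summands:
\[
o_\Gamma(D + \varepsilon A) \leq o_\Gamma(D') + o_\Gamma(\varepsilon A - N) = o_\Gamma(D'),
\]
and symmetrically $o_\Gamma(D' + \varepsilon A) \leq o_\Gamma(D)$. If one can establish the continuity statement $\lim_{\varepsilon \to 0^+} o_\Gamma(D + \varepsilon A) = o_\Gamma(D)$ for big $D$, and likewise for $D'$, then letting $\varepsilon \to 0^+$ yields $o_\Gamma(D) \leq o_\Gamma(D')$, and the reverse inequality follows by symmetry.

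The continuity claim is the main obstacle, and it is where bigness is essential. The easy direction, $o_\Gamma(D + \varepsilon A) \leq o_\Gamma(D) + o_\Gamma(\varepsilon A) = o_\Gamma(D)$, is immediate from subadditivity. For the matching lower bound I would rely on the identity
\[
(1 + \varepsilon)\, D = (D + \varepsilon A) + \varepsilon (D - A),
\]
whose two summands are both big by the choice of $A$. Subadditivity combined with the degree-one homogeneity of $o_\Gamma$ then gives
\[
(1+\varepsilon)\, o_\Gamma(D) \leq o_\Gamma(D + \varepsilon A) + \varepsilon\, o_\Gamma(D - A),
\]
so $o_\Gamma(D + \varepsilon A) \geq o_\Gamma(D) + \varepsilon \bigl( o_\Gamma(D) - o_\Gamma(D - A) \bigr)$, which tends to $o_\Gamma(D)$ as $\varepsilon \to 0^+$. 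This squeeze breaks down once $D$ fails to be big, which is precisely why Nakayama's asymptotic order on merely pseudo-effective classes has to be defined as the limit $\lim_{\varepsilon \to 0^+} o_\Gamma(D + \varepsilon A)$ from the start, with numerical invariance then being almost built in rather than a theorem.
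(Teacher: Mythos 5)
Your proof is correct and follows essentially the same route as the paper: perturb by a small ample divisor, use that $D-D'+\varepsilon A$ is numerically equivalent to $\varepsilon A$ and hence ample to get $o_\Gamma(D+\varepsilon A)\leq o_\Gamma(D')$, and then prove continuity $\lim_{\varepsilon\downarrow 0}o_\Gamma(D+\varepsilon A)=o_\Gamma(D)$ using bigness of $D$. The only (cosmetic) difference is in the continuity step, where you use the decomposition $(1+\varepsilon)D=(D+\varepsilon A)+\varepsilon(D-A)$ with $D-A$ big, while the paper invokes Kodaira's trick $D\sim_\Q \delta A+E$ with $E\geq 0$; these are interchangeable, and your squeeze is valid since $o_\Gamma(D-A)$ is finite.
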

\begin{proof}
I first claim that for any ample $\Q$-divisor $A$, we have 
$$o_\Gamma(D)=\lim_{\varepsilon\downarrow0}o_\Gamma(D+\varepsilon A).$$ 
To this end, note that by Kodaira's trick we can write $D\sim_\Q \delta A+E$ for some rational $\delta>0$ and an effective $\Q$-divisor $E$. Therefore
$$(1+\varepsilon)o_\Gamma(D)=o_\Gamma(D+\varepsilon\delta A+\varepsilon E)\leq o_\Gamma(D+\varepsilon\delta A)+\varepsilon o_\Gamma(E)\leq o_\Gamma(D)+\varepsilon o_\Gamma(E),$$
and we obtain the claim by letting $\varepsilon\downarrow0$.

Now, fix an ample divisor $A$ and a rational number $\varepsilon>0$. Since the divisor $D-D'+\varepsilon A$ is numerically equivalent to $\varepsilon A$, and thus ample, we have
$$o_\Gamma(D+\varepsilon A)=o_\Gamma\big(D'+(D-D'+\varepsilon A)\big)\leq o_\Gamma(D').$$
Letting $\varepsilon\downarrow0$ and applying the claim, we get $o_\Gamma(D)\leq o_\Gamma(D')$. The reverse inequality is analogous.
\end{proof}

Now we have all the theory needed to state the result which gives us the main relation between finite generation and the behaviour of linear systems.

\begin{thm}
\label{thm:ELMNP}
Let $X$ be a $\Q$-factorial projective variety, and let $D_1,\dots,D_r$ be $\Q$-divisors on $X$. Assume that the ring $\mathfrak R=R(X;D_1,\dots,D_r)$ is finitely generated. Then:
\begin{enumerate}
\item $\Supp \mathfrak R$ is a rational polyhedral cone,
\item if $\Supp \mathfrak R$ contains a big divisor, then all pseudo-effective divisors in $\sum\R_+ D_i$ are in fact effective,
\item there is a finite rational polyhedral subdivision $\Supp \mathfrak R=\bigcup \mcal{C}_i$ into cones of maximal dimension, such that $o_\Gamma$ is linear on $\mcal{C}_i$ for every geometric valuation $\Gamma$ over $X$,
\item there exists a positive integer $k$ such that $o_\Gamma(kD)=\mult_\Gamma|kD|$ for every integral divisor $D\in \Supp \mathfrak R$.
\end{enumerate}
\end{thm}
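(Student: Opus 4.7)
\emph{Parts (1) and (2).} Given generators $f_1,\dots,f_N$ of $\mathfrak R$ with $f_j \in H^0(X,E_j)$ and $E_j \in \sum\N D_i$, one has $\Supp\mathfrak R = \sum\R_{\geq 0} E_j$, as already observed before Example~\ref{e_cutkosky}; this gives (1). For (2), let $B\in\Supp\mathfrak R$ be big and let $D\in\sum\R_+ D_i$ be pseudo-effective. For any rational $\varepsilon>0$ and any rational $D'\in\sum\Q_+ D_i$ close to $D$, the $\Q$-divisor $D' + \varepsilon B$ is big and lies in $\sum\Q_+ D_i$, so a sufficiently divisible integer multiple is an effective integral divisor in $\sum\N D_i$; hence $D' + \varepsilon B \in \Supp\mathfrak R$. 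Letting $D'\to D$ and $\varepsilon\downarrow 0$ and using that $\Supp\mathfrak R$ is closed by (1) forces $D\in\Supp\mathfrak R$, so $D$ is effective.

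\emph{Setup for (3) and (4).} Set $F_j = \ddiv f_j + E_j \geq 0$, pick a log resolution $\pi\colon Y\to X$ of $\sum F_j$, and let $G_1,\dots,G_L$ be the prime components of the support of $\pi^*\sum F_j$, writing $\pi^* F_j = \sum_i a_{ji} G_i$ with $a_{ji}\in\Q_{\geq 0}$. For any integral $D \in \Supp\mathfrak R$, $H^0(X,D)$ is spanned by monomials $\prod f_j^{m_j}$ with $(m_j)\in\N^N$ and $\sum m_j E_j = D$; hence the divisorial base ideal of $|\pi^*D|$ on $Y$ equals $\OO_Y(-N_Y(D))$ where
\[
N_Y(D) \,=\, \min\nolimits_{(m_j)\in\N^N,\ \sum m_j E_j = D}\ \sum\nolimits_j m_j \pi^* F_j,
\]
the minimum taken coefficient-wise on each $G_i$. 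Its real relaxation $N_Y^{\R}(D)$, obtained by minimizing the same expression over $t_j\in\R_{\geq 0}$ with $\sum t_j E_j = D$, satisfies $\tfrac{1}{k}N_Y(kD)\to N_Y^{\R}(D)$ coefficient-wise for $k$ sufficiently divisible, and one checks that $o_\Gamma(D) = \mult_\Gamma N_Y^{\R}(D)$ for every geometric valuation $\Gamma$ over $X$ (the right-hand side automatically vanishes when the centre of $\Gamma$ on $Y$ avoids $\bigcup G_i$, in agreement with $o_\Gamma(D)=0$ in that case).

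\emph{The key linear-programming input.} I claim that $N_Y^{\R}\colon\Supp\mathfrak R\to\sum_i \R_{\geq 0}G_i$ is piecewise linear on a finite rational polyhedral subdivision $\Supp\mathfrak R=\bigcup\mcal C_\alpha$ that is \emph{independent} of $\Gamma$. For each fixed $i$, the coefficient of $G_i$ in $N_Y^{\R}(D)$ is the optimal value of the parametric LP $\min\sum_j t_j a_{ji}$ subject to $\sum_j t_j E_j = D$, $t_j\geq 0$; by standard parametric-LP sensitivity, this value function is concave, rational and piecewise linear in $D$, linear on each cell of a finite rational polyhedral subdivision of $\Supp\mathfrak R$ indexed by optimal bases $S\subseteq\{1,\dots,N\}$. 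Taking a common refinement over the \emph{finitely many} indices $i=1,\dots,L$ yields the required uniform subdivision $\{\mcal C_\alpha\}$, on which every $G_i$-coefficient of $N_Y^{\R}$, and hence $N_Y^{\R}$ itself, is linear. Consequently, for each geometric valuation $\Gamma$ the function $o_\Gamma(D) = \sum_i (\mult_\Gamma G_i)\cdot(\text{coeff of }G_i\text{ in }N_Y^{\R}(D))$ is a non-negative linear combination of linear functions on $\mcal C_\alpha$, hence linear on $\mcal C_\alpha$, proving (3). This uniformity of the subdivision across all $\Gamma$ is the main obstacle and the conceptual heart of the argument; finiteness of $L$ is what makes it work.

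\emph{Part (4).} On each cell $\mcal C_\alpha$, the real LP optimum is attained at a basic feasible vertex $(t_j^\alpha(D))$ which is a linear function of $D$ with rational coefficients of bounded denominator, and only finitely many denominators appear across all cells and all $j$. Choose a positive integer $k$ divisible by all of them. Then for every integral $D\in\mcal C_\alpha$ one has $k(t_j^\alpha(D))\in\N^N$, and this integer vector realises the coefficient-wise minimum in the integer LP defining $N_Y(kD)$; therefore $N_Y(kD)=kN_Y^{\R}(D)$. Applying $\mult_\Gamma$ and invoking $\mult_\Gamma|kD|=\mult_\Gamma N_Y(kD)$ and $o_\Gamma(kD)=k o_\Gamma(D)=k\mult_\Gamma N_Y^{\R}(D)$, we conclude $\mult_\Gamma|kD|=o_\Gamma(kD)$ for every $\Gamma$ and every integral $D\in\Supp\mathfrak R$.
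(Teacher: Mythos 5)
Parts (1) and (2) of your proposal are fine and agree with the paper, which proves (1) via the generators and treats (2) as an easy exercise; for (3) and (4) the paper gives no proof at all (it cites ELMNP), and your idea of expressing sections as monomials in the generators so that orders of vanishing become parametric linear programs is indeed the right sort of argument. However, there is a genuine gap at the pivotal identities $o_\Gamma(D)=\mult_\Gamma N_Y^{\R}(D)$ and, in (4), $\mult_\Gamma|kD|=\mult_\Gamma N_Y(kD)$: these equate the order of a general member of $|kD|$ along an \emph{arbitrary} geometric valuation $\Gamma$ with the order of the \emph{divisorial} fixed part on the fixed resolution $Y$. This ignores the non-divisorial base locus of the monomial system on $Y$, which is a union of strata of $\bigcup G_i$ and is nonempty in general (this is exactly the phenomenon responsible for flips). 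Concretely, the order of a general member along $\Gamma$ is the single joint minimum $\min\bigl\{\sum_j m_j\mult_\Gamma F_j\bigr\}$ over admissible exponent vectors, whereas $\mult_\Gamma N_Y(kD)=\sum_i \mult_\Gamma(G_i)\cdot\min\bigl\{\sum_j m_j a_{ji}\bigr\}$ takes a \emph{separate} minimum for each $i$, and the former can be strictly larger: if, say, the monomials of degree $kD$ are $f_1^{m_1}f_2^{m_2}$ with divisors $m_1G_1+m_2G_2$ and $G_1\cap G_2\neq\emptyset$, the divisorial fixed part is zero, yet every $\Gamma$ with centre inside $G_1\cap G_2$ has $\mult_\Gamma|kD|\geq k$. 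Since your (3) is deduced from linearity of the $G_i$-coefficients of $N_Y^{\R}$ and your (4) invokes the false identity, both arguments fail precisely for those valuations for which the statement has content beyond the divisorial one.

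The repair stays inside your framework. For each $\Gamma$ one has $\mult_\Gamma|kD|=\min\bigl\{\sum_j m_j c_{j,\Gamma}\bigr\}$ with $c_{j,\Gamma}=\mult_\Gamma F_j$ (the valuation of a general linear combination is the minimum of the valuations of the monomials), so $o_\Gamma$ is the value function of a parametric LP whose objective now depends on $\Gamma$, an infinite family. To obtain a subdivision independent of $\Gamma$, use LP duality: the linearity regions of any such value function are normal cones at vertices of the dual polyhedron, hence cones spanned by subsets of $\{E_j\}$; therefore the common refinement of all cones spanned by subsets of the $E_j$ is a finite rational polyhedral fan on which \emph{every} $o_\Gamma$ is linear, and your bounded-denominator argument for (4) then goes through with this subdivision in place of yours. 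Two smaller points: the spanning-by-monomials claim only holds for integral divisors in the monoid $\sum\N D_i$, while an arbitrary integral divisor of $\Supp\mathfrak R$ need not admit any monomial presentation, so in (4) you must first pass to a uniformly bounded multiple (Gordan's lemma provides the bound), which is harmless since both sides are homogeneous; and the value function of a minimisation LP as a function of its right-hand side is convex, not concave.
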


We already saw (1) before, and the proof of (2) is also very easy. I omit the proof of (3) and (4), but it is not too difficult once one sets up a good basis of algebra and convex geometry. This important result is contained in the proof of \cite[Theorem 4.1]{ELMNP}, and is merely extracted verbatim in \cite[Theorem 3.6]{CoL10}.

A simple, but as we will see important consequence is the following. 

\begin{lem}\label{lem:ords}
Let $X$ be a normal projective variety and let $D$ be an effective $\Q$-divisor on $X$. Then $D$ is semiample if and only if $R(X, D)$ is finitely generated and $o_{\Gamma}(D)=0$ for all geometric valuations $\Gamma$ over $X$.
\end{lem}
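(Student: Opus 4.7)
The forward implication is immediate from results already recorded in the excerpt: semiampleness of $D$ guarantees finite generation of $R(X,D)$ by Zariski's theorem (Example \ref{e_cutkosky}), while $o_\Gamma(D)=0$ for any semiample $\Q$-divisor was noted just before Lemma \ref{lem:numericalBig}. So only the converse needs argument, and my plan is to reduce it to Theorem \ref{thm:ELMNP}(4) followed by a standard log-resolution decomposition of $|kD|$.

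For the converse, set $\mathfrak R = R(X,D)$. Because $D$ is effective, the constant section $1$ shows $H^0(X,D)\neq 0$, hence $D\in\Supp\mathfrak R$. Apply Theorem \ref{thm:ELMNP}(4) to obtain a positive integer $k$ with $o_\Gamma(kD)=\mult_\Gamma|kD|$ for every geometric valuation $\Gamma$ over $X$. Combining this with the homogeneity of $o_\Gamma$ and the hypothesis $o_\Gamma(D)=0$, I conclude $\mult_\Gamma|kD|=0$ for every geometric valuation $\Gamma$ over $X$.

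Now take a log resolution $\pi\colon Y\to X$ of the base ideal of $|kD|$, and write
$$\pi^*|kD|=|M|+F,$$
where $|M|$ is the base-point-free moving part and $F$ is the fixed divisor. For every prime divisor $\Gamma\subset Y$, a general member of $|M|$ avoids $\Gamma$, so $\mult_\Gamma F=\mult_\Gamma|kD|$ when $\Gamma$ is viewed as a geometric valuation over $X$. By the previous step this quantity vanishes for all such $\Gamma$, forcing $F=0$. Hence $\pi^*(kD)$ is base-point free on $Y$. Since $X$ is normal and $\pi$ is birational, the projection formula gives $H^0(Y,\pi^*\OO_X(kD))=H^0(X,\OO_X(kD))$, and the base-point freeness on $Y$ descends to base-point freeness of $|kD|$ on $X$; thus $D$ is semiample.

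The only real content is Theorem \ref{thm:ELMNP}(4), which is used as a black box; the potential pitfall is verifying that the hypothesis $o_\Gamma(D)=0$ over \emph{all} geometric valuations (not just divisors on $X$) is exactly what is needed to kill the fixed part on $Y$, since the log resolution typically introduces new exceptional divisors. That is precisely why asymptotic orders must be evaluated on arbitrary birational models, and once this is registered the argument is entirely formal.
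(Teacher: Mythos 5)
Your proof is correct and takes essentially the same route as the paper: the forward direction via Example \ref{e_cutkosky} and the vanishing of $o_\Gamma$ on semiample divisors, and the converse via Theorem \ref{thm:ELMNP}(4) to conclude that $|kD|$ is basepoint free. The paper compresses the converse into one line (every $x\in X$ lies outside the base locus of $|mD|$ for $m$ sufficiently divisible, by testing against a valuation over $x$); your log-resolution and moving/fixed-part decomposition simply makes that implicit step explicit.
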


The proof is very simple: if $D$ is semiample, then we conclude by Example \ref{e_cutkosky}. Conversely, for every point $x\in X$, Theorem \ref{thm:ELMNP} implies that $x$ does not belong to the base locus of the linear system $|mD|$ for $m$ sufficiently divisible.

As a demonstration of the previous two results, we will see immediately how inside the cone $\Supp\mathfrak R$, all the cones that we can imagine behave nicely. Recall first that the \emph{movable cone} $\overline{\Mov}(X)$ is the closure of the cone spanned by movable divisors -- these are integral divisors whose base loci are of codimension at least $2$.

The following is effectively the proof of Mori's Cone theorem, see Section \ref{sec:picture1}. I borrow the proof from \cite{CoL10,KKL12}.

\begin{pro}\label{cor:5}
Let $X$ be a $\Q$-factorial projective variety and let $D_1, \dots, D_r$ be $\Q$-divisors on $X$. Assume that the ring $\mathfrak R=R(X;D_1, \dots, D_r)$ is finitely generated, and denote by $\pi\colon\Div_\R(X)\to N^1(X)_\R$ the natural projection. Then:
\begin{enumerate}
\item the cone $\Supp\mathfrak R\cap \pi^{-1}\bigl(\overline{\Mov} (X)\bigr)$ is rational polyhedral,
\item if $\Supp\mathfrak R$ contains an ample divisor, then the cone $\Supp\mathfrak R\cap \pi^{-1}\bigl(\Nef(X)\bigr)$ is rational polyhedral, and every element of this cone is semiample.
\end{enumerate} 
\end{pro}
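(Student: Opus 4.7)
The plan rests on the decomposition $\Supp\mathfrak R=\bigcup_i\mathcal{C}_i$ from Theorem \ref{thm:ELMNP}(3) into finitely many maximal rational polyhedral cones on which every $o_\Gamma$ is linear, together with Theorem \ref{thm:ELMNP}(4), which says that $o_\Gamma(D)=\frac{1}{k}\mult_\Gamma|kD|$ for a single $k$ and all integral $D\in\Supp\mathfrak R$. The basic observation I will use throughout is that on any fixed cone $\mathcal{C}_i$ only finitely many prime divisors $\Gamma\subset X$ can satisfy $o_\Gamma\not\equiv 0$: fixing rational generators $F_1,\dots,F_m$ of $\mathcal{C}_i$, each such $\Gamma$ must be a divisorial component of $\Bs|kF_j|$ for some $j$, and a base locus contains only finitely many divisorial components.

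For part (1) I invoke Nakayama's characterization of the movable cone via the $\sigma$-decomposition: an effective divisor $D$ has $\pi(D)\in\overline{\Mov}(X)$ if and only if $o_\Gamma(D)=0$ for every prime divisor $\Gamma\subset X$. For big $D$ this follows from Lemma \ref{lem:numericalBig} combined with the fact that a movable divisor has $\mult_\Gamma$ zero on every prime divisor of $X$; the non-big boundary case follows by continuity of $o_\Gamma$ on $\Supp\mathfrak R$, which holds since $o_\Gamma$ is piecewise linear on the rational polyhedral decomposition. On each $\mathcal{C}_i$ this cuts $\mathcal{C}_i\cap\pi^{-1}\bigl(\overline{\Mov}(X)\bigr)$ out by the finitely many linear equations $o_{\Gamma_1}=\cdots=o_{\Gamma_{N_i}}=0$ coming from the relevant prime divisors, producing a rational polyhedral subcone. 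Taking the union over $i$ proves (1).

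For part (2), the core step is that every $\Q$-divisor $D\in\Supp\mathfrak R$ with $\pi(D)$ nef is semiample. Using the given ample divisor $A\in\Supp\mathfrak R$, for each rational $\varepsilon>0$ the sum $D+\varepsilon A$ is ample, hence semiample, so $o_\Gamma(D+\varepsilon A)=0$ for every geometric valuation $\Gamma$ over $X$. Letting $\varepsilon\downarrow 0$ and using continuity of $o_\Gamma$ on $\Supp\mathfrak R$, I obtain $o_\Gamma(D)=0$ for all $\Gamma$; combined with the finite generation of $R(X,D)$ afforded by Lemma \ref{lem:3}(2), Lemma \ref{lem:ords} then gives that $D$ is semiample.

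The rational polyhedrality of $\Supp\mathfrak R\cap\pi^{-1}\bigl(\Nef(X)\bigr)$ now follows: on each $\mathcal{C}_i$ this intersection coincides with $\{D\in\mathcal{C}_i : o_\Gamma(D)=0\text{ for every geometric valuation }\Gamma\}$, and although there are infinitely many $\Gamma$, the functionals $o_\Gamma|_{\mathcal{C}_i}$ span a finite-dimensional subspace of the dual of $\mathcal{C}_i$, so finitely many already define the vanishing locus, and it is a rational polyhedral subcone. The union over $i$ is then rational polyhedral, with rational generators that are $\Q$-divisors in $\Supp\mathfrak R\cap\pi^{-1}\bigl(\Nef(X)\bigr)$, each semiample by the previous step, so every element is a nonnegative real combination of semiample $\Q$-divisors. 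The point I expect to be most delicate is the clean invocation of Nakayama's movable-cone characterization in (1) at non-big divisors on the boundary of $\Supp\mathfrak R$, where the ample perturbation trick of (2) is unavailable and one must rely instead on the continuity of $o_\Gamma$ and Lemma \ref{lem:numericalBig}.
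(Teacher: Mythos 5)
Your route is the paper's: both arguments run entirely on the chamber decomposition of Theorem \ref{thm:ELMNP}(3)--(4), describe membership in $\pi^{-1}\bigl(\overline{\Mov}(X)\bigr)$, resp.\ $\pi^{-1}\bigl(\Nef(X)\bigr)$, by vanishing of the asymptotic order functions $o_\Gamma$, and use linearity of $o_\Gamma$ on each chamber to get polyhedrality. Your part (2) is essentially the paper's proof: the paper observes that on a chamber whose relative interior contains an ample divisor the linear non-negative function $o_\Gamma$ vanishes identically, while you get $o_\Gamma(D)=0$ by moving along the segment from $D+A$ to $D$ inside $\Supp\mathfrak R$ and using continuity; in both cases Lemmas \ref{lem:3} and \ref{lem:ords} then give semiampleness, and the finiteness/face argument for polyhedrality is the same.

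The genuine soft spot is exactly the one you flagged in (1), and your continuity patch does not close it. The equivalence ``$\pi(D)\in\overline{\Mov}(X)$ if and only if $o_\Gamma(D)=0$ for every prime divisor $\Gamma\subset X$'' is Nakayama's characterisation for the \emph{numerical} functions $\sigma_\Gamma$, which agree with $o_\Gamma$ only on big divisors; on the non-big boundary the two can differ and the ``only if'' direction genuinely fails. Concretely, let $Y=\PS(\OO_E\oplus L)$ with $E$ an elliptic curve and $L\in\Pic^0(E)$ non-torsion, and let $C$ be the section with $C^2=0$; then $h^0(Y,kC)=1$ for every $k$, so $R(Y,C)$ is finitely generated and the class of $C$ is nef, hence movable on a surface, yet $o_C(C)=1$. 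With $r=1$, $D_1=C$, your identification would compute $\Supp\mathfrak R\cap\pi^{-1}\bigl(\overline{\Mov}(Y)\bigr)$ as $\{0\}$ instead of $\R_+C$, so the step fails even though the stated conclusion survives. Continuity of $o_\Gamma$ on $\Supp\mathfrak R$ cannot repair this: you would need to approximate $D$ \emph{inside} $\Supp\mathfrak R$ by big divisors with movable class, and part (1) carries no hypothesis guaranteeing such approximants exist (the support need not even meet the big cone). To be fair, the paper's own proof asserts the same identification with a bare appeal to Theorem \ref{thm:ELMNP}(4), so this is a weakness you share with, rather than add to, the source; the clean fix is to argue on the big part of $\Supp\mathfrak R$, where Lemma \ref{lem:numericalBig} makes the characterisation correct (and which is all that is used in the paper's applications), or to add the hypothesis that $\Supp\mathfrak R$ contains a big divisor.
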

\begin{figure}[htb]
\begin{center}
\includegraphics[width=0.4\textwidth]{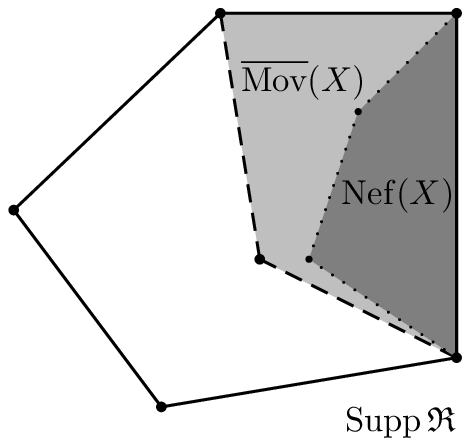}
\end{center}
\end{figure}
\begin{proof}
For every prime divisor $\Gamma$ on $X$, denote by $\mathcal C_\Gamma$ the span of the set of all $\Q$-divisors $D\in\Supp\mathfrak R$ such that $o_\Gamma(D)=0$. Note that $\Supp\mathfrak R\cap \pi^{-1}\bigl(\overline{\Mov} (X)\bigr)$ is the intersection of all $\mathcal C_\Gamma$ by Theorem \ref{thm:ELMNP}(4). 

Let $\Supp \mathfrak R=\bigcup\mcal{C}_i$ be a finite rational polyhedral subdivision as in Theorem~\ref{thm:ELMNP}. We may add all the faces of all the cones $\mcal C_i$ to the subdivision. To show (1), it is enough to prove that each $\mcal C_\Gamma$ is a union of some of $\mcal C_i$. But this follows once we notice that, if $\mcal C_\Gamma$ intersects the relative interior of some $\mcal C_\ell$, then $\mcal C_\ell\subseteq\mcal C_\Gamma$ since $o_\Gamma$ is a linear non-negative function on $\mcal C_\ell$. 

For (2), if the relative interior of $\mcal C_\ell$ contains an ample divisor, then $o_{\Gamma|\mcal{C}_\ell}\equiv0$ for every $\Gamma$ as above. Hence, every element of $\mcal{C}_\ell$ is semiample by Lemmas \ref{lem:3} and \ref{lem:ords}, and so $\mcal{C}_\ell\subseteq \Supp\mathfrak{R}\cap\pi^{-1} \bigl(\Nef(X)\bigr)$. Therefore, the cone $\Supp\mathfrak{R}\cap\pi^{-1} \bigl(\Nef(X)\bigr)$ is equal to the union of some $\mcal C_i$, which suffices.
\end{proof}

We also note the following crucial consequence of Theorem \ref{thm:ELMNP} \cite[Theorem 4.2]{KKL12}. It shows that the chamber decomposition as in Theorem \ref{thm:ELMNP} gives canonically birational contractions from our variety.

\begin{thm}\label{thm:decomposition}
Let $X$ be a $\Q$-factorial projective variety, and let $D_1,\dots,D_r$ be $\Q$-divisors on $X$. Assume that the ring $\mathfrak R=R(X;D_1,\dots,D_r)$ is finitely generated, and that $\Supp\mathfrak R$ contains a big divisor. Let $\Supp\mathfrak R= \bigcup \mcal C_i$ be a finite rational polyhedral decomposition as in Theorem \ref{thm:ELMNP}, and let $\mcal F_j$ be all the codimension $1$ faces of the cones $\mcal C_i$.
\begin{enumerate}
\item For each $i$, let $D_i$ be a Cartier divisor in the interior of $\mcal C_i$, and let $X_i=\Proj R(X,D_i)$. Then the variety $X_i$ and the birational map $\vphi_i\colon X\dashto X_i$ do not depend on the choice of $D_i$ (up to isomorphism). The map $\varphi_i$ is a contraction.
\item For each $j$, let $G_j$ be a Cartier divisor in the relative interior of $\mcal F_j$, and let $Y_j=\Proj R(X,G_j)$. If $\mcal F_j$ contains a big divisor, then the variety $Y_j$ and the birational map $\theta_j\colon X\dashto Y_j$ do not depend on the choice of $G_j$ (up to isomorphism). The map $\theta_j$ is a contraction.
\item If $\mcal F_j\subseteq\mathcal C_i$, then there is a birational morphism $\rho_{ij}\colon X_i\to Y_j$ such that the diagram 
\[
\xymatrix{ 
X \ar@{-->}[rr]^{\varphi_i} \ar@{-->}[dr]_{\theta_j} & \quad & X_i
  \ar[dl]^{\rho_{ij}}\\
\quad & Y_j & \quad
}\]
commutes.
\end{enumerate}
\end{thm}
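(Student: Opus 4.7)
The plan is to exploit Theorem \ref{thm:ELMNP} in two ways on each chamber $\mcal C_i$: the linearity of every $o_\Gamma$ pins down the fixed part of each linear system $|mE|$ (for $E \in \mcal C_i$ and $m$ sufficiently divisible) as a linear function of $E$, while the uniform integer $k$ of part~(4) makes this effective simultaneously for all such $E$. Throughout, Lemma \ref{lem:3}(2) allows me to freely form auxiliary finitely generated section rings from finitely many divisors in $\Supp \mathfrak R$, so that Theorem \ref{thm:ELMNP} and Lemma \ref{lem:ords} can be reapplied to these subrings.

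For (1), let $D, D' \in \inte(\mcal C_i)$ be Cartier. By Lemma \ref{lem:3}(2), the bigraded ring $R(X;D,D')$ is finitely generated with support $\R_+D + \R_+D'$; since $o_\Gamma$ is linear on all of $\mcal C_i$, the induced chamber decomposition is trivial, consisting of this single cone. I choose $k$ divisible enough that Theorem \ref{thm:ELMNP}(4) applies simultaneously to $kD$, $kD'$, and $k(D+D')$, at which point linearity of $o_\Gamma$ gives
$$\Fix|k(D+D')| = \Fix|kD| + \Fix|kD'|,$$
and together with the multiplication $H^0(X,kD) \otimes H^0(X,kD') \to H^0(X,k(D+D'))$ this forces the corresponding identity of mobile parts,
$$\Mob|k(D+D')| = \Mob|kD| + \Mob|kD'|.$$
Since a complete linear system defines the same rational map as its mobile part, the three rational maps $\varphi_{|kD|}$, $\varphi_{|kD'|}$ and $\varphi_{|k(D+D')|}$ contract precisely the same prime divisors on $X$, namely those $\Gamma$ with $o_\Gamma > 0$ on $\mcal C_i$, and the natural ring homomorphisms among $R(X,kD)$, $R(X,kD')$, $R(X,k(D+D'))$ induce identifications of their $\Proj$'s compatible with the birational maps from $X$. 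This shows $X_i$ and $\varphi_i$ are well-defined up to isomorphism, and $\varphi_i$ is a contraction by the Proj construction itself.

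For (2), the same argument applies verbatim inside $\relint(\mcal F_j)$, with the bigness hypothesis ensuring $\dim Y_j = \dim X$ and hence that $\theta_j$ is birational. For (3), choose Cartier divisors $D \in \inte(\mcal C_i)$ and $G \in \relint(\mcal F_j)$. Since $\mcal F_j \subseteq \overline{\mcal C_i}$ and $\inte(\mcal C_i)$ is open and convex, $D + nG \in \inte(\mcal C_i)$ for every $n \geq 1$, so part~(1) yields $\Proj R(X,D+nG) \simeq X_i$. A fixed nonzero section of a large multiple of $D$ defines, via multiplication, graded ring homomorphisms between suitable Veronese subrings of $R(X,G)$ and $R(X,D+nG)$, inducing the desired morphism $\rho_{ij}\colon X_i \to Y_j$; commutativity of the diagram is immediate from this construction.

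The main obstacle is the identification of the three $\Proj$'s in step~(1). The additive decomposition of mobile parts gives only morphisms in one direction between the Projs, and upgrading these to isomorphisms requires showing that the exceptional set of each of the three rational maps consists of exactly the same set of prime divisors on $X$, namely $\{\Gamma \subset X : o_\Gamma > 0 \text{ on } \mcal C_i\}$. This is the crucial use of the linearity of $o_\Gamma$ on each chamber guaranteed by Theorem \ref{thm:ELMNP}(3), and it is the same ingredient that underlies the analogous identifications in parts~(2) and~(3).
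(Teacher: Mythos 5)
Your additivity step is fine: Theorem \ref{thm:ELMNP}(3)--(4) does give $\Fix|k(D+D')|=\Fix|kD|+\Fix|kD'|$ and hence additivity of mobile parts on a chamber. The genuine gap is in the identification step that follows. From the fact that $\varphi_{|kD|}$, $\varphi_{|kD'|}$ and $\varphi_{|k(D+D')|}$ contract the same prime divisors of $X$ you cannot conclude that the three $\Proj$'s coincide: two birational contractions can contract exactly the same divisors (for instance none at all) and still have non-isomorphic targets -- this is precisely the situation of small modifications such as flops. Nor are there ``natural ring homomorphisms'' among $R(X,kD)$, $R(X,kD')$ and $R(X,k(D+D'))$: multiplication by a fixed section, which you also invoke to build $\rho_{ij}$ in (3), is not a graded ring homomorphism (multiplicativity fails by a factor of that section), so it does not induce a morphism of $\Proj$'s. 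The paper's proof avoids both problems by first extracting from Theorem \ref{thm:ELMNP} a resolution $f\colon\widetilde X\to X$ and an integer $d$ such that $M_i=\Mob f^*(dD_i)$ is basepoint free for every Cartier divisor in $\Supp\mathfrak R$; the linear relation $r_iM_i=r_i'M_i'+r_i''M_i''$ among these nef divisors then shows that the induced morphisms $\psi_i,\psi_i'$ from $\widetilde X$ contract the same \emph{curves}, not merely the same divisors, and that is what pins down $X_i$ up to isomorphism; the same curve comparison (a curve with $M_i\cdot C=0$ also has $M_j\cdot C=0$ for the divisor class on a face of $\mcal C_i$) produces $\rho_{ij}$ in (3).

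A second gap is the claim that $\varphi_i$ is a contraction ``by the Proj construction itself''. That $\varphi_i^{-1}$ contracts no divisors is not automatic; in the paper it is exactly the content of the cited result of Reid \cite[Proposition 1.2]{Rei80}, which says that $\Fix|f^*(dD_i)|$ and every $f$-exceptional divisor are contracted by the semiample fibration $\psi_i$. Working only with $\Mob|kD|$ on $X$, whose base locus can still be nonempty in codimension at least $2$, you have no control over this, nor over the exceptional valuations over $X$ that Theorem \ref{thm:ELMNP} is designed to handle; this is why the argument has to be run on a resolution where the mobile parts become basepoint free.
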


\begin{proof}
I will only show (1) and (3), as the proof of (2) is analogous to that of (1). 

Theorem \ref{thm:ELMNP} implies that we can find a resolution $f\colon \widetilde{X} \to X$ and a positive integer $d$ such that $\Mob f^*(dD)$ is basepoint free for every Cartier divisor $D\in\Supp\mathfrak R$. Denote $M_i= \Mob f^*(dD_i)$. Then we have the induced birational morphism $\psi_i\colon \widetilde X \to X_i$, which is just the Iitaka fibration associated to $M_i$. A result of Reid \cite[Proposition 1.2]{Rei80} shows that the divisor $\Fix |f^*(dD_i)|$ (and also any $f$-exceptional divisor) is contracted by $\psi_i$ -- in other words, $\varphi_i$ is a contraction.

Let us show that the definition of $\varphi_i$ does not depend on the choice of $D_i$. Indeed, pick any other Cartier divisor $D_i'$ in the interior of $\mcal C_i$, and let $\psi_i'\colon\widetilde X\to \Proj R(X,D_i')$ be the corresponding map. There exists a Cartier divisor $D_i''$ in the interior of $\mcal{C}_i$, together with positive integers $r_i,r_i',r_i''$ such that 
$$r_iD_i= r_i' D_i'+r_i''D_i''.$$
Denoting $M_i'= \Mob f^*(dD_i')$ and $M_i''= \Mob f^*(dD_i'')$, then we have 
\begin{equation}\label{eq:mobiles}
r_iM_i= r_i' M_i'+r_i''M_i''
\end{equation} 
(since all functions $o_\Gamma$ are linear on $\mcal C_i$), and the divisors $M_i,M_i',M_i''$ are basepoint free. For any curve $C$ on $\widetilde X$ contracted by $\psi_i$ we have $M_i\cdot C=0$, hence equation \eqref{eq:mobiles} implies $M_i' \cdot C=0$, and so $C$ is contracted by $\psi_i'$. Reversing the roles of $D_i$ and $D_i'$, we obtain that $\psi_i$ and $\psi_i'$ contract the same curves, therefore they are the same map up to isomorphism.

The same method proves (3).
\end{proof}

I finish this section with a simple consequence of Lemma \ref{lem:numericalBig} and Theorem \ref{thm:ELMNP}, which will be crucial in Section \ref{sec:mmpbeyond}.

\begin{lem}\label{lem:equal proj}
Let $X$ be a $\Q$-factorial projective variety, and let $D_1$ and $D_2$ be big $\Q$-divisors such that $D_1\equiv D_2$. Assume that the rings $R(X,D_i)$ are finitely generated, and consider the maps $\vphi_i\colon X\dashto \Proj R(X,D_i)$.

Then there exists an isomorphism $\eta\colon \Proj R(X,D_1)\lto \Proj R(X,D_2)$ such that $\vphi_2=\eta\circ\vphi_1$.
\end{lem}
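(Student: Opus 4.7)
My plan is to reduce the statement to comparing two basepoint free divisors on a common resolution, and then to reuse the argument from the proof of Theorem \ref{thm:decomposition}. First, apply Theorem \ref{thm:ELMNP}(4) to each of the rings $R(X,D_1)$ and $R(X,D_2)$, and choose a single log resolution $f\colon \tilde X \to X$ together with a positive integer $k$ such that for $i=1,2$ the mobile part $M_i := \Mob f^*(kD_i)$ is basepoint free on $\tilde X$, and such that $\mult_\Gamma \Fix f^*(kD_i) = o_\Gamma(kD_i)$ for every prime divisor $\Gamma$ on $\tilde X$. Write $f^*(kD_i) \sim M_i + F_i$ with $F_i := \Fix f^*(kD_i)$.

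Next, since $D_1 \equiv D_2$ are both big, Lemma \ref{lem:numericalBig} together with the degree-one homogeneity of $o_\Gamma$ gives $o_\Gamma(kD_1) = o_\Gamma(kD_2)$ for every geometric valuation $\Gamma$, in particular for every prime divisor of $\tilde X$. Hence $\mult_\Gamma F_1 = \mult_\Gamma F_2$ on every such $\Gamma$, which forces the equality of effective divisors $F_1 = F_2$ on $\tilde X$. Combined with the evident numerical equivalence $f^*(kD_1) \equiv f^*(kD_2)$, this yields $M_1 \equiv M_2$ in $N^1(\tilde X)_\R$.

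Finally, let $\psi_i\colon \tilde X \to X_i = \Proj R(X,D_i)$ be the Iitaka morphism associated to the basepoint free divisor $M_i$; as in the proof of Theorem \ref{thm:decomposition}(1), $\vphi_i = \psi_i \circ f^{-1}$ as rational maps. A curve $C$ on $\tilde X$ is contracted by $\psi_i$ iff $M_i \cdot C = 0$, so the numerical equivalence $M_1 \equiv M_2$ implies that $\psi_1$ and $\psi_2$ contract precisely the same curves. Running the last paragraph of the proof of Theorem \ref{thm:decomposition}(1) verbatim then produces an isomorphism $\eta\colon X_1 \to X_2$ with $\eta \circ \psi_1 = \psi_2$, and composing with $f^{-1}$ gives $\vphi_2 = \eta \circ \vphi_1$. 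The main obstacle is the upgrade from the numerical equivalence $D_1 \equiv D_2$ to the honest equality of divisors $F_1 = F_2$ on $\tilde X$: this is precisely where Lemma \ref{lem:numericalBig} is combined with Theorem \ref{thm:ELMNP}(4), and it requires choosing $k$ and $f$ simultaneously for both section rings, which is arranged by taking $k$ to be a common multiple and $f$ to dominate both resolutions produced by that theorem.
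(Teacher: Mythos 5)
Your proposal is correct and follows essentially the same route as the paper: pass to a common resolution with a uniform $k$ via Theorem \ref{thm:ELMNP}, use Lemma \ref{lem:numericalBig} to identify the fixed parts and deduce $\Mob f^*(kD_1)\equiv\Mob f^*(kD_2)$, and conclude that the two semiample fibrations contract the same curves, exactly as in the proof of Theorem \ref{thm:decomposition}(1). You merely spell out the intermediate steps that the paper compresses into two sentences.
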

\begin{proof}
By passing to a resolution and by Theorem \ref{thm:ELMNP}, we may assume that there is a positive integer $k$ such that $\Mob(kD_i)$ are basepoint free, and that each $\varphi_i$ is given by the linear system $|\Mob(kD_i)|$. By Lemma \ref{lem:numericalBig} we have $\Mob(kD_1)\equiv \Mob(kD_2)$, hence $\vphi_1$ and $\vphi_2$ contract the same curves.
\end{proof}
\section{Picture 1: Classification}\label{sec:picture1}

I review briefly the ``classical" Minimal Model Program, concentrating on parts which are important in what follows. There are many well-written surveys and books on the topic, and if needed, you can consult \cite{KM98} and references therein. It is important to point out that many concepts which are related or grew out of the MMP, and that we consider \emph{natural} or \emph{given} because they fit beautifully into many corners of algebraic geometry (such as nefness, the canonical ring, minimal and canonical models and so on), took a long time to conceive. In order to fully appreciate this formative process, I urge you to read the wonderful semi-autobiographical survey \cite{Rei00}, and also \cite[\S 9]{Mor87}. 

For the sake of simplicity and clarity, I state all the results for smooth varieties, but note that with minor changes they hold for pairs with klt singularities. 

So say you have at hand a smooth $n$-dimensional projective variety $X$. As mentioned in the introduction, it is reasonable to concentrate on the canonical divisor $K_X$ as the central object of our study. On the other hand, having ample divisors on a projective variety $X$ is extremely important: they give embeddings of $X$ into some projective space, and their cohomological and numerical properties are as nice as you can hope for.

Assume that $K_X$ is pseudo-effective. Then, a reasonable question to pose is: 
\begin{center}
\emph{Is there a birational map $f\colon X\dashrightarrow Y$ such that the divisor $f_*K_X$ is ample?}
\end{center}
Here the map $f$ should not be just any birational map, but a birational contraction -- in other words, $f^{-1}$ should not contract divisors. This is an important condition since the variety $Y$ should be in almost every way simpler than $X$; in particular, some of its main invariants, such as the Picard number, should not increase. Likewise, we would like to have $K_Y=f_*K_X$, and this will almost never happen if $f$ extracts divisors (take, for instance, an inverse of almost any blowup).

Further, we impose that $f$ should preserve sections of all positive multiples of $K_X$. This is also important, since global sections are something we definitely want to keep track of, if we want the divisor $K_Y=f_*K_X$ to bear any connection with $K_X$. Another way to state this is as follows. Consider the \emph{canonical ring} of $X$:
$$R(X,K_X)=\bigoplus_{m\in\N}H^0(X,mK_X).$$
Then we require that $f$ induces an isomorphism between $R(X,K_X)$ and $R(Y,K_Y)$. Apart from the relation to Zariski's conjecture that was mentioned in the introduction, this is also fundamental in the construction of the moduli space of canonically polarised varieties; for an introduction to this beautiful topic, upon which I do not touch any more in these notes, see \cite[Part III]{HK10}.

We immediately see that the answer to the question above is in general ``no" -- the condition would imply that $K_X$ is a big divisor. Nevertheless, we can settle for something weaker, but still sufficient for our purposes: we require that the divisor $K_Y$ is \emph{semiample}. This then still produces an Iitaka fibration $g\colon Y\to Z$ and an ample divisor $A$ such that $K_Y=g^*A$, and the composite map $X\dashrightarrow Z$, which is now not necessarily birational, gives an isomorphism of section rings $R(X,K_X)$ and $R(Z,A)$.

Historically, by the influence of the classification of surfaces on the way we think about higher dimensional classification, this splits into two problems: finding a birational map $f\colon X\dashrightarrow Y$ such that the divisor $K_Y=f_*K_X$ is nef; and then proving that the nef divisor $K_Y$ is semiample. This last part -- the \emph{Abundance conjecture} -- is one of main open problems in higher dimensional geometry, in dimensions at least $4$. We know it holds in dimensions up to $3$, and when the canonical divisor is big, but very little is known in general.

Thus, hopefully by now it is clear that the main classification criterion is whether the canonical divisor $K_X$ is nef. If $K_X$ is nef, we are done, at least with the first part of the programme above. Life gets much tougher, but also much more interesting when the answer is \emph{no}.
\subsection*{The Cone and Contraction theorems}
Indeed, let $\NEb(X)\subseteq N_1(X)_\R$ denote the closure of the cone spanned by effective curves; note that the nef cone $\Nef(X)$ is dual to $\NEb(X)$ by Nakai's criterion, with respect to the intersection pairing. Since $K_X$ is not nef, the hyperplane 
$$K_X^\perp=\{C\in N_1(X)_\R\mid K_X\cdot C=0\}\subseteq N_1(X)_\R$$
must cut the cone $\NEb(X)$ into two parts; let us denote the two pieces by $\NEb(X)_{K_X\geq0}$ and $\NEb(X)_{K_X<0}$. Then the celebrated Cone theorem of Mori tells that the negative part $\NEb(X)_{K_X<0}$ is \emph{locally rational polyhedral}. More precisely:

\begin{thm}\label{thm:cone}
Let $X$ be a smooth projective variety. Then there exist countably many extremal rays $R_i$ of the cone $\NEb(X)$ such that $K_X\cdot R_i<0$ and $$\NEb(X)=\NEb(X)_{K_X\geq0}+\sum R_i.$$

Moreover, for every ample $\Q$-divisor $H$ on $X$, there exist finitely many such rays $R_i'$ with 
$$\NEb(X)=\NEb(X)_{K_X+H\geq0}+\sum R_i'.$$ 
In particular, the rays $R_i$ are discrete in the half-space $\NEb(X)_{K_X<0}$.
\end{thm}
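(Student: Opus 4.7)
The plan is to reduce the Cone theorem to the finite generation of adjoint rings (Theorem \ref{thmA}) via Proposition \ref{cor:5}(2). The main content lies in the second ($H$-dependent) finite statement, from which the countable statement and the discreteness claim follow by an elementary exhaustion argument.

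For the finite version, fix an ample $\Q$-divisor $H$. I would construct a finitely generated adjoint ring $\mathfrak{R}$ whose support, viewed in $N^1(X)_\R$ under the projection $\pi$, covers an open neighborhood of the class $[K_X+H]$ and contains an ample class. Concretely, choose effective ample $\Q$-divisors $\Delta_0,\Delta_1,\dots,\Delta_\rho$ (where $\rho=\dim N^1(X)_\R+1$) such that each $(X,\Delta_i)$ is klt — taken as general ample representatives with small coefficients on simple normal crossing support — with $K_X+\Delta_0$ ample and with $[\Delta_1],\dots,[\Delta_\rho]$ forming a rational simplex in $N^1(X)_\R$ containing $[H]$ in its interior. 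Theorem \ref{thmA} applies and gives finite generation of $\mathfrak{R}=R(X;K_X+\Delta_0,K_X+\Delta_1,\dots,K_X+\Delta_\rho)$; Proposition \ref{cor:5}(2) then yields that $\Supp\mathfrak{R}\cap\pi^{-1}(\Nef(X))$ is a rational polyhedral cone.

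Projecting to $N^1(X)_\R$, it follows that $\Nef(X)$ is cut out, in an open neighborhood of $[K_X+H]$, by finitely many rational linear inequalities. Each such inequality is the vanishing condition of a rational ray in $\NEb(X)$, and the rays whose defining hyperplanes pass through the half-space $(K_X+H)<0$ provide the desired extremal rays $R_1',\dots,R_N'$. One then checks by convex duality that $\NEb(X)=\NEb(X)_{K_X+H\geq0}+\sum R_i'$: any class $C\in\NEb(X)$ with $(K_X+H)\cdot C<0$ decomposes as a nonnegative combination of the $R_i'$ plus a class in the nonnegative slab, because the $R_i'$ generate precisely those facets of $\Nef(X)$ that separate $[K_X+H]$ from the nef cone. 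Each $R_i'$ automatically satisfies $K_X\cdot R_i'<0$, since $(K_X+H)\cdot R_i'<0$ and $H\cdot R_i'>0$ by ampleness of $H$.

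For the countable statement, writing $\NEb(X)_{K_X<0}=\bigcup_{n\geq1}\NEb(X)_{K_X+\frac{1}{n}H\leq0}$ for any fixed ample $H$, the finite version applied to $\frac{1}{n}H$ at each $n$ produces countably many rays whose union covers the $K_X$-negative portion of $\NEb(X)$. Discreteness in the open half-space $K_X<0$ follows because any accumulation ray $R$ with $K_X\cdot R<0$ would satisfy $(K_X+\frac{1}{n}H)\cdot R<0$ for some $n$, forcing infinitely many distinct $R_i$'s into the same $\frac{1}{n}H$-slab and contradicting the finiteness already established. The main technical obstacle I anticipate is the convex-duality argument turning local rational polyhedrality of $\Nef(X)$ near $[K_X+H]$ into an honest enumeration of the finitely many $(K_X+H)$-negative extremal rays of $\NEb(X)$; the arrangement of the $\Delta_i$'s to force $[H]$ into the interior of the convex hull of the $[\Delta_i]$ is mostly a bookkeeping exercise.
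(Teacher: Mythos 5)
Your overall strategy (reduce everything to Theorem \ref{thmA} via Proposition \ref{cor:5} and then dualise) is exactly the paper's strategy, but your construction anchors the finitely generated adjoint ring at the wrong place, and the proof breaks precisely at the step you yourself flag as the ``main technical obstacle''. The cone $\pi(\Supp\mathfrak R)$ you produce contains only a thin conical neighbourhood of the ray $\R_+[K_X+H]$ together with the single ample ray $\R_+[K_X+\Delta_0]$. What the duality argument actually requires is control of the faces of $\Nef(X)$ orthogonal to the $(K_X+H)$-negative extremal rays: if $z$ is such a ray, its dual face $z^\perp\cap\Nef(X)$ lies on the part of $\partial\Nef(X)$ visible from $[K_X+H]$, and this visible region is in general a large portion of the nef boundary, far from the ray through $[K_X+H]$ (which is typically not even pseudo-effective) and far from your one chosen ample class. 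Rational polyhedrality of $\Nef(X)\cap\pi(\Supp\mathfrak R)$ only detects those extremal rays whose dual faces happen to meet your thin cone, so the assertion that ``the $R_i'$ generate precisely those facets of $\Nef(X)$ that separate $[K_X+H]$ from the nef cone'' is unjustified; with the rays you actually detect, the decomposition $\NEb(X)=\NEb(X)_{K_X+H\geq0}+\sum R_i'$ fails as soon as some $(K_X+H)$-negative extremal ray has dual face disjoint from your cone, since such a ray is extremal and cannot lie in the right-hand side (think of a blow-up of $\PS^2$ in many points: the facets dual to the various $(-1)$-curves are spread all over $\partial\Nef(X)$, and a thin cone from one ample class towards $[K_X+H]$ meets only a few of them). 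Two smaller points: taken literally, ``$\Nef(X)$ is cut out in an open neighborhood of $[K_X+H]$ by finitely many rational inequalities'' is vacuous, as such a neighbourhood usually misses $\Nef(X)$ entirely; and passing from statements about $\Supp\mathfrak R$ to statements about all nef classes in the cone needs Theorem \ref{thm:ELMNP}(2), which you never invoke.

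The paper circumvents this by anchoring the ring along the nef boundary rather than along $K_X+H$: in Theorem \ref{thm:ConeRevisited} one takes a compact subset $F$ of the relative interior of the visible boundary $V$ and rational classes $w_1,\dots,w_m$ close to $F$; the key geometric input is that the line through $\kappa=[K_X]$ and such a $w_j$ meets the ample cone, so $w_j$ is the class of $t_j(K_X+A_j)$ with $A_j$ ample and $(X,A_j)$ klt, and Theorem \ref{thmA}, Theorem \ref{thm:ELMNP}(2) and Proposition \ref{cor:5} give rational polyhedrality of $\Nef(X)\cap\sum\R_+w_j$, i.e.\ local rational polyhedrality of $\Nef(X)$ along $V$. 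Lemma \ref{lem:implicationCone} then dualises, using that every class in $\relint V$ is of the form $t\kappa+(1-t)\alpha$ with $\alpha$ ample, so all supporting curve classes along $\relint V$ are $K_X$-negative; for the statement with $H$ one also checks that the dual faces of the $(K_X+H)$-negative extremal rays lie in a compact piece of $\relint V$ (this is where ampleness of $H$ enters). Your argument can be repaired by the same device: use the standard fact that a nef class supporting a $K_X$-negative extremal face is proportional to $K_X+(\text{ample})$, and choose the boundaries $\Delta_i$ so that the classes $[K_X+\Delta_i]$ surround that compact piece of the visible boundary rather than the class $[K_X+H]$. Your final reduction of the countable statement and of discreteness to the finite statement with $H/n$ is fine and matches the paper's remark that the second statement implies the first.
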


Note that in the theorem, the second statement implies the first, by letting $H\to0$. This is the standard formulation, and the proof can be found in any treatise of the subject. A suitable formulation for klt pairs was proved by Kawamata, Koll\'ar and others. 

There is an additional statement that we can contract any of the extremal rays $R_i$ -- this is the Contraction theorem of Kawamata and Shokurov.

\begin{thm}\label{thm:contraction}
With the notation from Theorem \ref{thm:cone}, fix any of the rays $R=R_i$. Then there exists a morphism with connected fibres 
$$\cont_R\colon X\to Y$$
to a normal projective variety $Y$ such that a curve is contracted by $\cont_R$ if and only if its class lies in $R$.
\end{thm}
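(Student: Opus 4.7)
The strategy is to realise $\cont_R$ as the morphism associated with a semiample nef $\Q$-divisor $L$ of adjoint form that ``supports'' the face $R$, invoking Theorem~\ref{thmA} and Proposition~\ref{cor:5} to deduce semiampleness.

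First I would use Theorem~\ref{thm:cone} together with elementary convex geometry to manufacture a rational nef $\Q$-divisor of the form $L=K_X+\Delta$ such that $(X,\Delta)$ is klt, $\Delta$ is big, and $L^\perp\cap\NEb(X)=R$. Concretely, pick an ample $\Q$-divisor $H_0$ for which $R$ is one of the finitely many $(K_X+H_0)$-negative extremal rays. Since the $K_X$-negative extremal rays are discrete (Theorem~\ref{thm:cone}), a small perturbation and rescaling of $H_0$ inside the ample cone produces an ample $\Q$-divisor $H$ with $(K_X+H)\cdot R=0$ and $(K_X+H)\cdot R_i>0$ on every other $K_X$-negative extremal ray $R_i$; the divisor $L=K_X+H$ is then nef with $L^\perp\cap\NEb(X)=R$. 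Finally, replace $H$ by a general effective $\Q$-linearly equivalent divisor $\Delta$ with coefficient $1/m$ for $m\gg 0$, so that $(X,\Delta)$ is klt (using that $X$ is smooth) and $\Delta$ is big.

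To upgrade $L$ from nef to semiample, I would enlarge the boundary by a small ample piece. Choose a small ample $\Q$-divisor $H'$ such that $(X,\Delta+H')$ is still klt, and set $\Delta_1=\Delta$, $\Delta_2=\Delta+H'$, so that $K_X+\Delta_1=L$ is nef while $K_X+\Delta_2=L+H'$ is ample. Both $\Delta_i$ are big and both pairs are klt, so Theorem~\ref{thmA} gives that the adjoint ring $\mathfrak R=R(X;K_X+\Delta_1,K_X+\Delta_2)$ is finitely generated. Since $\Supp\mathfrak R$ contains the ample (hence big) divisor $K_X+\Delta_2$, Theorem~\ref{thm:ELMNP}(2) implies that every pseudo-effective divisor in the cone $\R_+(K_X+\Delta_1)+\R_+(K_X+\Delta_2)$ is effective; in particular the nef divisor $L$ lies in $\Supp\mathfrak R$. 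Proposition~\ref{cor:5}(2) then yields that every element of $\Supp\mathfrak R\cap\pi^{-1}\bigl(\Nef(X)\bigr)$ is semiample, and so $L$ is semiample.

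Finally, I would define $\cont_R\colon X\to Y$ as the morphism associated with the basepoint free linear system $|mL|$ for $m$ sufficiently divisible, composed if necessary with the Stein factorisation to achieve connected fibres; then $Y$ is a normal projective variety, and by construction a curve $C\subset X$ is contracted by $\cont_R$ if and only if $L\cdot C=0$, which is equivalent to $[C]\in L^\perp\cap\NEb(X)=R$. The finite generation of $R(X,L)$ needed here follows from Lemma~\ref{lem:3}(2) applied to $\mathfrak R$. The main obstacle in this plan is the very first step: producing a rational nef adjoint divisor $L=K_X+\Delta$ whose kernel on $\NEb(X)$ is exactly $R$. The Cone theorem supplies the discreteness of $K_X$-negative extremal rays and the rationality of $R$, but assembling these into a nef $\Q$-divisor with precisely the prescribed vanishing locus requires a careful convex-geometric perturbation inside the ample cone.
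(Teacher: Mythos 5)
Your core mechanism is the same as the paper's: manufacture a nef divisor of adjoint shape whose null face on $\NEb(X)$ is exactly $R$, deduce semiampleness from Theorem \ref{thmA} together with Theorem \ref{thm:ELMNP}(2) and Proposition \ref{cor:5}(2), and define $\cont_R$ as the associated fibration (Stein factorisation is in fact unnecessary: for $m$ sufficiently divisible the morphism onto $\Proj R(X,L)$ already has connected fibres, and a curve is contracted precisely when $L\cdot C=0$). This is exactly how the paper argues, except that it packages the semiampleness statement as Theorem \ref{thm:ConeRevisited} and deduces the Contraction theorem in Lemma \ref{lem:implicationCone}, where $\cont_R$ is the Iitaka fibration of any line bundle in the relative interior of the face of $\Nef(X)$ orthogonal to $R$.

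The one place where you deviate, and which you rightly flag as the main obstacle, is the production of a rational nef $L=K_X+H$ with $L^\perp\cap\NEb(X)=R$, and ``a small perturbation and rescaling of $H_0$'' hides two real issues. First, rescaling alone can fail: increasing $t$ until $(K_X+tH_0)\cdot R=0$ keeps $K_X+tH_0$ nonnegative on $\NEb(X)_{K_X+H_0\geq0}$, but it may stay negative on another $(K_X+H_0)$-negative extremal ray whose critical value of $t$ is larger, so one must first tilt $H_0$ so that $R$ becomes null first; the clean route is a separation argument -- the cone $W=\NEb(X)_{K_X+H_0\geq0}+\sum_{R_i'\neq R}R_i'$ is closed and meets $R$ only at $0$ by extremality, so there is a real class $\lambda$ with $\lambda\cdot R=0$ and $\lambda>0$ on $W\setminus\{0\}$, and then $s\lambda-K_X$ is ample for $s\gg0$ by Kleiman's criterion, giving a real supporting class of adjoint shape. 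Second, to replace this real class by a rational one you must move inside the hyperplane orthogonal to a generator of $R$, i.e.\ you need $R$ to be a \emph{rational} ray. That is part of the classical Cone theorem (rays generated by rational curves), but it is precisely the addendum that the paper's Theorem \ref{thm:cone} deliberately omits; the paper instead extracts the required rationality from finite generation itself, via Theorem \ref{thm:ConeRevisited}(1) (that is, Proposition \ref{cor:5}(1)), which makes the relevant face of $\Nef(X)$ locally rational polyhedral and hence guarantees Cartier classes of the form $t\kappa+(1-t)\alpha$ in its relative interior. So either import the rational-curve generator from the classical statement, or close this step the paper's way; once a rational supporting class is in hand, the remainder of your argument (the two-divisor adjoint ring, semiampleness, and the identification of the contracted curves) is correct and coincides with the paper's.
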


The importance of the Contraction theorem is two-fold. First, it is clear that such a contraction has to be defined by a basepoint free divisor $L$ with $L\cdot R=0$; in general, it is very difficult to show the existence of a single non-trivial non-ample basepoint free divisor on a variety -- the conclusion that there are many of them is clearly astonishing. 

Second, we want to eventually end up with a variety on which the canonical divisor is nef, i.e.\ it has no extremal rays as above. We therefore hope that by contracting some of the rays we can make the situation better. We will see below that this is not necessarily the case, at least not immediately. However, I will argue that life indeed gets better, at least if we choose carefully \emph{which} rays to contract.

I next state the result which contains both the Cone and Contraction theorems. The new statement lives in $N^1(X)_\R$ and, by duality, involves the nef cone. This formulation has been known for a long time, and origins go back at least to \cite{Kaw88}. However, it has only recently been realised \cite[Theorem 4.2]{CoL10} that this statement is much easier to prove than Theorems \ref{thm:cone} and \ref{thm:contraction}, once we have right tools at hand.

\begin{thm}\label{thm:ConeRevisited}
Let $X$ be a smooth projective variety such that $K_X$ is not nef. 
Let $V$ be the \emph{visible boundary} of $\Nef(X)$ from the class $\kappa=[K_X]\in N^1 (X)_\R$:
$$V=\big\{\delta\in\partial\Nef(X)\mid [\kappa,\delta]\cap\Nef(X)=\{\delta\}\big\}.$$
Then:
\begin{enumerate}
\item every compact subset $F$ which belongs to the relative interior of $V$, is contained in a union of finitely many supporting rational hyperplanes,
\item every Cartier divisor on $X$ whose class belongs to the relative interior of $V$ is semiample.
\end{enumerate}
\end{thm}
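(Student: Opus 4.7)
My plan is to derive Theorem \ref{thm:ConeRevisited} from Theorem \ref{thmA} together with Proposition \ref{cor:5}(2). Since $F$ is compact and contained in the relative interior of $V$, it suffices by a finite covering argument to establish the statement locally: for each $\delta_0 \in \relint V$, there is an open neighborhood $U \ni \delta_0$ in $N^1(X)_\R$ such that $U \cap \partial \Nef(X)$ is contained in the union of finitely many rational supporting hyperplanes of $\Nef(X)$, and every Cartier class in $U \cap \partial \Nef(X)$ is semiample.

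\medskip

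The key step is to produce, at each such $\delta_0$, finitely many big $\Q$-divisors $\Delta_0, \Delta_1, \dots, \Delta_s$ on $X$ with $(X, \Delta_i)$ klt, such that the cone $\mcal C := \sum_i \R_+[K_X + \Delta_i]$ is a full-dimensional rational polyhedral cone in $N^1(X)_\R$ containing $\delta_0$ in its interior and an ample class. Concretely, fix an ample $\Q$-divisor $A$, pick classes $\delta^{(1)}, \dots, \delta^{(s)} \in \relint V$ close to $\delta_0$ whose convex hull covers a neighborhood of $\delta_0$ in $\partial\Nef(X)$, together with an ample perturbation $\delta^{(0)} := \delta_0 + \varepsilon[A]$, and choose a large positive integer $c$ and effective big $\Q$-divisors $\Delta_i$ with $(X, \Delta_i)$ klt representing the numerical classes $c\delta^{(i)} - [K_X]$.

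\medskip

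Once such $\Delta_i$ are in hand, Theorem \ref{thmA} implies that $\mathfrak R := R(X; K_X + \Delta_0, \dots, K_X + \Delta_s)$ is finitely generated, and by construction $\Supp \mathfrak R = \mcal C$ contains the ample class $c\delta^{(0)}$. Proposition \ref{cor:5}(2) then yields that $\mcal C \cap \pi^{-1}(\Nef(X))$ is a rational polyhedral cone on which every class is semiample. Since $\delta_0$ lies in the interior of $\mcal C$, a small neighborhood $U \ni \delta_0$ is contained in the interior of $\mcal C$, so $U \cap \pi^{-1}(\Nef(X)) = U \cap \bigl(\mcal C \cap \pi^{-1}(\Nef(X))\bigr)$ is cut out of $U$ by finitely many rational linear inequalities. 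The boundary $U \cap \partial \Nef(X)$ thus lies in the union of the corresponding rational hyperplanes, which support $\Nef(X)$ along their facets, yielding (1); and every Cartier class there is semiample by Proposition \ref{cor:5}(2), yielding (2).

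\medskip

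The main obstacle is the construction of the $\Delta_i$: bigness of the prescribed class $c\delta^{(i)} - [K_X]$ is delicate, since a priori $\delta^{(i)} - [K_X]$ need not even be pseudo-effective (for instance on a blowup of a minimal surface of general type, where $K_X$ is pseudo-effective and $\delta^{(i)}$ may pair negatively against an exceptional curve). The resolution is to exploit the freedom in choosing $\delta^{(i)}$ within $\relint V$: using the visibility of $\delta^{(i)}$ from $\kappa \notin \Nef(X)$ and the convexity of $\Nef(X)$, one arranges that the $\delta^{(i)}$ lie in the interior of the big cone, so that for $c$ sufficiently large the class $c\delta^{(i)} - [K_X]$ lies in the interior of $\overline{\Eff}(X)$ and hence is big by Kodaira's trick. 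The klt condition is then a routine perturbation: express the big class as the sum of a small ample divisor and an effective one, pass to a log resolution, and scale coefficients into $(0,1)$.
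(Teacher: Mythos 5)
Your skeleton is the paper's: realize classes near the relevant boundary piece as positive multiples of adjoint classes $K_X+\Delta_i$ with $(X,\Delta_i)$ klt and $\Delta_i$ big, invoke Theorem \ref{thmA} to get a finitely generated adjoint ring whose support surjects onto a cone containing that piece, and conclude via Theorem \ref{thm:ELMNP}(2) and Proposition \ref{cor:5}. But the step you yourself flag as the main obstacle is where the proposal breaks. Your claimed fix --- that one can arrange the $\delta^{(i)}\in\relint V$ to lie in the interior of the big cone, so that $c\delta^{(i)}-[K_X]$ is big for $c\gg0$ --- is false in general: for $X=\PS^1\times C$ with $g(C)\geq2$ one computes that $\relint V$ is exactly the open ray spanned by the class of a fibre $\PS^1\times\{pt\}$, which has self-intersection $0$, so \emph{no} class in $\relint V$ is big, and no amount of freedom in choosing $\delta^{(i)}$ within $\relint V$ helps. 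The correct justification, which is the actual geometric heart of the paper's argument, does not go through bigness of $\delta^{(i)}$ at all: because the chosen points lie near the \emph{relative interior} of the visible boundary, the line through $\kappa$ and such a point meets the \emph{ample} cone, so the point is a convex combination $t\kappa+(1-t)\alpha$ with $\alpha$ ample and $t\in(0,1)$, i.e.\ a positive multiple of $K_X+A$ with $A$ ample (and $(X,A)$ klt after a Bertini perturbation). In the example above this is what makes $c\delta-[K_X]$ ample even though $\delta$ is not big; your route gives no reason for it.

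There is a second, related gap in your choice of auxiliary points. You ask for $\delta^{(1)},\dots,\delta^{(s)}\in\relint V$ near $\delta_0$ whose convex hull covers a neighborhood of $\delta_0$ in $\partial\Nef(X)$, and you need these classes to be rational in order to represent $c\delta^{(i)}-[K_X]$ by $\Q$-divisors. A priori $\partial\Nef(X)$ may contain no rational classes near $\delta_0$, and if the boundary is curved near $\delta_0$ the convex hull of finitely many boundary points cannot contain a boundary neighborhood --- it can do so only when the boundary is already locally polyhedral there, which is precisely statement (1) you are proving; so this step is circular as written. The paper sidesteps both issues by choosing finitely many \emph{rational} points $w_i$ merely \emph{close to} $F$ (not required to be nef, let alone in $V$) with $F$ in their convex hull, observing that $F\subseteq\partial\bigl(\Nef(X)\cap\sum\R_+w_i\bigr)$, and then proving that this cone is rational polyhedral via the adjoint ring $R(X;K_X+A_1,\dots,K_X+A_m)$ built from the decomposition $w_j=t_j\kappa+(1-t_j)\alpha_j$ described above. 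With your points replaced by such $w_i$ and the bigness/ampleness of $c w_i-\kappa$ derived from visibility rather than from bigness of boundary classes, your argument becomes the paper's proof.
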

\begin{proof}
The proof is almost by picture. Note first that since $K_X$ is not nef, the class $\kappa$ is not in $\Nef(X)$. The set $V$ is then precisely the points that $\kappa$ ``sees" on $\Nef(X)$.
\begin{figure}[htb]
\begin{center}
\includegraphics[width=0.55\textwidth]{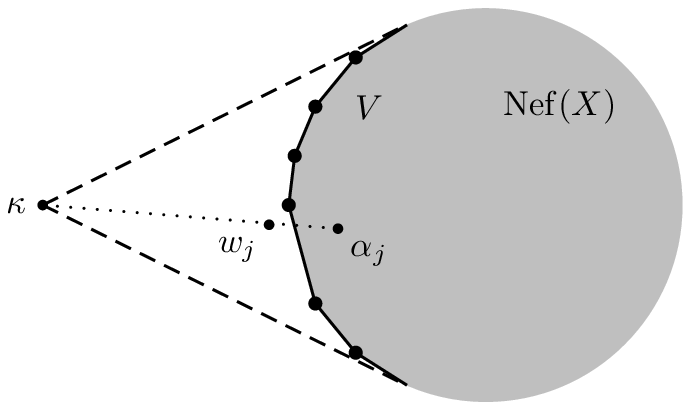}
\end{center}
\end{figure}

Since $F$ is compact, we can pick finitely many rational points $w_1,\dots,w_m\in N^1(X)_\R$ very close to $F$, such that $F$ is contained in the convex hull of these points. Then it is obvious that $F$ belongs to the boundary of the cone $\Nef(X)\cap\sum\R_+ w_i$, hence it is enough to show that this cone is rational polyhedral.

Note that since each $w_i$ is very close to $F$, and $F$ belongs to the relative interior of $V$, the line containing $\kappa$ and $w_i$ will intersect the ample cone. Therefore, there are rational ample classes $\alpha_j$ and rational numbers $t_j\in(0,1)$ such that 
$$w_j=t_j\kappa+(1-t_j)\alpha_j.$$ 
For each $j$, choose an ample $\Q$-divisor $A_j$ which represents the class $\frac{1-t_j}{t_j}\alpha_j$ such that the pair $(X,A_j)$ is klt (use Bertini's theorem). Then $w_j$ is the class of the divisor $t_j(K_X+A_j)$.  By Theorem~\ref{thmA}, the adjoint ring
\[
\mathfrak R=R(X;K_X+A_1, \dots, K_X+A_m)
\]
is finitely generated. Denote by $\pi\colon \Div_\R(X)\to N^1(X)_\R$ the natural projection. Then 
$$\Nef(X)\cap\sum\R_+w_i\subseteq\pi(\Supp\mathfrak R)$$
by Theorem \ref{thm:ELMNP}(2), and the conclusion follows by Proposition \ref{cor:5}.
\end{proof}

\begin{rem}\label{rem:cone}
Note that, with a bit more care, the proof above can be modified to prove the following: if additionally the divisor $K_X$ is big, then the whole set $V$ is contained in finitely many supporting rational hyperplanes. I leave the details to you.
\end{rem}

\begin{lem}\label{lem:implicationCone}
Let $X$ be a smooth projective variety such that $K_X$ is not nef. Then Theorem \ref{thm:ConeRevisited} (and Remark \ref{rem:cone}) imply the Cone and Contraction theorems.
\end{lem}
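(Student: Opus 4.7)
The plan is to convert both classical theorems into dual statements inside $\Nef(X)$, where Theorem~\ref{thm:ConeRevisited} applies directly.

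First I would set up a dictionary between $K_X$-negative extremal rays of $\NEb(X)$ and codimension-one faces of $\Nef(X)$ in the visible boundary $V$. For each $K_X$-negative extremal ray $R\subseteq\NEb(X)$, spanned by $[C]$, the dual face $F_R=R^\perp\cap\Nef(X)$ is a facet of $\Nef(X)$, and for any $\delta\in F_R$ and $t\in(0,1]$ one has $(t\kappa+(1-t)\delta)\cdot C=t(K_X\cdot C)<0$, so $t\kappa+(1-t)\delta\notin\Nef(X)$. Hence $F_R\subseteq V$, and since $F_R$ has codimension one, its relative interior lies in the relative interior of $V$ inside $\partial\Nef(X)$. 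Conversely, every rational codimension-one supporting hyperplane of $\Nef(X)$ meeting $\relint(V)$ is dual to a rational $K_X$-negative extremal ray of $\NEb(X)$.

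For the Cone theorem, fix an ample $\Q$-divisor $H$. Choosing an ample $A$ and normalising ray representatives by $A\cdot C=1$, the slice $\{A\cdot(-)=1\}\cap\NEb(X)$ is compact, and the condition $(K_X+H)\cdot C<0$ forces $K_X\cdot C\le-\varepsilon$ on this slice for some $\varepsilon>0$. Dually, the faces $F_R$ corresponding to such rays lie in a compact subset of $\relint(V)$, so Theorem~\ref{thm:ConeRevisited}(1) produces only finitely many rational supporting hyperplanes, and hence finitely many (rational) rays $R_i'$ with $(K_X+H)\cdot R_i'<0$. A standard convex-geometry argument, applied to the closed cone generated by $\NEb(X)_{K_X+H\ge 0}$ and the finitely many $R_i'$, then yields $\NEb(X)=\NEb(X)_{K_X+H\ge 0}+\sum R_i'$; taking $H=\frac{1}{n}A$ for $n\in\N$ gives a countable family $\{R_i\}$ together with the discreteness in $\NEb(X)_{K_X<0}$ claimed in Theorem~\ref{thm:cone}.

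For the Contraction theorem, given $R_i$ (rational, by the above), pick a Cartier divisor $L$ whose class lies in the relative interior of $F_{R_i}$, hence in $\relint(V)$. By Theorem~\ref{thm:ConeRevisited}(2), $L$ is semiample; Stein-factorising the morphism defined by $|mL|$ for $m$ sufficiently divisible yields $\cont_{R_i}\colon X\to Y$ with connected fibres to a normal projective variety, and a curve $C$ is contracted by $\cont_{R_i}$ iff $L\cdot C=0$, which by the choice of $L\in\relint(F_{R_i})$ is equivalent to $[C]\in R_i$. The step I expect to require the most care is the compactness argument in the preceding paragraph: one must genuinely verify that $(K_X+H)$-negativity translates into a compact subset of $\relint(V)$, which is precisely where the ampleness of $H$ (rather than merely bigness) is used, and which requires a careful handling of the interplay between the slice $\{A\cdot(-)=1\}$ in $N_1(X)_\R$ and the dual geometry of $V$ in $N^1(X)_\R$.
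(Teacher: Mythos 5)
Your overall strategy---dualising into $N^1(X)_\R$ and feeding compact pieces of the visible boundary into Theorem \ref{thm:ConeRevisited}---is the same as the paper's, and several ingredients are right: the computation $(t\kappa+(1-t)\delta)\cdot C=t(K_X\cdot C)<0$, which shows $R^\perp\cap\Nef(X)\subseteq V$, and the construction of $\cont_R$ from a Cartier class in the relative interior of the dual face via semiampleness (Theorem \ref{thm:ConeRevisited}(2)) and its Iitaka fibration. The genuine gap is the direction in which you run the dictionary. You start from an abstractly given $K_X$-negative extremal ray $R$ and \emph{assert} that $F_R=R^\perp\cap\Nef(X)$ is a facet of $\Nef(X)$. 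This is not a formal consequence of convex duality: for a non-polyhedral cone the face of $\Nef(X)$ dual to an extremal ray of $\NEb(X)$ can have any codimension (for a circular cone it is a single ray), and classically the codimension-one statement is deduced from the Contraction theorem itself (the target of $\cont_R$ has Picard number $\rho(X)-1$ and its pulled-back nef cone sits inside $F_R$), so assuming it here is circular. Moreover, your argument uses it at every turn: to get $\relint F_R\subseteq\relint V$ (without the facet property, boundary points near $F_R$ need not be visible), to make the assignment ``ray $\mapsto$ supporting hyperplane'' injective so that finitely many hyperplanes yield finitely many rays, to find a Cartier class in $\relint F_{R_i}$, and to conclude that $L\cdot C=0$ forces $[C]\in R_i$. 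The compactness step you flag is a second unresolved point of the same kind: the bound $(K_X+H)\cdot C\le-\varepsilon$ controls the angle of the segment $[\kappa,\delta]$ only against the single hyperplane $C^\perp$, not against the rest of $\NEb(X)$, so it does not by itself place the normalised faces $F_R$ in a fixed compact subset of $\relint V$.

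The paper's proof is organised precisely so as to avoid these issues, by running the duality the other way: it never starts from an a priori given extremal ray. One applies Theorem \ref{thm:ConeRevisited}(1) (and Remark \ref{rem:cone}) to compact pieces of $\relint V$; the finitely many supporting rational hyperplanes cut out codimension-one faces of $\Nef(X)$ contained in $V$, whose dual rays are automatically extremal (facet--extremal ray duality) and are $K_X$-negative by your computation read backwards: any $\delta\in\relint V$ is of the form $t\kappa+(1-t)\alpha$ with $\alpha$ ample, so a curve orthogonal to $\delta$ is negative on $\kappa$. These rays are the $R_i'$ of Theorem \ref{thm:cone}; any given $K_X$-negative extremal ray is then among them by the usual extremality argument once the decomposition of $\NEb(X)$ is in place, and its contraction is the Iitaka fibration of any Cartier class in the interior of the corresponding face, semiample by Theorem \ref{thm:ConeRevisited}(2). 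If you wish to keep your structure, you must first deduce from Theorem \ref{thm:ConeRevisited}(1) that $\partial\Nef(X)$ is locally rational polyhedral along $\relint V$, and only then derive the facet property, the rationality of the rays, and the finiteness; as written, these are assumed rather than proved.
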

\begin{proof}
Note that the $K_X$-negative extremal rays are dual (with respect to the intersection pairing) to the rational hyperplanes containing faces of $\Nef(X)$ which are themselves contained in $V$. Indeed, as in the proof of Theorem \ref{thm:ConeRevisited}, any class $\delta$ in the relative interior of $V$ can be written as $\delta=t\kappa+(1-t)\alpha$ for some $t\in(0,1)$, where $\kappa$ is the class of $K_X$, and $\alpha$ is some ample class. Hence, any curve orthogonal to $\delta$ must be negative on $\kappa$. This immediately implies the lemma, since for any extremal ray $R$, the contraction $\cont_R$ is the Iitaka fibration of any line bundle which belongs to the interior of the face orthogonal to $R$.
\end{proof}

This is a good place to point out that the Cone theorem has an additional claim, which I deliberately chose not to include in the statement above. Namely, the rays $R_i$ are not generated by just any curves -- they are generated by \emph{rational} curves. Furthermore, we can choose a rational curve $C_i$ generating $R_i$ so that 
$$0<{-}K_X\cdot C_i\leq\dim X+1.$$ 
As we will see below, the existence of such rational curves is not necessary in order to perform the Minimal Model Program. However, it has very important structural consequences for the geometry of certain varieties, such as Fanos, which are covered by rational curves. 

This rational curves claim is the only part of the original Cone theorem which cannot be deduced from the proof in \cite{CoL10} presented above. The original Cone theorem is proved by an ingenious bend-and-break method of Mori which proceeds by reduction to positive characteristic. The ultimate dream is that finite generation techniques could provide an insight into how to prove statements about rational curves without passing to positive characteristic, which would then give hope that similar claims hold on a wider class of spaces, such as K\"ahler manifolds.
\subsection*{Contractions in the MMP}
Let us go back to the procedure in the Minimal Model Program. The Cone and Contraction theorems tell us that that if we pick a $K_X$-negative extremal ray $R$, we can contract it to obtain another normal projective variety $Y$, and we hope that it shares many of the properties of $X$ that we started with, for instance $\Q$-factoriality. Note that, by Theorem \ref{thm:ConeRevisited}, the map $\cont_R$ is given by any semiample divisor lying in the relative interior of the set $R^\perp\cap\Nef(X)$, and it is therefore birational (recall that we assumed at the beginning that $K_X$ is pseudo-effective). Here the situation branches into two distinct cases.

Assume first that the exceptional set of the map $\cont_R$ contains a prime divisor $E$. Then, in fact, we have $\Exc(\cont_R)=E$, and moreover, $Y$ is also $\Q$-factorial -- I will prove a more general version of this in Theorem \ref{lem:nullflip}, but one should note that the proof is almost identical. In this case, we say that $\cont_R$ is a \emph{divisorial} contraction. 

A drawback is that $Y$ is no longer necessarily smooth, but still it has singularities which are very close to the smooth case, and we can continue our programme on $Y$. However, something changed for the better: the Picard number dropped by $1$ since we contracted the divisor $E$; our variety became simpler. 

Assume next that the exceptional set of the map $\cont_R$ does not contain a prime divisor, i.e.\ that we have $\codim_X\Exc(\cont_R)\geq2$. In this case, we say that $\cont_R$ is a \emph{flipping} contraction. 

This situation is bad: not only do we have that $Y$ is not $\Q$-factorial, but even $K_Y=(\cont_R)_*K_X$ is not a $\Q$-Cartier divisor. Indeed, since $\cont_R$ is an isomorphism in codimension $1$, we have $K_X=\cont_R^*K_Y$. If $C$ is a curve contracted by $\cont_R$, then $K_X\cdot C<0$, and by the projection formula this equals $K_Y\cdot (\cont_R)_*C=0$, a contradiction.

The great insight of Mori, Reid and others is this. Note that the divisor $K_X$ is anti-ample with respect to the map $\cont_R$, and the result that we want to end up with in the end should give the canonical divisor which is nef. Thus, it is a natural thing to try to construct at least a birational map $X^+\to Y$ which ``turns the sign" of all curves contracted by $\cont_R$; in other words, it ``flips" them. Therefore, we would like to have a diagram:
\[
\xymatrix{ 
X \ar@{-->}[rr]^{\varphi} \ar[dr]_{\cont_R} & \quad & X^+
  \ar[dl]^{\cont_R^+}\\
\quad & Y & \quad
}\]
such that $K_{X^+}$ is ample with respect to $\cont_R^+$.

This diagram, or just the map $\varphi$, is called \emph{the flip} of $\cont_R$. Since, by our requirements, the map $\varphi$ should not extract divisors, the morphism $\cont_R^+$ is also an isomorphism in codimension $1$. It is then not too difficult, but crucial, to show that the existence of the diagram is equivalent to the fact that the relative canonical ring
$$R(X/Y,K_X)=\bigoplus_{n\in\N}(\cont_R)_*\OO_X(nK_X)$$
is finitely generated as a sheaf of algebras over $(\cont_R)_*\OO_X=\OO_Y$. It immediately follows that $X^+$ is $\Q$-factorial and that the Picard number of $X^+$ is the same as that of $X$.

The flip as above is by now proved to exist in any dimension. The first proof for threefolds was given by Mori in \cite{Mor88}. It was proved in general in \cite{BCHM} by MMP techniques, and in \cite{Laz09,CaL10} as a consequence of Theorem \ref{thmA}. 
\subsection*{Termination of the MMP}
The variety $X^+$, thus, has all the desired features similar to $X$, so we continue the procedure with $X^+$ instead of $X$ (again, as in the case of divisorial contractions, we lose smoothness, but we are all right if we slightly enlarge our category). Unfortunately, it is not easy to find an invariant of varieties which behaves well under flips; the only such example currently exists on threefolds. It is, therefore, the crucial problem to find a sequence of divisorial contractions and flips which terminates.

We know how to do this for varieties of general type, and this was proved first in \cite{BCHM}. Here, I give an argument close to that from \cite{CoL10} -- I hope to convince you that it is not too difficult to deduce it as a consequence of Theorem \ref{thmA}.

\begin{thm}\label{thm:termination}
Let $X$ be a variety of general type. Then there exists a sequence of $K_X$-divisorial contractions and $K_X$-flips which terminates.
\end{thm}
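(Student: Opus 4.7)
\emph{Setup.} Since $X$ is of general type, $K_X$ is big. Choose a general ample $\Q$-divisor $A$ on $X$ such that $(X,A)$ is klt (by Bertini) and such that $K_X+A$ is ample (possible by taking $A$ sufficiently ample). For every rational $\varepsilon\in(0,1]$ the pair $(X,\varepsilon A)$ is klt and $\varepsilon A$ is big, so Theorem~\ref{thmA} applies and the adjoint ring
\[
\mathfrak R \,=\, R(X;\,K_X+\varepsilon A,\,K_X+A)
\]
is finitely generated; its support contains the segment $\Sigma=\{K_X+tA : t\in[\varepsilon,1]\}$, all of whose points are big divisors.

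\emph{Chamber decomposition as MMP.} Apply Theorem~\ref{thm:decomposition} to obtain a finite rational polyhedral decomposition $\Supp\mathfrak R=\bigcup\mcal C_i$, with maximal-chamber contractions $\vphi_i\colon X\dashto X_i$ and codimension-one wall contractions $\theta_j\colon X\dashto Y_j$ fitting into the commutative triangles of Theorem~\ref{thm:decomposition}(3). The segment $\Sigma$ passes through only finitely many chambers $\mcal C_{i_0},\dots,\mcal C_{i_N}$, ordered by decreasing $t$ and separated by walls $\mcal F_{j_k}$ at parameters $t_k\in(\varepsilon,1)$. Write $X_k:=X_{i_k}$ and let $K_k,A_k$ denote the strict transforms of $K_X,A$ on $X_k$; since $K_X+A$ is ample, $X_0=X$.

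\emph{Identifying wall crossings.} At a wall $\mcal F_{j_k}$ the divisor class is a positive multiple of $K_k+t_kA_k$, and $\rho_{i_kj_k}\colon X_k\to Y_{j_k}$ contracts precisely those curves $C\subset X_k$ with $(K_k+t_kA_k)\cdot C=0$. Because $K_k+tA_k$ is ample on $X_k$ for $t$ in the interior of the subinterval associated to $\mcal C_{i_k}$, but ceases to be nef for $t$ slightly below $t_k$, the affine function $t\mapsto(K_k+tA_k)\cdot C$ must have positive slope, so $A_k\cdot C>0$ and consequently $K_k\cdot C<0$. Hence $\rho_{i_kj_k}$ is a $K_k$-negative extremal contraction: if it contracts a divisor, the step $X_k\to X_{k+1}=Y_{j_k}$ is a $K$-divisorial contraction; if it is small, then $\rho_{i_{k+1}j_k}$ is also small and $X_k\dashto X_{k+1}$ is the corresponding $K$-flip.

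\emph{Termination and main obstacle.} Since there are only finitely many maximal chambers, the sequence $X=X_0\dashto X_1\dashto\cdots\dashto X_N$ is finite; by taking $\varepsilon$ smaller than the smallest positive nef threshold appearing in the finite list, $K_{X_N}$ is nef and $X_N$ is a minimal model. The principal difficulty is to translate the statements of Theorems~\ref{thm:ELMNP} and~\ref{thm:decomposition}, which are formulated on the initial variety $X$, into honest $K$-negative extremal Mori contractions on the intermediate models $X_k$, and to verify that $\Q$-factoriality is preserved at each step so the procedure remains in the allowed class of singularities and the iteration closes.
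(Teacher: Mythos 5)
Your overall strategy -- finite generation of an adjoint ring, the chamber decomposition of its support, and a walk through the chambers along a segment from $K_X+A$ towards $K_X$ -- is the same as the paper's, but two genuine gaps remain. The first is the one you flag yourself and then leave open: Theorems \ref{thm:ELMNP} and \ref{thm:decomposition} live on $X$, and nothing in your argument shows that on the intermediate model $X_k$ the pushed-forward divisors still generate a finitely generated ring with the ``same'' chamber structure, that $K_k+tA_k$ is nef (indeed semiample) precisely on the expected subinterval, or that $X_k$ is $\Q$-factorial. This is not a technical afterthought; it is the heart of the proof. The paper resolves it by a double induction -- on the Picard number (to absorb divisorial contractions) and on the number of chambers of the decomposition (to absorb flips) -- together with the key transfer statement that if an asymptotic order function $o_\Gamma$ is linear on a chamber $\mcal C_i$, then it is linear on $\varphi_*\mcal C_i$ for a flip $\varphi$ (\cite[Lemma 5.2]{CoL10}); after a flip one re-runs the whole argument on $Y$ using only the chambers lying on the far side of the wall, which are strictly fewer. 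Flagging this as ``the principal difficulty'' does not discharge it.

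The second gap you do not flag. Your ring $R(X;K_X+\varepsilon A,K_X+A)$ has support whose image in $N^1(X)_\R$ is only $2$-dimensional, so a wall class $K_k+t_kA_k$ may lie on a face of $\Nef(X_k)$ of codimension greater than one; the associated contraction (the ample model of the wall class) contracts \emph{all} curves orthogonal to it, and these need not span a single extremal ray. Such a step is in general neither a divisorial contraction nor a flip, so your identification of wall crossings with elementary MMP steps fails, and with it the $\Q$-factoriality and Picard-number bookkeeping (compare the proof of Theorem \ref{lem:nullflip}, where the fact that all contracted curves are proportional is deduced exactly from the wall spanning a hyperplane). This is why the paper takes ample divisors $A_1,\dots,A_m$ with $\dim\pi\big(\sum\R_+(K_X+A_i)\big)=\dim N^1(X)_\R$: full-dimensionality forces each relevant wall to be orthogonal to a single extremal ray. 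Finally, a smaller point: choosing ``$\varepsilon$ smaller than the smallest positive nef threshold appearing in the finite list'' is circular, since the list of walls depends on the ring, hence on $\varepsilon$; the paper avoids this by putting $K_X+\Delta$ with $\Delta=\varepsilon K_X$ (legitimate since $K_X$ is big, so $(X,\varepsilon K_X)$ is klt) into the ring, so that the ray $\R_+K_X$ itself lies in $\Supp\mathfrak R$ and the walk genuinely ends in a chamber containing $K_X$.
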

\begin{proof}[Sketch of the proof]
The proof is by double induction: the first level of induction is on the Picard number $\rho=\dim N^1(X)_\R$, and we can assume that the result holds for varieties with Picard number smaller than $\rho$. I define the second level of induction a few lines below.

Denote by $\pi\colon \Div_\R(X)\to N^1(X)_\R$ the natural projection. Similarly as in the proof of Theorem \ref{thm:ConeRevisited}, we choose ample $\Q$-divisors $A_1,\dots,A_m$ such that all the pairs $(X,A_i)$ are klt, such that the cone $\pi\big(\sum\R_+(K_X+A_i)\big)$ has dimension $\rho$, and that this cone contains an ample divisor. 

We can assume that $K_X$ is an effective divisor, and note that for $0<\varepsilon\ll1$, the pair $(X,\Delta=\varepsilon K_X)$ is klt. Therefore, by Theorem \ref{thmA}, the ring
$$\mathfrak R=R(X;K_X+\Delta, K_X+A_1, \dots ,K_X+A_m)$$
is finitely generated. We note that the cone 
$$\mcal C=\R_+K_X+\sum\R_+(K_X+A_i)$$
is equal to the support of $\mathfrak R$.

Let $\mcal C= \bigcup_{i\in I}\mcal C_i$ be the rational polyhedral decomposition as in Theorem \ref{thm:ELMNP}. The second level of induction is on the cardinality of the set $I$.

By Proposition \ref{cor:5}, the cone $\mcal C\cap \pi^{-1}\bigl(\Nef (X)\bigr)$ is rational polyhedral, and let $\mcal F$ be a codimension $1$ face of this cone which intersects the interior of $\mcal C$. If $R\subseteq N_1(X)_\R$ is the extremal ray of $\NEb(X)$ orthogonal to $\mcal F$, then the corresponding contraction $\cont_R$ is given by any basepoint free divisor which belongs to the interior of $\mcal F$, cf.\ the proof of Lemma \ref{lem:implicationCone}. 

If $\cont_R$ is divisorial, then we finish by induction on $\rho$. Therefore, we may assume that $\cont_R$ is flipping, and then by the discussion above, there exists the flip $\varphi\colon X\dashrightarrow Y$ of $\cont_R$. 

The map $\varphi$ is an isomorphism in codimension $1$, hence it induces isomorphisms $\Div_\R(X)\simeq \Div_\R(Y)$ and
$$\mathfrak R \simeq R(Y;K_Y+\varphi_*\Delta,K_Y+\varphi_*A_1,\dots,K_Y+\varphi_*A_m).$$
The cone $\mcal C'=\varphi_*\mcal C\subseteq \Div_\R(Y)$ has a decomposition $\mcal C'=\bigcup_{i\in I'}\mcal C_i'$ as in Theorem \ref{thm:ELMNP}, and it is a key step to show that we can assume that $I=I'$ and $\mcal C_i'=\varphi_*\mcal C_i$. In other words, it suffices to prove that if an asymptotic order function $o_\Gamma$ is linear on a cone $\mcal C_i$, then it is also linear on $\varphi_*\mcal C_i$. I omit the proof of this, but once one knows the correct statement, the proof becomes easy, see \cite[Lemma 5.2]{CoL10}.

It can be easily shown that for every $L\in\mcal F$, the divisor $\varphi_*L\in\Div_\R(Y)$ is again nef, but not ample. In other words, the set $\varphi_*\mcal F$ belongs to the boundary of the cone $\Nef(Y)$. Note that the interiors of the cones $\varphi_*\Nef(X)$ and $\Nef(Y)$ do not intersect, since otherwise $\varphi$ would be an isomorphism.

Let $V\subseteq\Div_\R(X)$ be the minimal vector space containing $\mcal C$. Let $\mathcal H\subseteq V$ be the rational hyperplane which contains $\mcal F$, and let $\mathcal C_{\ell}$, for $\ell\in J\subsetneq I$, be the cones such that $\mathcal C_{\ell}$ and $\Nef(X)$ are not on the same side of $\mathcal H$. 

If we pick rational generators $D_1,\dots,D_r$ of the cone $\mcal D=\bigcup_{\ell\in J}\mcal C_\ell'$, then the ring 
$$\mathfrak R'=R(Y;D_1,\dots,D_r)\simeq R(X;\varphi_*^{-1}D_1,\dots,\varphi_*^{-1}D_r)$$ 
is finitely generated by Lemma \ref{lem:3}. Also, we have $\Supp\mathfrak R'=\mcal D$, and this cone contains an ample divisor by the argument above. Since the size of $J$ is strictly smaller than that of $I$, we finish the proof.
\end{proof}

Note that the proof gives more -- it shows that the resulting minimal model for $K_X$ is at the same time a minimal model for \emph{every} divisor $D$ which is in the same chamber $\mcal C_{j_0}$ as $K_X$; note that $D$ is a multiple of an adjoint divisor, so it makes sense to talk about its minimal models. This is one of the guiding lights in this paper, so let us state it as a standalone result. It is, at this moment, convenient to switch to pairs and state it in greater generality.

\begin{thm}\label{thm:finiteness}
Let $X$ be a $\Q$-factorial projective variety, and let $\Delta_1,\dots,\Delta_r$ be big $\Q$-divisors such that each $(X,\Delta_i)$ is a klt pair. Let $\mcal C=\sum_{i=1}^r\R_+(K_X+\Delta_i)$. 

Then there exists a finite rational polyhedral subdivision $\mcal C=\bigcup\mcal C_k$ and finitely many birational maps $\varphi_k\colon X\dashrightarrow X_k$ such that $X_k$ is a minimal model for every divisor in $\mcal C_k$.
\end{thm}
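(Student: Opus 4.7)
The plan is to combine Theorem \ref{thmA} with the machinery of Section \ref{sec:graded}, closely mirroring the strategy of Theorem \ref{thm:termination}, but this time extracting the full chamber structure of the various nef cones rather than terminating at a single minimal model.

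By Theorem \ref{thmA}, the adjoint ring
$$\mathfrak R = R(X; K_X+\Delta_1, \dots, K_X+\Delta_r)$$
is finitely generated. Theorem \ref{thm:ELMNP} then supplies a finite rational polyhedral subdivision $\Supp \mathfrak R = \bigcup \mcal C_k$ into cones of maximal dimension on which every asymptotic order of vanishing $o_\Gamma$ is linear; this gives the desired decomposition of $\Supp \mathfrak R$. Classes in $\mcal C \setminus \Supp \mathfrak R$ are non-pseudo-effective (by Theorem \ref{thm:ELMNP}(2), once one knows $\Supp \mathfrak R$ contains a big divisor, which it does thanks to the bigness of the $\Delta_i$) and can be gathered into a degenerate face with no attached minimal model, or dealt with via an MMP with scaling producing Mori fibre spaces.

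For each maximal chamber $\mcal C_k$, I would construct the birational contraction $\varphi_k \colon X \dashto X_k$ by running an MMP for any fixed divisor $D \in \relint \mcal C_k$, exactly as in the proof of Theorem \ref{thm:termination}: contract or flip the $D$-negative extremal rays, each corresponding to a codimension-$1$ face of the nef part of $\Supp \mathfrak R$, inductively on the Picard number and on the cardinality of the chamber decomposition. The procedure terminates at a $\Q$-factorial variety $X_k$ with $\varphi_{k,*}D$ nef. Independence of the particular choice of $D \in \relint \mcal C_k$ follows from Lemma \ref{lem:equal proj}: any two interior divisors of the same chamber have isomorphic canonical models and induce the same birational map.

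The final step is to verify that $X_k$ is a minimal model for \emph{every} $D \in \mcal C_k$, not merely for the chosen interior $D$. The key observation is that after each flip $\vphi \colon X \dashto X^+$ in the MMP, the pushed-forward decomposition $\vphi_* \Supp \mathfrak R = \bigcup \vphi_* \mcal C_k$ is still a valid chamber decomposition for the corresponding adjoint ring on $X^+$, because flips are isomorphisms in codimension $1$ and hence preserve every $o_\Gamma$. Iterating, the image $\varphi_{k,*}\mcal C_k$ lies entirely in $\Nef(X_k)$, so Proposition \ref{cor:5}(2) applied on $X_k$ even yields semiampleness of $\varphi_{k,*}D$ for every $D \in \mcal C_k$; the remaining requirements ($\Q$-factoriality, the birational contraction property, discrepancy inequalities) are inherited from each MMP step. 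The main obstacle is precisely this chamber-preservation statement: one must check that linearity of each $o_\Gamma$ on a cone $\mcal C_k$ over $X$ passes to linearity on $\vphi_* \mcal C_k$ over $X^+$, which is the technical content isolated as \cite[Lemma 5.2]{CoL10} and invoked in Theorem \ref{thm:termination}; once it is granted, the rest of the argument is essentially book-keeping of chambers and extremal rays.
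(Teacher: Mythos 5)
Your overall route is the one the paper intends: Theorem \ref{thm:finiteness} is presented there precisely as a by-product of the proof of Theorem \ref{thm:termination}, i.e.\ finite generation of the adjoint ring (Theorem \ref{thmA}), the chamber decomposition of Theorem \ref{thm:ELMNP}, the preservation of the decomposition under flips (\cite[Lemma 5.2]{CoL10}), and the well-definedness statements of Theorem \ref{thm:decomposition}. So in spirit your proposal and the paper agree.

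Two steps, however, need repair. First, the claim that $\Supp\mathfrak R$ contains a big divisor ``thanks to the bigness of the $\Delta_i$'' is false: bigness of $\Delta_i$ says nothing about positivity of $K_X+\Delta_i$ (on a Fano variety with each $\Delta_i$ a small ample divisor, no element of $\mcal C$ is pseudo-effective and $\Supp\mathfrak R=\{0\}$). Consequently Theorem \ref{thm:ELMNP}(2), Proposition \ref{cor:5}(2) and Theorem \ref{thm:decomposition} cannot be invoked as you state them. The remedy is the one the paper itself uses in the proofs of Theorems \ref{thm:ConeRevisited} and \ref{thm:termination}: enlarge the ring by further adjoint divisors $K_X+\Delta_i+A_j$ with $A_j$ ample and $(X,\Delta_i+A_j)$ klt (Bertini), so that the support of the enlarged ring contains ample classes and is full-dimensional, run the argument there, and only at the end intersect the resulting chamber structure with $\mcal C$. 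This enlargement is also what makes the contractions attached to codimension-one faces of the nef part birational; your phrase ``exactly as in the proof of Theorem \ref{thm:termination}'' hides that the termination proof is written for $X$ of general type, whereas divisors in a chamber $\mcal C_k$ need not be big, so the bigness has to be supplied by the ample perturbation rather than assumed. Second, the independence of $X_k$ from the chosen $D\in\relint\mcal C_k$ is not given by Lemma \ref{lem:equal proj} (two interior points of the same chamber are in general not numerically equivalent); the correct reference is Theorem \ref{thm:decomposition}(1), which rests on the linearity of all $o_\Gamma$ on $\mcal C_k$. With these corrections your argument coincides with the paper's.
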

\section{Picture 2: Mori Dream Spaces}\label{sec:picture2}

In their influential paper \cite{HK00}, Hu and Keel introduced a class of varieties called \emph{Mori Dream Spaces}, which have exceptionally nice birational properties. As we will see, they have only finitely many birational maps to other $\Q$-factorial varieties which are isomorphisms in codimension $1$, and they possess, in some sense, a canonically given finitely generated divisorial ring. 

The original motivation for their study lies in the theory of moduli spaces of curves; however, it was immediately realised that another big family of varieties -- toric varieties -- belongs to this class. Also, it was an expectation based on the Minimal Model Program that Fano varieties are Mori Dream Spaces, which was confirmed in \cite{BCHM}; I discuss this below.

The construction of Hu and Keel uses the theory of variation of GIT structures. I do not touch upon this beautiful theory here; instead, I try to convince you that there are obvious parallels between the ingredients and the output of the classical MMP on the one hand, and the theory of Mori Dream Spaces on the other. Then in the next section I argue that both of these are just instances of a more general theory.

Let us start with a definition of Mori Dream Spaces from \cite{HK00}; for the definition of the pullback of a divisor under a birational map, see for instance \cite[1.0]{HK00}.

\begin{dfn}\label{dfn:25}
A $\Q$-factorial projective variety $X$ is a Mori Dream Space if:
\begin{enumerate}
\item $\Pic(X)_\Q =N^1(X)_\Q$,
\item $\Nef(X)$ is the affine hull of finitely many semiample line bundles, and
\item there are finitely many birational maps $f_i\colon X \dashrightarrow X_i$ to projective $\Q$-factorial varieties $X_i$ such that each $f_i$ is an isomorphism in codimension $1$, each $X_i$ satisfies (2), and $\overline{\Mov}(X)=\bigcup f^*_i\big(\Nef(X_i)\big)$.
\end{enumerate}
\end{dfn}

These spaces are really as nice as a variety can get: all possible cones inside $N^1(X)_\R$ are rational polyhedral, and as mentioned above, the birational geometry is as simple as one can generally expect: if one defines a more general version of the Minimal Model Program -- as we will do in the following section -- it becomes clear that the maps $f_i$ above are just maps in that MMP.

There is an obvious parallel between Definition \ref{dfn:25} and the conclusion of Theorem \ref{thm:finiteness}. In both cases we have a certain distinguished cone with its distinguished finite rational polyhedral subdivision, which gives a \emph{variation (or geography) of minimal models}. On a random variety you cannot hope to do any better than this.

It is then a natural question to ask whether there exists a divisorial ring associated to a Mori Dream Space which is finitely generated, in analogy with Theorem \ref{thmA}. We will shortly see that this is indeed the case.

To this end, let $X$ be a Mori Dream Space, and let $D_1, \dots, D_r$ be a basis of $\Pic(X)_\Q$ such that $\Effb(X)\subseteq \sum \R_+D_i$. Then a \emph{Cox ring} of $X$ is 
\[
R(X;D_1,\dots,D_r)= \bigoplus_{(n_1,\dots,n_r)\in\N^r}H^0(X,n_1D_1+\dots+n_r D_r).
\]
This ring depends on the choice of divisors $D_1, \dots, D_r$, but the only thing we really care about is its finite generation, and that question is independent of the choice of $D_i$, cf.\ Lemma \ref{lem:3}. Then we have:

\begin{thm}\label{thm:MDSfingen}
Let $X$ be a Mori Dream Space. Then any of its Cox rings is finitely generated.
\end{thm}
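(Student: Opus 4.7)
By Lemma \ref{lem:3}(2), finite generation is independent of the particular basis $D_1,\dots,D_r$ chosen in the definition of the Cox ring: it suffices to produce \emph{some} finitely generated divisorial ring whose support contains $\Effb(X)$, since then every $R(X;D_1,\dots,D_r)$ with $\Effb(X)\subseteq\sum\R_+D_i$ is a subring of a localisation of it and is finitely generated. The plan is therefore to exhibit a convenient generating set built from the small $\Q$-factorial modifications supplied by Definition \ref{dfn:25}.

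For each birational map $f_i\colon X\dashto X_i$ as in Definition \ref{dfn:25}(3), condition (2) gives finitely many semiample divisors $A_{i,1},\dots,A_{i,s_i}$ on $X_i$ whose conic hull is $\Nef(X_i)$. Since $f_i$ is an isomorphism in codimension $1$, the birational transforms $H_{i,j}=(f_i^{-1})_*A_{i,j}$ are well-defined $\Q$-divisors on $X$, and by Definition \ref{dfn:25}(3) the classes $[H_{i,j}]$ span $\overline{\Mov}(X)$. In addition, let $E_1,\dots,E_t$ be the prime divisors on $X$ whose classes do not lie in $\overline{\Mov}(X)$; these are the rigid components, each contributing a one-dimensional $H^0$. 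Finiteness of this list is a point that needs attention, but it follows because such a prime necessarily spans an extremal ray of $\Effb(X)\setminus\overline{\Mov}(X)$, and the chamber structure in Definition \ref{dfn:25}(3) forces $\Effb(X)=\overline{\Mov}(X)+\sum\R_+E_k$ to be rational polyhedral with only finitely many such extremal rays.

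Set $\mathfrak R=R\big(X;\{H_{i,j}\},\{E_k\}\big)$. The support of $\mathfrak R$ equals $\Effb(X)$, and the chambers
\[
\mcal C_i=\sum_j\R_+H_{i,j}+\sum_k\R_+E_k
\]
give a finite rational polyhedral decomposition. On a single chamber $\mcal C_i$, every integral divisor can be written as $D=\sum a_jH_{i,j}+\sum b_kE_k$ with $a_j,b_k\in\N$, and because each $E_k$ is rigid while $f_i$ is an isomorphism in codimension $1$, multiplication by the canonical sections $s_{E_k}\in H^0(X,E_k)$ together with the identification
\[
H^0\Big(X,\sum_j a_jH_{i,j}\Big)\;\iso\;H^0\Big(X_i,\sum_j a_jA_{i,j}\Big)
\]
yields
\[
H^0(X,D)\;=\;H^0\Big(X_i,\sum_j a_jA_{i,j}\Big)\cdot\prod_k s_{E_k}^{b_k}.
\]
Thus the restriction of $\mathfrak R$ to $\mcal C_i$ is, up to the $\prod s_{E_k}^{b_k}$-factor, isomorphic to the divisorial ring $R(X_i;A_{i,1},\dots,A_{i,s_i})$ of semiample divisors on $X_i$, which is finitely generated by Zariski's theorem (Example \ref{e_cutkosky}). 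Putting together the finitely many $i$'s, one obtains a finite generating set for $\mathfrak R$.

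The main obstacle is the structural statement used tacitly above: that the rigid primes form a finite set and that $\Effb(X)$ decomposes as $\overline{\Mov}(X)+\sum\R_+E_k$ into the Mori chambers $\mcal C_i$. This is where the three axioms of Definition \ref{dfn:25} must be used in concert with a Zariski-decomposition-type argument showing that every effective integral $D$ on $X$ splits as a movable part (sitting in some $f_i^*\Nef(X_i)$) plus a fixed part supported on the rigid primes. Once this geometric decomposition of $\Effb(X)$ is established, the gluing of finitely generated rings across chambers is routine, and the theorem follows.
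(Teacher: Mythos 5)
Your overall strategy -- pull back the semiample generators of the cones $\Nef(X_i)$ along the maps $f_i$, use Zariski's theorem (Example \ref{e_cutkosky}) for the movable part, and then account for the rest of $\Effb(X)$ by rigid prime divisors -- is the same strategy as the paper's, but the way you handle the non-movable part contains a genuine error. Your chambers $\mcal C_i=\sum_j\R_+H_{i,j}+\sum_k\R_+E_k$ attach \emph{all} rigid primes $E_k$ to \emph{every} movable chamber, and you claim that for $D=\sum a_jH_{i,j}+\sum b_kE_k$ one has $H^0(X,D)=H^0\bigl(X_i,\sum a_jA_{i,j}\bigr)\cdot\prod s_{E_k}^{b_k}$ ``because each $E_k$ is rigid''. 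This is false: rigidity of $E_k$ does not put $E_k$ into the fixed part of $|F+E_k|$ for an arbitrary movable $F$. Concretely, let $X$ be $\PS^2$ blown up at two points, $L$ the pullback of a line (nef, big, movable) and $C=L-E_1-E_2$ the strict transform of the line through the two points, a rigid $(-1)$-curve. Then $h^0(L+C)=h^0(2L-E_1-E_2)=4>3=h^0(L)$, so $H^0(X,L+C)\neq H^0(X,L)\cdot s_C$; indeed $L+C$ is itself semiample and movable. So the multiplication maps you describe do not exhaust the graded pieces on your chambers, and the gluing argument breaks down.

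The paper avoids this by using a finer decomposition: to each codimension-one face $\mcal F_\lambda$ of $\overline{\Mov}(X)$ (lying on the boundary and containing a big divisor) it associates the contraction $\vphi_\lambda\colon X\dashto X_\lambda$ of Theorem \ref{thm:decomposition} and attaches only the $\vphi_\lambda$-\emph{exceptional} divisors $E_{\lambda k}$ to that face, forming $\mcal D_\lambda=\mcal F_\lambda+\sum_k\R_+E_{\lambda k}$; then $\Effb(X)=\overline{\Mov}(X)\cup\bigcup_\lambda\mcal D_\lambda$, and on $\mcal D_\lambda$ the statement ``the $E_{\lambda k}$ add no sections'' is true precisely because they are exceptional for the contraction defined by the face itself. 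Note also that the structural statement you flag as ``the main obstacle'' (your proposed Minkowski-sum decomposition of $\Effb(X)$, finiteness of the rigid primes, and the Zariski-type splitting of every effective divisor) is exactly the nontrivial input; the paper's sketch defers its proof to \cite[Corollary 4.4]{KKL12}, but with the correct, face-by-face chamber structure rather than the global one you propose. To repair your argument you would need to replace your chambers by the $\mcal D_\lambda$ (or more generally by the Mori chambers $\sigma^*\Nef(Y)+\sum\R_+\{\sigma\text{-exceptional divisors}\}$ for birational contractions $\sigma$), at which point it becomes the paper's proof.
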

\begin{proof}[Sketch of the proof]
I use the notation from Definition \ref{dfn:25}. The cone $\overline{\Mov}(X)$ is rational polyhedral by the definition of a Mori Dream Space, and let $\Q$-divisors $M_1,\dots,M_p$ be its generators.  Then the divisorial ring $R(X;M_1,\dots,M_p)$ is finitely generated: indeed, for each $i$, let $N_{i1},\dots,N_{ip_i}$ be rational generators of the cone $\Nef(X_i)$. Then all the divisors $f_i^*N_{ik}$ form a set of generators of the cone $\overline{\Mov}(X)$, and each of the rings 
$$R(X;f_i^*N_{i1},\dots,f_i^*N_{ip_i})\simeq R(X_i;N_{i1},\dots,N_{ip_i})$$ 
is finitely generated since all $N_{ik}$ are semiample. The conclusion follows by Lemma \ref{lem:3}.

Let $\mathcal F_\lambda$ be all the codimension $1$ faces of all $f_i^*\Nef(X_i)$, with the property that $\mcal F_\lambda$ belong to the boundary of the cone $\overline{\Mov}(X)$, and that each $\mcal F_\lambda$ contains a big divisor. Let $\vphi_\lambda\colon X\dashto X_\lambda$ be the birational contraction associated to $\mcal F_\lambda$ as in Theorem \ref{thm:decomposition}, and let $E_{\lambda k}$ be the exceptional divisors of $\vphi_\lambda$. Then each set $$\mcal D_\lambda=\mcal F_\lambda+\sum\nolimits_k\R_+ E_{\lambda k}$$ 
is a rational polyhedral cone. 

Now, it can be shown that 
$$\overline{\Eff}(X)=\overline{\Mov}(X)\cup\bigcup\nolimits_\lambda\mcal D_\lambda.$$ 
The idea is to ``project" the pseudo-effective cone onto the movable cone. Details are easy, but a bit tedious, see the proof of \cite[Corollary 4.4]{KKL12}. This implies, in particular, that the cone $\overline{\Eff}(X)$ is rational polyhedral and equal to $\Eff(X)$, and it suffices to show that each of the rings $R(X;G_{\lambda 1},\dots,G_{\lambda p_\lambda})$ is finitely generated, where $G_{\lambda 1},\dots,G_{\lambda p_\lambda}$ are generators of $\mcal D_\lambda$ (indeed, it is easy to see that generators of all these rings, together with generators of the ring $R(X;M_1,\dots,M_p)$, generate a suitable Cox ring of $X$). But this follows similarly as above -- the point is that the divisors $E_{\lambda k}$ do not add any sections as they are $\varphi_\lambda$-exceptional. The details are left to the reader.
\end{proof}

The converse of Theorem \ref{thm:MDSfingen} also holds, but we have to push our techniques a bit further -- this will be done in the next section. The following is a direct consequence of Theorem \ref{thm:scalingbig}.

\begin{thm}\label{thm:MDScharacterisation}
Let $X$ be a $\Q$-factorial projective variety such that $\Pic(X)_\Q=N^1(X)_\Q$. Let $D_1, \dots, D_r$ be a basis of $\Pic(X)_\Q$ such that $\Effb(X)\subseteq \sum \R_+D_i$. 

If the ring $R(X;D_1,\dots,D_r)$ is finitely generated, then $X$ is a Mori Dream Space.
\end{thm}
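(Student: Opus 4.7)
The plan is to verify the three axioms of Definition \ref{dfn:25}. Axiom (1) is part of the hypothesis, so I would focus on (2) and (3), both of which will follow from the chamber-decomposition machinery of Section \ref{sec:graded}.

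First I would identify $\Supp\mathfrak R$ with $\Effb(X)$. Because $\Effb(X) \subseteq \mathcal C := \sum \R_+ D_i$ and $\Effb(X)$ contains the ample cone, in particular a big divisor, Theorem \ref{thm:ELMNP}(1)--(2) yields that $\Supp\mathfrak R$ is rational polyhedral and that every pseudo-effective class in $\mathcal C$ is effective; hence $\Supp\mathfrak R = \Effb(X)$. Since $\Pic(X)_\Q = N^1(X)_\Q$, the projection $\pi$ identifies the relevant cones in $\Div_\R(X)$ and $N^1(X)_\R$, and Proposition \ref{cor:5}(2) now gives that $\Nef(X)$ is rational polyhedral and every nef class is semiample, which is axiom (2); Proposition \ref{cor:5}(1) gives that $\overline{\Mov}(X)$ is rational polyhedral as well.

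For axiom (3), I would apply Theorem \ref{thm:ELMNP}(3) to obtain a finite decomposition $\Effb(X) = \bigcup_k \mathcal C_k$ on which every asymptotic order function $o_\Gamma$ is linear; after refining the subdivision, I may assume each chamber either lies inside $\pi^{-1}\bigl(\overline{\Mov}(X)\bigr)$ or has interior disjoint from it. Let $\mathcal C_1,\dots,\mathcal C_N$ denote the chambers of the first kind with non-empty interior. For each one, Theorem \ref{thm:decomposition}(1) produces a birational contraction $f_i\colon X \dashto X_i$, independent of the choice of interior Cartier divisor $D_i \in \inte(\mathcal C_i)$. Because any such $D_i$ is big and movable, Theorem \ref{thm:ELMNP}(4) forces its fixed part on a resolution $\widetilde X \to X$ to consist only of divisors exceptional over $X$, so the induced $f_i$ contracts no divisor, i.e.\ is a small birational modification. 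The partition $\overline{\Mov}(X) = \bigcup \mathcal C_i$, together with Lemma \ref{lem:equal proj} to ensure the finite list of $X_i$ depends only on the chambers and not on the chosen $D_i$, gives $\overline{\Mov}(X) = \bigcup f_i^*\Nef(X_i)$ as soon as one knows $f_{i*}\mathcal C_i = \Nef(X_i)$.

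The hard part will be showing that each $X_i$ is $\Q$-factorial and that condition (2) holds on $X_i$. Given a prime Weil divisor $E'$ on $X_i$ with strict transform $E$ on $X$, which is $\Q$-Cartier by hypothesis, I would enlarge $\mathfrak R$ by adjoining sections corresponding to a class obtained by perturbing an interior class of $\mathcal C_i$ by a small multiple of $E$; the enlarged ring remains finitely generated by Lemma \ref{lem:3}, and exploiting the linearity of all $o_\Gamma$ on $\mathcal C_i$ together with the smallness of $f_i$ I would exhibit $E'$ as the pushforward of a $\Q$-Cartier class lying in an extended chamber on $X$, proving $\Q$-Cartierness of $E'$. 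Once $\Q$-factoriality of $X_i$ is known, axiom (2) on $X_i$ follows by pushing $\mathfrak R$ forward along $f_i$ to obtain a finitely generated divisorial ring on $X_i$ with $\Effb(X_i)$ as support, containing an ample class, and reapplying Proposition \ref{cor:5}(2) on $X_i$. This $\Q$-factoriality step — presumably the content of Theorem \ref{thm:scalingbig} — is the subtle part of the argument, everything else being a repackaging of Theorem \ref{thm:ELMNP} and its immediate corollaries.
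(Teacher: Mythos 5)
Your outline of axioms (1) and (2), and the identification of $\Supp\mathfrak R$ with the pseudo-effective cone via Theorem \ref{thm:ELMNP} and Proposition \ref{cor:5}, is fine and matches the intended derivation. The genuine gap is in the step you yourself flag as subtle: $\Q$-factoriality of the models $X_i$ (and, relatedly, the claim $f_{i*}\mcal C_i=\Nef(X_i)$ and axiom (2) on $X_i$). As written, your punchline ``exhibit $E'$ as the pushforward of a $\Q$-Cartier class lying in an extended chamber, proving $\Q$-Cartierness of $E'$'' is a non sequitur: pushforwards of $\Q$-Cartier divisors under birational contractions are in general \emph{not} $\Q$-Cartier (this is exactly what fails for a flipping contraction), so nothing is proved by perturbing upstairs unless you also know that the pushforward of any divisor in the interior of $\mcal C_i$ is \emph{ample} on $X_i$, i.e.\ that $\varphi_i$ is the ample model of every interior divisor of its chamber, and that the chamber is full-dimensional in $N^1(X)_\R$ so that $D+\varepsilon E$ stays in it. Moreover, your appeal to Lemma \ref{lem:3} to ``adjoin'' $D+\varepsilon E$ does not apply directly, since $D+\varepsilon E$ need not lie in the cone $\sum\R_+D_i$ of chosen divisors, only its numerical class does.

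The missing idea that closes all of these gaps at once is the one place where the hypothesis $\Pic(X)_\Q=N^1(X)_\Q$ is really used, and you never invoke it there: it forces numerical equivalence of $\Q$-divisors to coincide with $\Q$-linear equivalence, hence every divisor in the interior of $\Supp\mathfrak R$ is gen in the sense of Definition \ref{dfn:gen} (via Lemma \ref{lem:3}), and Lemma \ref{lem:equal proj} then lets you replace $D+\varepsilon E$ by a numerically equivalent divisor inside the chamber without changing the ample model. This is precisely how the paper proceeds: it verifies the gen hypothesis and quotes Theorem \ref{thm:scalingbig}, whose $\Q$-factorial models are produced not chamber-by-chamber but by running the generalized MMP, where each elementary step preserves $\Q$-factoriality by Theorem \ref{lem:nullflip} (Claim \ref{claim:pullback}, which uses that all contracted curves of an elementary contraction span a single ray). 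Your Hu--Keel-style direct perturbation argument can be made to work in this setting, but only after supplying the gen/ample-model mechanism above; without it, the key step of your proposal does not go through.
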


\begin{cor}\label{cor:MDS}
If $X$ is a Fano variety, then $X$ is a Mori Dream Space.
\end{cor}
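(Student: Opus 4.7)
The plan is to apply Theorem \ref{thm:MDScharacterisation}. Assume $X$ is smooth for simplicity (the klt case is analogous); then $X$ is $\Q$-factorial, and since $-K_X$ is ample, Kodaira vanishing gives $H^1(X, \OO_X) = 0$, so $\Pic^0(X) = 0$ and hence $\Pic(X)_\Q = N^1(X)_\Q$.

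It remains to produce a basis $D_1, \dots, D_r$ of $\Pic(X)_\Q$ with $\Effb(X) \subseteq \sum \R_+ D_i$ such that the Cox ring $R(X; D_1, \dots, D_r)$ is finitely generated. The strategy is to realise this ring as an adjoint ring and invoke Theorem \ref{thmA}. Specifically, I would arrange the basis so that, in addition to the enclosure, each class $\Delta_i := D_i - K_X$ is big. Writing $\Delta_i \sim_\Q A_i + E_i$ with $A_i$ ample and $E_i$ effective, and then perturbing via Bertini applied to $|mA_i|$ for $m$ large, we replace $\Delta_i$ by a general $\Q$-linearly equivalent effective divisor with snc support and sufficiently small coefficients, so that $(X, \Delta_i)$ is klt. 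Theorem \ref{thmA} then delivers finite generation of $R(X; K_X + \Delta_1, \dots, K_X + \Delta_r)$, which equals $R(X; D_1, \dots, D_r)$ after clearing denominators, justified by Lemma \ref{lem:3}(1).

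The main obstacle is arranging a basis that both encloses $\Effb(X)$ and has every $\Delta_i = D_i - K_X$ big. The key geometric observation is that $-K_X$ lies in the interior of $\Amp(X)$, so adding $-K_X$ to any class sufficiently close to $\Effb(X)$ forces the result into the big cone. Concretely, I would choose a rational simplicial cone $\mcal C = \sum \R_+ D_i$ enclosing $\Effb(X)$ so tightly that each ray generator $D_i$ sits within a small neighbourhood of $\Effb(X)$; such a cone exists by finite-dimensional convex geometry. For such $D_i$, the class $D_i + (-K_X)$ is the sum of a nearly pseudo-effective class with the deeply ample class $-K_X$, which pushes it safely into the big cone. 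With this combinatorial setup in place, the proof reduces to one invocation of Theorem \ref{thmA}.
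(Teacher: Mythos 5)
Your overall route (reduce to Theorem \ref{thm:MDScharacterisation} and produce a finitely generated Cox ring via Theorem \ref{thmA}) is the paper's, and the first step giving $\Pic(X)_\Q=N^1(X)_\Q$ is fine. The gap is in the middle: Theorem \ref{thmA} does not apply to a class $\Delta_i=D_i-K_X$ merely because it is big; it needs an actual effective $\Q$-divisor in that class making $(X,\Delta_i)$ klt, and a big class need not contain any such representative. Your Bertini perturbation only moves the ample part $A_i$ of a decomposition $\Delta_i\sim_\Q A_i+E_i$; the coefficient of any prime divisor $\Gamma$ in \emph{every} effective member of the class is bounded below by $o_\Gamma(\Delta_i)$, and this cannot be made ``sufficiently small'' by a general choice. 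Moreover, nothing in your construction bounds the scale of the $D_i$: ``lying in a small (conical) neighbourhood of $\Effb(X)$'' is invariant under rescaling, so a generator may be a huge multiple of a class on or near an extremal ray. Concretely, on a del Pezzo surface of degree $1$ take $D=mE$ with $E$ a $(-1)$-curve and $m\gg 0$; then $(D-K_X)\cdot E=-m+1<0$, so every effective $\Q$-divisor $\Q$-linearly equivalent to $D-K_X$ contains $E$ with multiplicity at least $m-1$, and no klt pair $(X,\Delta)$ with $K_X+\Delta\sim_\Q D$ exists, even though $D-K_X$ is big. The same scale problem even threatens your bigness claim, since ``nearly pseudo-effective plus a fixed ample class'' is only big when the defect is small in absolute, not angular, terms.

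The repair is exactly the paper's choice: use that ampleness, not just bigness, is what $-K_X$ buys you. Take any basis $D_1,\dots,D_r$ of $\Pic(X)_\Q$ with $\Effb(X)\subseteq\sum\R_+D_i$ and rescale each $D_i$ by a small positive rational so that $A_i=D_i-K_X$ is ample (possible since $-K_X$ is ample and the ample cone is open); rescaling affects neither the basis property, nor the enclosure of $\Effb(X)$, nor finite generation (Lemma \ref{lem:3}(1)). Now Bertini applied to the ample classes $A_i$ produces klt pairs $(X,A_i)$, Theorem \ref{thmA} gives finite generation of $R(X;K_X+A_1,\dots,K_X+A_r)=R(X;D_1,\dots,D_r)$, and Theorem \ref{thm:MDScharacterisation} concludes. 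In short: replace ``$D_i-K_X$ big'' by ``$D_i-K_X$ ample'', which your setup permits after scaling and which is what the klt hypothesis of Theorem \ref{thmA} actually requires.
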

\begin{proof}
By Kodaira vanishing, we have $H^i(X, \OO_X)=H^i(X,K_X+(-K_X))= 0$ for all $i>0$. The exponential sequence
\[
0 \longrightarrow \Z \longrightarrow \OO_X \longrightarrow \OO^*_X \longrightarrow 0
\]
then yields the exact sequence
$$0=H^1(X,\OO_X) \to H^1(X,\OO^*_X)\to H^2(X,\Z)\to H^2(X,\OO_X)=0,$$
and in particular $\Pic(X)_\Q= N^1(X)_{\Q}$. Let $D_1,\dots,D_r$ be a basis of $\Pic(X)_\Q$ such that $\overline{\Eff}(X)\subseteq \sum \R_+D_i$, and such that $A_i=D_i-K_X$ is ample for every $i$. Then the Cox ring 
$$R(X; D_1, \dots,D_r)=R(X; K_X+A_1, \dots, K_X+A_r)$$ 
is finitely generated by Theorem \ref{thmA}. We conclude by Theorem \ref{thm:MDScharacterisation}.
\end{proof}

It is worth mentioning that Mori Dream Spaces have connections to various important developments in Algebraic Geometry and beyond. I advise you to read the wonderful survey \cite{McK10} in order to get a flavour of some of these directions of research.
\section{MMP beyond the canonical class}\label{sec:mmpbeyond}

In this section I discuss what is understood by a good birational theory of an algebraic variety. When the variety in question is smooth or has mild singularities, this reduces to the classical Minimal Model Program associated to the canonical class. Another instance of this story is Mori Dream Spaces. Everything that happens in this section should be looked at through the prism of these two main examples, and the goal is to find \emph{the largest possible class} where we can run something that looks like the classical MMP.

It is a reasonable question whether we can always achieve the MMP for the canonical class. In Section \ref{sec:picture1} we saw that, at least conjecturally, we are able to perform the programme for the class of varieties which have mild singularities, say klt. In general, if we have a $\Q$-factorial projective variety $X$ with arbitrary singularities, one approach is to take a resolution $f\colon Y\to X$. We can write 
$$K_Y+\Gamma=f^*K_X+E,$$
where $\Gamma$ and $E$ are effective $\Q$-divisors with no common components, and $E$ is $f$-exceptional. Then one can either try to do the MMP for the canonical class $K_Y$, or try to do the MMP for $K_Y+\Gamma$. 

There are problems with both of these approaches. In the first one, it is expected that the MMP will terminate; however, the resulting $K_Y$-MMP will not have any of the properties that we would like it to have -- sections of $K_X$ will not be preserved, and we possibly extracted divisors along the way (by taking this first blowup $f$). The second case is in some sense even worse -- our MMP will, in general, not even terminate, since the canonical ring $R(Y,K_Y+\Gamma)\simeq R(X,K_X)$ might not be finitely generated, see Example \ref{e_canonical}. 

All this really comes up in nature, and the mild singularities involved in the Minimal Model Program are indeed necessary. The following example demonstrates this point clearly; it was kindly communicated to me by J.\ Koll\'ar, and it is a straightforward generalisation of Sakai's example \cite{Som86}.

\begin{exa}\label{e_canonical}
Let $Y$ and $M$ be as in Example \ref{e_cutkosky}. Set $L=M\otimes\omega_Y^{-1}\otimes\OO_Y(1)$ and $\mathcal E=L\oplus \OO_Y(1)^{\oplus 3}$, and let $Z=\mathbb P(\mathcal E)$ with the projection map $\pi\colon Z\to Y$. Thus, $Z$ is a smooth $\PS^3$-bundle over $Y$, and denote $\xi=\OO_Z(1)$. Then 
$$\omega_Z=\pi^*(\omega_Y\otimes\det\mathcal E)\otimes\xi^{\otimes-4}=\pi^*(\omega_Y\otimes L\otimes\OO_Y(3))\otimes\xi^{\otimes-4}.$$
Consider the linear system $|\xi \otimes\pi^*\OO_Y(-1)|$. It contains smooth divisors $S_1,S_2,S_3$ corresponding to the quotients $\mathcal E \to L\oplus \OO_Y(1)^{\oplus 2}$, and note that $P=S_1\cap S_2\cap S_3$ is a codimension $3$ cycle corresponding to the quotient $\mathcal E\to L$. In particular, the base locus of $|(\xi\otimes\pi^*\OO_Y(-1))^{\otimes4}|$ is contained in $P$. 

Let $X$ be a general member of $|(\xi\otimes\pi^*\OO_Y(-1))^{\otimes4}|$. Then $X$ is smooth in codimension $1$, and since $Z$ is smooth, we have that $X$ is normal and Gorenstein. The adjunction formula \cite[Proposition 16.4]{Kol92} gives
$$\omega_X=\omega_Z\otimes\OO_Z(X)\otimes\OO_X=(\pi_{|X})^*(\omega_Y\otimes L\otimes\OO_Y(-1))=(\pi_{|X})^*M.$$
In particular, the canonical ring $R(X,\omega_X)\simeq R(Y,M)$ is not finitely generated, and it is easy to check that the singularities of $X$ (in the sense of the MMP) are very bad. A (more complicated) variation of this produces an example which is of general type, I leave the details to a particularly ambitious reader.
\end{exa}

\subsection*{Finding a right setup}
Therefore, there are indeed situations where the classical Minimal Model Program cannot work for the canonical class. On the other hand, Mori Dream Spaces show that there are varieties where we can do a version of the MMP for \emph{every} effective divisor. Of course, this is an exceptionally nice extreme, and we would like to find, in some sense the \emph{maximal} class of varieties where a version of this programme can be performed. Maybe it is too much to hope that there exists such a class which contains both the classical MMP and Mori Dream Spaces, since they can be, in some sense, unrelated or only loosely related. However, we will see that we can indeed build a theory which contains both of these \emph{pictures} as special instances.

Say we have a $\Q$-factorial projective variety $X$ and a $\Q$-divisor $D$ on $X$; note that here we allow $X$ to be \emph{arbitrarily} singular. In general, we do not a priori know much about the properties of the pair $(X,D)$, unless the divisor $D$ is in some well-known class, say if $D$ is semiample. Hence, ideally we would like to have a birational map $f\colon X\dashrightarrow Y$ to a $\Q$-factorial projective variety $Y$ such that the $\Q$-divisor $f_*D$ is semiample. We also want that the map $f$ induces the isomorphism between section rings $R(X,D)$ and $R(Y,f_*D)$. If $E_1,\dots,E_\ell$ are the prime divisors contracted by $f$, then this is achieved if, for instance,
\begin{equation}\label{eq:pullpush}
D=f^*f_*D+\sum r_i E_i
\end{equation} 
for some $r_i\geq0$, so we impose this condition as well. Such $Y$ is then called a \emph{minimal model of $D$}.

We first notice that, if an MMP as above can be performed for our $\Q$-divisor $D$, then $D$ cannot be \emph{isolated} in the N\'eron-Severi space $N^1(X)_\R$. The following lemma makes this more precise.

\begin{lem}
Let $X$ be a $\Q$-factorial projective variety, and let $D$ be a $\Q$-divisor on $X$. Assume that there exists an MMP for $D$ as explained above, and let $\pi\colon\Div_\R(X)\to N^1(X)_\R$ be the natural projection. 

Then there exist $\Q$-divisors $D_1,\dots,D_r$ such that
\begin{enumerate}
\item $D\in\sum\R_+D_i\subseteq\Div_\R(X)$,
\item $\dim\pi(\sum\R_+D_i)=\dim N^1(X)_\R$,
\item the ring $R(X;D_1,\dots,D_r)$ is finitely generated.
\end{enumerate} 
\end{lem}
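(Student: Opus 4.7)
The plan is to lift a spanning set of semiample divisors from the minimal model $Y$ back to $X$ via $f^*$, and to augment the resulting system by the $f$-exceptional prime divisors $E_1,\dots,E_\ell$; this produces a cone whose image in $N^1(X)_\R$ has full dimension and whose divisorial ring is finitely generated. By assumption we have a birational contraction $f\colon X\dashrightarrow Y$ to a $\Q$-factorial projective $Y$ with $f_*D$ semiample, and $D=f^*f_*D+\sum_{j=1}^\ell r_jE_j$ for some $r_j\geq0$. Set $s=\dim N^1(Y)_\R$. Since $\Amp(Y)\subseteq N^1(Y)_\R$ is a nonempty open cone, I can pick ample $\Q$-divisors $A_1,\dots,A_s$ on $Y$ whose classes form a basis of $N^1(Y)_\R$. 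Put $B_0:=f_*D$ and $B_i:=A_i$ for $i=1,\dots,s$; each $B_i$ is semiample, the classes $[B_1],\dots,[B_s]$ already span $N^1(Y)_\R$, and the divisorial ring $R(Y;B_0,\dots,B_s)$ is finitely generated (a standard consequence of Zariski's theorem, cf.\ the proof of Theorem \ref{thm:MDSfingen}).

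Using a common resolution of $f$, I interpret each $f^*B_i$ as a $\Q$-divisor on $X$, and take as generators $D_i:=f^*B_i$ for $i=0,\dots,s$ and $D_{s+j}:=E_j$ for $j=1,\dots,\ell$. Condition (1) is immediate: $D=D_0+\sum_j r_j D_{s+j}$ with $r_j\geq0$. For (2), the classes $[D_1],\dots,[D_s]$ span the subspace $f^*N^1(Y)_\R\subseteq N^1(X)_\R$ (with $f^*$ injective because $f^{-1}$ extracts no divisors, as is standard in the MMP setup), while the identity $\rho(X)=\rho(Y)+\ell$ --- a routine common-resolution check for a birational contraction with exactly $\ell$ exceptional prime divisors --- shows that $[D_1],\dots,[D_s],[E_1],\dots,[E_\ell]$ together span all of $N^1(X)_\R$.

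The heart of (3), and the step I expect to be the main obstacle, is the identification $H^0\bigl(X,\sum_i n_i f^*B_i + \sum_j m_j E_j\bigr) \cong H^0\bigl(Y,\sum_i n_iB_i\bigr)$ for every $(n_i,m_j)\in\N^{s+1+\ell}$. Once this holds, we get an isomorphism of graded rings $R(X;D_0,\dots,D_s,E_1,\dots,E_\ell) \cong R(Y;B_0,\dots,B_s) \otimes_\C \C[y_1,\dots,y_\ell]$, which is finitely generated by the previous paragraph; Lemma \ref{lem:3}(2) then lets us enlarge the generator set to include $D$ itself and conclude (3). The identification is transparent when $f$ is already a birational morphism, via $f_*\OO_X(\sum_j m_jE_j)=\OO_Y$ for $m_j\geq0$ combined with the projection formula. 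For general $f$ I would work on a common resolution $p\colon W\to X$, $q\colon W\to Y$ and invoke the function-field description of $H^0$: a function $g\in k(X)=k(Y)$ lies in the left-hand $H^0$ iff its order of vanishing along each prime divisor of $X$ meets the appropriate inequality; along strict transforms of divisors from $Y$ this recovers precisely the defining conditions of $H^0(Y,\sum_i n_iB_i)$, while along each $E_j$ it holds automatically since $m_j\geq0$ and the $E_j$ are $f$-exceptional. Controlling the multiplicities $\mult_{E_j}(f^*B_i)$ cleanly on the resolution --- and verifying that adding nonnegative exceptional corrections truly contributes no new sections --- is the technical hurdle that I would expect to consume the bulk of the proof.
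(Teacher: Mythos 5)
Your argument is correct and is essentially the paper's own proof: both lift $f_*D$ together with ample divisors from $Y$, adjoin the $f$-exceptional divisors, and reduce finite generation to the fact that adding effective $f$-exceptional divisors to pullbacks of divisors from $Y$ creates no new sections (the paper merely packages the exceptional directions as $f^*G_1+\ell r_iE_i$ rather than the bare $E_j$, so that it can conclude via Lemma \ref{lem:3} instead of your tensor-with-a-polynomial-ring observation; conditions (1) and (2) are handled at the same level of detail in both). The ``technical hurdle'' you flag is precisely the step the paper also asserts without proof, and it is harmless: for $g\in H^0\bigl(Y,\sum n_iB_i\bigr)$ one pulls back $\ddiv_Y(g)+\sum n_iB_i\geq 0$ to a common resolution and pushes forward to $X$ to get $\ddiv_X(g)+\sum n_if^*B_i\geq 0$ (so the inequality along each $E_j$ is not automatic from $m_j\geq0$ alone, but follows from this), while the reverse inclusion follows by pushing forward to $Y$ using $f_*f^*=\mathrm{id}$ and $f_*E_j=0$, so the identification holds for any divisors $B_i$ on the $\Q$-factorial $Y$, semiample or not.
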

\begin{proof}
We assume the notation as above. In particular, let $f\colon X\dashrightarrow Y$ be an MMP for $D$. Since $f_*D$ is semiample, there exist semiample $\Q$-divisors $G_1,\dots,G_m$ on $Y$ such that:
\begin{enumerate}
\item $f_*D\in\sum\R_+G_i\subseteq\Div_\R(Y)$,
\item the dimension of the image of the cone $\sum\R_+G_i$ in $N^1(Y)_\R$ is maximal, and
\item the ring $R(Y;G_1,\dots,G_m)$ is finitely generated.
\end{enumerate}
Indeed, we take $G_1=f_*D$, and we can pick $G_2,\dots,G_m$ to be ample.

Recall that $E_1,\dots,E_\ell$ are the $f$-exceptional prime divisors on $X$, cf.\ \eqref{eq:pullpush}. Now we define $D_1,\dots,D_r$, with $r=m+\ell$, as follows. Set 
$$D_i=f^*G_i$$
for $i=1,\dots,m$, and set 
$$D_{m+i}=f^*G_1+\lambda_i E_i$$
for $i=1,\dots,\ell$, where $\lambda_i=\ell r_i$. Then it is easy to see that (1) and (2) hold. It remains to show that the ring $R(X;D_1,\dots,D_r)$ is finitely generated.

For non-negative integers $k_1,\dots,k_r$, denote $D_{k_1,\dots,k_r}=\sum k_i D_i$, and note that 
$$\textstyle D_{k_1,\dots,k_r}=\sum_{i=1}^m f^*(k_i G_i)+\big(\sum_{i=m+1}^r k_i\big)f^*G_1+\sum_{i=m+1}^r k_i\lambda_i E_i.$$
This implies
$$\textstyle H^0(X,D_{k_1,\dots,k_r})=H^0\Big(X,\sum_{i=1}^m k_i D_i+\big(\sum_{i=m+1}^r k_i\big)D_1\Big),$$
and thus
$$R(X;D_1,\dots,D_r)\simeq R(X;D_1,\dots,D_m,D_1,\dots,D_1).$$
Now, this last ring is finitely generated by Lemma \ref{lem:3}, as the ring $$R(X;D_1,\dots,D_m)\simeq R(Y;G_1,\dots,G_m)$$
is finitely generated.
\end{proof}

Therefore, unless you have a finitely generated divisorial ring $\mathfrak R$ such that $D\in\Supp\mathfrak R$ which is \emph{full} (in the sense that the image of $\Supp\mathfrak R$ in $N^1(X)_\R$ is maximal dimensional), then you stand no chance of ever performing the Minimal Model Program for this $D$.

Now that we have the graded ring $\mathfrak R=R(X;D_1,\dots,D_r)$, we recall that the chamber decomposition from Theorem \ref{thm:ELMNP} gives us natural maps 
$$\varphi_i\colon X\dashto X_i$$
as in Theorem \ref{thm:decomposition}. This resembles strongly the situations in Sections \ref{sec:picture1} and \ref{sec:picture2}, but as we see immediately, it fails in two crucial ways.

First, recall that one of the requirements that we had was that the maps $\varphi_i$ factor into \emph{elementary} maps as defined earlier. In general (in particular, if the support of $\mathfrak R$ does not contain mobile divisors), it seems hopeless to expect such a factorisation. We would like to imitate the procedure in Section \ref{sec:picture1}: to even start the process, $\Supp\mathfrak R$ has to intersect the ample cone. Thus, we include the condition that $\Supp\mathfrak R$ contains an ample divisor -- in most applications, like in the context of Sections \ref{sec:picture1} and \ref{sec:picture2}, this is harmless.

Second, a fundamental requirement is that all $X_i$ are $\Q$-factorial varieties. Recall that $X_i=\Proj R(X,D_i)$ for some (equivalently, any) $\Q$-divisor $D_i$ in the interior of the chamber $\mcal C_i$. Let $D_i'$ be any $\Q$-divisor such that $D_i\equiv D_i'$. If $X_i$ is $\Q$-factorial, then in particular, the divisor $(\varphi_i)_*D_i'$ is $\Q$-Cartier. It is easy to show \cite[Lemma 4.6]{KKL12} that in that case, the section ring $R(X,D_i')$ is also finitely generated: one has to show that $(\varphi_i)_*D_i\equiv (\varphi_i)_*D_i'$ and that $\varphi_i$ preserves the section ring $R(X,D_i')$; since $(\varphi_i)_*D$ is ample, the conclusion follows.

Therefore, the divisors in the interior of $\Supp\mathfrak R$ must be pretty special -- it is not in general true that finite generation of section rings is a numerical property, see Example \ref{exa:notgen}. These divisors deserve a special name.

\begin{dfn}\label{dfn:gen}
Let $X$ be a $\Q$-factorial projective variety. A $\Q$-divisor $D$ is \emph{gen} if for every $\Q$-divisor $D'\equiv D$, the section ring $R(X,D')$ is finitely generated. 
\end{dfn}

Therefore, our last requirement must be that all the divisors in the interior of $\Supp\mathfrak R$ are gen. We are on the right track: it is an easy exercise to check that this is the situation in both Sections \ref{sec:picture1} and \ref{sec:picture2}.

To finish this part of the discussion, one can ask whether this is a redundant condition -- maybe it is true that if you have a divisorial ring $\mathfrak R$ as above (such that the dimension of $\Supp\mathfrak R$ is maximal, and this cone contains an ample divisor), then the interior of $\Supp\mathfrak R$ is made up of gen divisors automatically. The following example \cite[Example 4.8]{KKL12} shows that this is not the case.

\begin{exa}\label{exa:notgen}
Let $E$, $D$ and $Y$ be as in Example \ref{e_cutkosky}, and let $p \colon Y\to E$ be the natural projection. Note that $Y$ is a smooth surface with Picard number $2$. Consider 
$$L_1=  c_1\big(\OO_Y(1)\big)\quad\text{and}\quad L_2=c_1\big(\OO_Y(1)\otimes p^*\OO_{E}(-D)\big).$$ 
Then $L_1$ and $L_2$ are numerically equivalent nef and big divisors. One can show that $L_2$ is semiample while $L_1$ is not, and that $R(Y, L_2)$ is finitely generated while $R(Y, L_1)$ is not, see \cite[Example 10.3.3]{Laz04}.

Since $L_2$ is semiample but not ample, there exists an irreducible curve $C$ on $Y$ such that $L_2\cdot C=0$. Since $L_2$ is big and nef, we have $L_2^2>0$, so the Hodge index theorem then implies $C^2<0$. 

Now, set $F = L_2+C$, and note that $F$ is not nef. Let $H$ be any ample divisor on $Y$. Since the Picard number of $Y$ is $2$, it follows that $L_2\in\R_+F+\R_+H$. Set
$$\mathfrak R=R(Y;F,H).$$
Then one can show the following:
\begin{enumerate}
\item $\mathfrak R=R(Y;F,H)$ is finitely generated, and
\item \emph{none} of the divisors in the cone $\R_+F+\R_+L_2\subseteq\Supp\mathfrak R$ is gen. 
\end{enumerate}
For (2), it suffices to observe that any divisor in $\R_+F+\R_+L_2$ is numerically equivalent to a non-negative linear combination of $L_1$ and $C$. The details are an easy exercise.
\end{exa}

\subsection*{Existence of extremal contractions}
Let $X$ be a $\Q$-factorial projective variety and let $D_1,\dots,D_r$ be $\Q$-divisors on $X$ such that the ring $\mathfrak R=R(X;D_1,\dots,D_r)$ is finitely generated. Denote by $\pi\colon\Div_\R(X)\to N^1(X)_\R$ the natural projection. Assume that $\Supp\mathfrak R$ contains an ample divisor, that $\pi(\Supp\mathfrak R)$ spans $N^1(X)_\R$, and that every divisor in the interior of $\Supp\mathfrak R$ is gen. Fix a divisor $G\in\Supp\mathfrak R$. I will show that, following the general strategy of Section \ref{sec:picture1}, we can construct an MMP for $G$.

By Proposition \ref{cor:5}, the cone $\mcal N=\Supp\mathfrak R\cap\pi^{-1}\big(\Nef(X)\big)$ is rational polyhedral, and every element of this cone is semiample. We pick any of its codimension $1$ faces $\mcal F$, such that $\mathcal F$ intersects the interior of $\mcal N$, and that $G$ and $\mcal N$ are not on the same side of the rational hyperplane which contains $\mcal F$. Then any line bundle in the relative interior of $\mcal F$ gives a birational contraction $f\colon X\to Y$. 

As in the classical setting, there are two cases: either the codimension of the exceptional locus is $1$, or it is at least two. We will see in the following Theorem \ref{lem:nullflip} that the first case is good, and in the second case we need a bit more work to rectify the fact that $Y$ is not $\Q$-factorial; however, strategy in both cases is similar. Then we can finish similarly as in Theorem \ref{thm:termination}, the details are in \cite[Theorem 5.4]{KKL12}.

The main deal is, therefore, to prove the existence of elementary contractions. This is far from straightforward -- observe that the construction of flips in Section \ref{sec:picture1} was an immediate consequence of finite generation (more precisely, of the relative version of Theorem \ref{thmA}). However, unlike in the classical situation, we have only one finitely generated ring $\mathfrak R$ at hand, and therefore we cannot apply any of the standard techniques like restricting ourselves to open subsets and so on. 

Thus, the following is the main result \cite[Theorem 5.2]{KKL12}.

\begin{thm}\label{lem:nullflip}
Let the notation and assumptions be as above. Then:
\begin{enumerate}
\item if the exceptional locus of $f$ contains a divisor, then this locus is a single prime divisor, and $Y$ is $\Q$-factorial,
\item if $f$ is an isomorphism in codimension $1$, then there exists a diagram
\[
\xymatrix{ 
X \ar@{-->}[rr]^{\varphi} \ar[dr]_f & \quad & X^+ \ar[dl]^{f^+}\\
\quad & Y & \quad
}\]
such that $\varphi$ is an isomorphism in codimension $1$ which is not an isomorphism, and $X^+$ is $\Q$-factorial.
\end{enumerate}
\end{thm}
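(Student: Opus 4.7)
My plan is to exploit the chamber decomposition of $\Supp\mathfrak R$ from Theorem~\ref{thm:ELMNP} together with the canonical birational contractions produced by Theorem~\ref{thm:decomposition}, applied to the face $\mcal F$ and its two adjacent maximal chambers $\mcal C_+\subseteq\mcal N$ and $\mcal C_-$ on the opposite side. Since $\pi(\Supp\mathfrak R)=N^1(X)_\R$ has full dimension, I may pick $D_+\in\inte\mcal C_+$ with ample class; then by Theorem~\ref{thm:decomposition}(1) the map $\varphi_+\colon X\dashto X_+$ is the identity, so $X_+=X$, and the morphism $\rho_+\colon X_+\to Y$ from part~(3) coincides with $f$. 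Picking any $D_-\in\inte\mcal C_-$ (automatically gen), I set $X^+:=X_-=\Proj R(X,D_-)$ and let $\varphi:=\varphi_-\colon X\dashto X^+$ and $f^+:=\rho_-\colon X^+\to Y$ be the associated maps, so that $f=f^+\circ\varphi$ in the birational sense.

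The key observation is that every asymptotic valuation $o_\Gamma$ is linear on $\mcal C_-$ by Theorem~\ref{thm:ELMNP}(3) and vanishes on $\mcal F$, since divisors in $\mcal F$ are semiample and $o_\Gamma$ vanishes on all semiample divisors. Because $\mcal F$ has codimension $1$ inside $\mcal C_-$, each such $o_\Gamma$ restricted to $\mcal C_-$ must be a non-negative scalar multiple of a single linear functional $\ell$ cutting out $\mcal F$ in the affine span of $\mcal C_-$. Consequently all prime divisors contracted by $\varphi$ -- namely those $\Gamma$ with $o_\Gamma(D_-)>0$ -- give rise to proportional functionals on $\mcal C_-$, and a direct comparison via Theorem~\ref{thm:ELMNP}(4) and the factorisation $f=f^+\circ\varphi$ identifies the $f$-exceptional prime divisors on $X$ with the $\varphi$-exceptional ones.

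For part~(1), if $\Exc(f)$ contained two distinct prime divisors $E_1$ and $E_2$, the proportionality $o_{E_i}=c_i\ell$ with $c_i>0$ would yield $o_\Gamma(c_2 E_1-c_1 E_2)=0$ for every geometric valuation $\Gamma$. Enlarging $\mathfrak R$ by $E_1$ and $E_2$ via Lemma~\ref{lem:3} and applying Lemma~\ref{lem:ords} to a suitable effective representative then forces $c_2 E_1-c_1 E_2$ to be numerically $f$-trivial and to pull back from a $\Q$-Cartier divisor on $Y$; pushing forward by $f_*$ and using that both $E_i$ are $f$-exceptional gives $c_2 E_1=c_1 E_2$, hence $E_1=E_2$, a contradiction. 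Thus $\Exc(f)$ is a single prime divisor $E$, and the $\Q$-factoriality of $Y$ follows by the standard descent: for any prime Weil divisor $W$ on $Y$, the proper transform $\tilde W$ is $\Q$-Cartier on $X$; a unique $c\in\Q$ makes $\tilde W+cE$ numerically $f$-trivial; and another application of Lemma~\ref{lem:ords} after enlarging $\mathfrak R$ by $\tilde W+cE$ shows it is $f$-semiample, hence descends to a $\Q$-Cartier divisor on $Y$ with pushforward $W$.

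For part~(2), the codimension-$\geq 2$ hypothesis on $\Exc(f)$ combined with the identification of $f$- and $\varphi$-exceptional primes forces $\varphi$ to be an isomorphism in codimension $1$; it is not itself an isomorphism, for otherwise pushing forward an ample class from $X^+$ would land simultaneously in $\mcal C_+$ and $\mcal C_-$, collapsing them. The $\Q$-factoriality of $X^+$ is proved by the same descent mechanism as in part~(1), with $\varphi$ in place of $f$ and $D_-$ in place of $H$. The hard part throughout is these $\Q$-factoriality claims, which require enlarging $\mathfrak R$ to include proper transforms of arbitrary Weil divisors while preserving finite generation and the chamber structure; the proportionality of every relevant $o_\Gamma$ to the single functional $\ell$ established above is exactly what makes these extensions tractable.
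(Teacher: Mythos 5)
Your construction of the diagram is essentially the paper's: pass to the full-dimensional chamber $\mcal C_-$ adjacent to $\mcal F$ on the far side of $\Nef(X)$, set $X^+=\Proj R(X,D_-)$, and use Theorem \ref{thm:decomposition} to get $\varphi$ and $f^+$ with $f=f^+\circ\varphi$; the observation that all $f$-contracted curves are proportional is also the right starting point. The genuine gap is in both $\Q$-factoriality arguments (and in your two-divisor argument for part (1)): you repeatedly ``enlarge $\mathfrak R$'' by $E_1,E_2$, or by $\tilde W+cE$, ``via Lemma \ref{lem:3}''. Lemma \ref{lem:3}(2) only covers divisors already lying in the cone $\sum\R_+D_i$; it gives no finite generation after adjoining the proper transform of an arbitrary Weil divisor, and such finite generation is precisely what is unknown here (it is the whole difficulty of the theorem, as you yourself concede in your last sentence). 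In addition, $o_\Gamma$ is only defined on effective divisors, so the expression $o_\Gamma(c_2E_1-c_1E_2)=0$ is not meaningful, and your proposal never actually uses the gen hypothesis in an essential way -- a warning sign, since Example \ref{exa:notgen} shows the conclusion fails without it.

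The paper's way around this is a pullback claim that avoids any enlargement of $\mathfrak R$: for every $\Q$-divisor $M$ with $M\equiv_f 0$ one has $M\sim_\Q f^*M_Y$ with $M_Y$ $\Q$-Cartier on $Y$. Since $\dim\pi(\Supp\mathfrak R)=\dim N^1(X)_\R$, the image $\pi(\mcal F)$ spans the hyperplane orthogonal to the contracted ray, so one can write $M\equiv\sum\lambda_iB_i$ with $B_i$ in the relative interior of $\mcal F$, hence semiample by Proposition \ref{cor:5}. Replacing $B_1$ by $B_1'=\lambda_1^{-1}\big(M-\sum_{i\geq2}\lambda_iB_i\big)\equiv B_1$, the \emph{gen} hypothesis together with Lemmas \ref{lem:numericalBig} and \ref{lem:ords} shows $B_1'$ is semiample, and Lemma \ref{lem:equal proj} identifies $\Proj R(X,B_1')$ with $Y$ compatibly with $f$; this yields $M\sim_\Q f^*M_Y$. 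With this claim, the descent you sketch works verbatim, applied to $P'-\alpha G$ (for $X^+$) and $P'-\alpha E$ (for $Y$), with no new rings introduced. Finally, the statement that $\Exc(f)$ is a single prime divisor does not need any of this machinery: by the Negativity lemma \cite[Lemma 3.39]{KM98}, any $f$-exceptional prime divisor $E$ satisfies $E\cdot C<0$ for every contracted curve $C$ (all such curves being proportional), so every contracted curve lies in $E$ and $\Exc(f)=E$.
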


\begin{proof}
Let $\Supp\mathfrak R=\bigcup\mcal C_i$ be the decomposition as in Theorem \ref{thm:ELMNP}. By the proof of Proposition \ref{cor:5}, there is a cone $\mathcal C_j\nsubseteq\pi^{-1}\big(\Nef(X)\big)$ of dimension $\dim\Supp\mathfrak R$, such that $\mcal F$ is a face of $\mathcal C_j$. Let $\varphi\colon X\dashrightarrow X^+$ be the birational contraction associated to any line bundle in the interior of $\mcal C_j$. Then by Theorem \ref{thm:decomposition}, there is a morphism $f^+\colon X^+\longrightarrow Y$ and the diagram as above. 
\begin{figure}[htb]
\begin{center}
\includegraphics[width=0.4\textwidth]{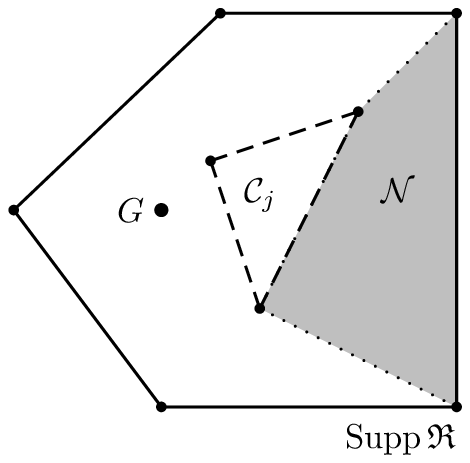}
\end{center}
\end{figure}

Note that every curve contracted by $f$ is orthogonal to every divisor in $\mcal F$, hence all such curves are proportional in $N_1(X)_\R$. Let $R\subseteq N^1(X)_\R$ denote the ray that they span. 

If $f$ is an isomorphism in codimension $1$, then so are $\vphi$ and $f^+$ as $\vphi$ is a contraction. 

If there exists an $f$-exceptional prime divisor $E$, then by the Negativity lemma \cite[Lemma 3.39]{KM98}, we have $E\cdot C<0$ for every curve $C$ contracted by $f$. Thus $C\subseteq E$, and so the exceptional locus of $f$ equals $E$. 

Next we have:

\begin{cla}\label{claim:pullback}
Let $M$ be a $\Q$-divisor on $X$ such that $M\equiv_f 0$. Then $M\sim_{\Q} f^*M_Y$ for some $\Q$-Cartier $\Q$-divisor $M_Y$ on $Y$. 
\end{cla}
This is the crucial part of the proof which uses gen divisors. Before proving the claim, let us show how it immediately implies the theorem.

Assume first that $f$ is an isomorphism in codimension $1$, and we need to show that $X^+$ is $\Q$-factorial. Consider a Weil divisor $P$ on $X^+$, and let $P'$ be its proper transform on $X$. Since $X$ is $\Q$-factorial, the divisor $P'$ is $\Q$-Cartier, and we pick any $\Q$-divisor $G \in \mcal{C}_j$. Since all the curves contracted by $f$ belong to $R$, there exists a rational number $\alpha$ such that $P'\equiv_f\alpha G$. By Claim \ref{claim:pullback}, there exists a $\Q$-Cartier $\Q$-divisor $D$ on $Y$ such that $P'\sim_\Q\alpha G+f^* D$. By the definition of $\vphi$, the divisor $f_*G$ is ample, hence $\Q$-Cartier. Therefore, pushing forward this relation by $\varphi$, we obtain that the divisor
$$P\sim_\Q \alpha f_*G+(f^+)^*D$$
is $\Q$-Cartier.

Now, if $f$ contracts a divisor $E$, we need to show that $Y$ is $\Q$-factorial. Let $P$ be any Weil divisor on $Y$, and let $P'$ be its proper transform on $X$. Then $P'$ is $\Q$-Cartier, and since all the curves contracted by $f$ belong to $R$, there exists a rational number $\alpha$ such that $P'\equiv_f \alpha E$. But then, by Claim \ref{claim:pullback} there exists a $\Q$-Cartier $\Q$-divisor $D$ on $Y$ such that $P'\sim_\Q\alpha E+f^*D$. Pushing forward this relation by $f$, we obtain that the divisor $P\sim_\Q D$ is $\Q$-Cartier.

It remains to prove Claim \ref{claim:pullback}. First, note that we can find $\Q$-divisors $B_1,\dots,B_r$ in the relative interior of $\mcal F$ such that $M\equiv \sum \lambda_i B_i$ for some nonzero rational numbers $\lambda_i$: indeed, by assumption, the set $\pi(\mcal F)$ spans the hyperplane in $N^1(X)_\R$ which is orthogonal to $R$, hence $M$ belongs to this hyperplane. Note that all $B_i$ are semiample by Proposition \ref{cor:5}. Denote 
$$B_1'=\frac{1}{\lambda_1}\textstyle\big(M-\sum_{i\geq2} \lambda_i B_i\big).$$
Then $B'_1\equiv B_1$, and observe that $B_1'$ is also semiample by Lemmas \ref{lem:numericalBig} and \ref{lem:ords} since $B_1$ is gen. Then by Lemma \ref{lem:equal proj}, there is an isomorphism 
$$\eta \colon Y\to \Proj R(X,B_1')$$
such that $f'= \eta\circ f$, where $f'\colon X\to \Proj R(X,B_1')$. By the definition of $f$, there are ample $\Q$-divisors $A_i$ on $Y$ such that $B_i\sim_\Q f^*A_i$ for all $i\geq 2$, and similarly, $B_1'\sim_\Q(f')^*A_1'$ for an ample $\Q$-divisor $A_1'$ on $\Proj R(X,B_1')$. Therefore $M\sim_\Q f^*M_Y$ for $M_Y=\lambda_1\eta^*A_1'+\sum_{i\geq2} \lambda_i A_i$.
\end{proof}

Finally, as in Section \ref{sec:picture1}, combining everything together yields the following, which is usually called the \emph{geography of (minimal) models}.

\begin{thm}\label{thm:scalingbig}
Let $X$ be a $\Q$-factorial projective variety, and let $D_1,\dots,D_r$ be $\Q$-divisors on $X$. Denote by $\pi\colon\Div_\R(X)\lto N^1(X)_\R$ the natural projection. Assume that the ring $R(X;D_1,\dots,D_r)$ is finitely generated, that $\Supp\mathfrak R$ contains an ample divisor, that $\dim\pi(\Supp\mathfrak R)=\dim N^1(X)_\R$, and that every divisor in the interior of $\Supp\mathfrak R$ is gen. 

Then there is a finite rational polyhedral decomposition
$$\Supp\mathfrak R= \bigcup\mcal{C}_i,$$
together with birational contractions $\vphi_i\colon X\dashto X_i$ to $\Q$-factorial projective varieties $X_i$, such that $X_i$ is a minimal model for every $D\in\mcal C_i$.
\end{thm}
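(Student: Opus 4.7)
The plan is to run an argument in close parallel with Theorem \ref{thm:termination}, using Theorem \ref{lem:nullflip} in place of the classical existence of divisorial contractions and flips. The induction will be a double one: an outer induction on the Picard number $\rho(X)$, and an inner induction on the number of chambers in a decomposition $\Supp\mathfrak R = \bigcup \mcal C_i$ of the type supplied by Theorem \ref{thm:ELMNP}(3). The output chamber decomposition in the conclusion will be (a refinement of) this one, possibly subdivided further to match the nef locus.

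The first step is to apply Proposition \ref{cor:5}(2) to $\mathfrak R$: the cone $\mcal N := \Supp\mathfrak R \cap \pi^{-1}\bigl(\Nef(X)\bigr)$ is rational polyhedral, and every element of $\mcal N$ is semiample. After refining the subdivision so that $\mcal N$ is a union of chambers, for every chamber $\mcal C_i \subseteq \mcal N$ I would simply take $\varphi_i = \id_X$ and $X_i = X$: then any $D \in \mcal C_i$ is semiample on $X$, so $X$ is a minimal model of $D$ in the sense of \eqref{eq:pullpush} (trivially, since no divisor is contracted).

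For a chamber $\mcal C_j$ not contained in $\mcal N$, I would march outward from $\mcal N$ one codimension-one face at a time. Choose a face $\mcal F$ of $\mcal N$ that meets the interior of $\Supp\mathfrak R$ and separates the interior of $\mcal N$ from $\mcal C_j$. A general interior point of $\mcal F$ is semiample and defines a birational contraction $f: X \to Y$, and Theorem \ref{lem:nullflip} applies to this setup. In the divisorial case $(1)$, the variety $Y$ is $\Q$-factorial with $\rho(Y) = \rho(X) - 1$, and the pushforward divisors $f_*D_1, \dots, f_*D_r$ generate a divisorial ring on $Y$ satisfying the hypotheses of the theorem; the outer induction on $\rho$ closes this case. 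In the flipping case $(2)$, we pass to the $\Q$-factorial $X^+$ via the small map $\varphi$, and transport the chamber decomposition on $\Supp\mathfrak R$ to one on $\varphi_*\Supp\mathfrak R \subseteq \Div_\R(X^+)$ exactly as in the proof of Theorem \ref{thm:termination}. Crucially, on $X^+$ the chamber just across $\mcal F$ now lies on the \emph{nef} side of the wall containing $\mcal F$, so the number of chambers outside the nef region strictly decreases, and the inner induction closes this case.

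The main obstacle will be verifying that the four standing hypotheses of the theorem really are preserved under both elementary steps. Finite generation and maximality of the image in $N^1$ are clear from the pushforward construction, and the existence of an ample element on the new variety holds because after the step the previously-crossed chamber becomes nef (and in fact contains an ample divisor in its interior, by a Lemma \ref{lem:equal proj}-type argument combined with the explicit description of the contraction in Theorem \ref{lem:nullflip}). The genuinely delicate point is the preservation of the gen property of divisors in the interior of the transported support: this is where Lemma \ref{lem:equal proj} and the pullback identification proved inside Theorem \ref{lem:nullflip} do the work, by matching numerically equivalent divisors on $X^+$ (respectively $Y$) with numerically equivalent divisors on $X$ whose section rings coincide with those on the new model. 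Once these invariance statements are established, concatenating the finitely many elementary contractions encountered along any path from $\mcal N$ to $\mcal C_j$ yields a birational contraction $\varphi_j: X \dashto X_j$ with $X_j$ $\Q$-factorial, and Lemma \ref{lem:equal proj} together with Theorem \ref{thm:decomposition}(1) shows that the resulting $X_j$ and $\varphi_j$ depend only on the chamber $\mcal C_j$, and that $X_j$ is a minimal model for every divisor in $\mcal C_j$.
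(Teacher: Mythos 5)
Your plan coincides with the paper's own treatment: the theorem is obtained there by exactly this double induction (on the Picard number and on the number of chambers of the decomposition from Theorem \ref{thm:ELMNP}), using Proposition \ref{cor:5} for the nef chambers, Theorem \ref{lem:nullflip} for the elementary divisorial and small modifications, and the transport of the chamber structure as in Theorem \ref{thm:termination}, with the remaining verifications (notably preservation of the gen hypothesis and of finite generation after each step) deferred to \cite[Theorem 5.4]{KKL12}. You correctly single out that preservation step as the delicate point, so your sketch is sound at essentially the same level of detail as the argument in the paper.
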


\section{The Cone conjecture}\label{sec:CY}

My main goal in this section is to convince you that several important conjectures and theories are related: the Cone conjecture for Calabi-Yau manifolds, the Abundance conjecture, finiteness of minimal models up to isomorphisms, and the theory of Mori Dream Spaces. Taken together, we will see in Proposition \ref{pro:conjecturesimply} that they, at least morally, form a consistent picture.

Recall that according to the Minimal Model Program, starting with a variety $X$ with mild singularities on which $K_X$ is pseudo-effective, we expect that there exists a birational map $\varphi\colon X\dashrightarrow Y$ such that the divisor $K_Y$ is semiample. In particular, either $\kappa(X,K_X)=0$, or there exists a fibration $\theta\colon Y\to Z$ such that for the generic fibre $F$ we have $\kappa(F,K_F)=0$. Thus, when $\kappa(X,K_X)\geq1$, we can study the geometry of $Y$ via the geometry of the target $Z$ and that of the generic fibre $F$. Similarly, when $\kappa(X,K_X)={-}\infty$, we know that there exists a $K_X$-MMP which terminates with a variety $Y$ which has a Mori fibre space structure over a lower dimensional base $Z$; in particular, the general fibre of the map $Y\to Z$ is a Fano variety.

Therefore, conjecturally, the study of algebraic varieties splits into three distinct cases: when $K_X$ is either ample, anti-ample, or a torsion divisor. Much is known about the geometry (at least of moduli) in the first two cases. The third case, which I here call \emph{varieties of Calabi-Yau type}, form a rich and extensively studied class. 

\begin{rem}
Note that there are many definitions of a \emph{Calabi-Yau manifold}. Most often, a Calabi-Yau manifold is a smooth projective variety with $\omega_X\simeq\OO_X$ and $H^1(X,\OO_X)=0$. Sometimes it is required that additionally $X$ is simply connected and $H^i(X,\OO_X)=0$ for all $1\leq i\leq\dim X$.
\end{rem}

One of the basic questions in the classification of varieties is how many minimal models a (say, smooth) variety $X$ can have. The answer is known to be finite when $X$ is of general type by \cite{BCHM}. If $X$ is not of general type, there are known examples when this number is infinite. However, there is a conjecture that this number is \emph{finite up to isomorphism}, which means that we ignore birational identifications with $X$. 

Consider a variety $X$ of Calabi-Yau type, and denote by ${\Aut}(X)$ the automorphism group and by ${\Bir}(X)$ the group of birational automorphisms. Note that every element of $\Bir(X)$ is an automorphism in codimension $1$, which is an easy consequence of the Negativity lemma \cite[Lemma 3.39]{KM98}. We have a natural homomorphism
$$ r\colon \Bir(X) \to {\GL}(N^1(X)) $$
given by $g\mapsto g^*$. We set $\mathcal A(X) = r\big(\Aut(X)\big)$ and $\mathcal B(X) = r\big(\Bir(X)\big)$. 

\begin{rem}
In general, on a variety $X$ it is more convenient in our context below to consider the group $\PsAut(X)$ of pseudo-automorphisms acting on $N^1(X)$ instead of the group of birational isomorphisms $\Bir(X)$: here, elements of $\PsAut(X)$ are birational automorphisms which are isomorphisms in codimension $1$. Then we denote $\mcal P(X)=r\big(\PsAut(X)\big)$. 
\end{rem}

It is a basic question, interesting on its own, how $\Aut(X)$ and $\Bir(X)$, or equivalently $\mcal A(X)$ and $\mcal B(X)$, act on certain cones in $N^1(X)_\R$. The first thing to notice is that $\mcal B(X)$ preserves the effective cone $\Eff(X)$ and the movable cone $\overline{\Mov}(X)$, and that $\mcal A(X)$ preserves the nef cone $\Nef(X)$. 

A more precise answer is suggested by the following \emph{Cone conjecture}. But first we need a definition.

\begin{dfn}
Let $V$ be a real vector space equipped with a rational structure, and let $\mcal C$ be a cone in $V$. Let $\Gamma$ be a subgroup of $\GL(V)$ which preserves $\mcal C$. There a rational polyhedral cone $\Pi\subseteq\mcal C$ is a \emph{fundamental domain} for the action of $\Gamma$ on $\mcal C$ if the following holds:
\begin{enumerate}
\item $\mcal C=\bigcup_{g\in\Gamma}g\Pi$,
\item $\inte\Pi\cap \inte g\Pi=\emptyset$ if $g\neq\id$.
\end{enumerate}
\end{dfn}

\begin{con}
Let $X$ be a variety of Calabi-Yau type.
\begin{enumerate}
\item There exists a rational polyhedral cone $\Pi$ which is a fundamental domain for the action of $\mcal A(X)$ on $\Nef(X)\cap\Eff(X)$.
\item There exists a rational polyhedral cone $\Pi'$ which is a fundamental domain for the action of $\mcal B(X)$ on $\overline{\Mov}(X)\cap\Eff(X)$.
\end{enumerate}
\end{con}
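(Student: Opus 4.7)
The plan is to reduce the Cone conjecture to two better-understood inputs: a chamber decomposition of the effective movable cone into nef chambers of small $\Q$-factorial modifications of $X$, and the conjectural finiteness of minimal models up to isomorphism.

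Since $K_X\equiv 0$, the Negativity lemma gives $\Bir(X)=\PsAut(X)$, so $\mcal B(X)=\mcal P(X)$ and for part (2) everything takes place inside $\overline{\Mov}(X)\cap \Eff(X)$. The first step is to show that this cone is covered by the translates $\vphi^*\Nef(Y)$, where $\vphi\colon X\dashto Y$ runs over small $\Q$-factorial modifications of $X$; granting appropriate finite generation, this follows from the geography of models (Theorem \ref{thm:scalingbig}) applied to a finitely generated subring of the Cox-type ring of $X$ whose support contains a prescribed $D\in\overline{\Mov}(X)\cap\Eff(X)$, together with Theorem \ref{thm:decomposition}. The group $\mcal P(X)$ acts by permuting chambers, and two chambers $\vphi_1^*\Nef(Y_1)$, $\vphi_2^*\Nef(Y_2)$ lie in the same orbit exactly when $Y_1\simeq Y_2$.

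Assuming finiteness of minimal models of $X$ up to isomorphism, there are then finitely many $\mcal P(X)$-orbits of chambers, so picking a representative from each orbit and taking their union produces a candidate $\Pi'$. The two defining properties of a fundamental domain both follow from the chamber structure and Theorem \ref{thm:decomposition}(3). For part (1), the same recipe works on the single chamber $\Nef(X)\cap\Eff(X)$, on which $\Aut(X)$ acts; this is consistent with the Abundance conjecture, which in the Calabi--Yau setting predicts that every pseudo-effective nef class is semiample and hence gives a contraction, and it is these contractions that $\Aut(X)$ should permute with only finitely many orbits.

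The main obstacle is rational polyhedrality of the candidates $\Pi$ and $\Pi'$. The individual chambers $\vphi^*\Nef(Y)$ are rational polyhedral only if $Y$ is essentially a Mori Dream Space in the sense of Section \ref{sec:picture2}; but by the logic behind Corollary \ref{cor:MDS} applied in reverse, a Calabi--Yau variety is typically \emph{not} a Mori Dream Space, because its Cox ring is generally not finitely generated. Thus the polyhedral structure cannot come from the MMP alone; one must inject arithmetic information about the action of $\mcal P(X)$ on $N^1(X)$, presumably via a Looijenga-type criterion stating that a discrete group acting on a cone covered by rational polyhedral subcones admits a rational polyhedral fundamental domain provided suitable self-duality is available. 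Producing such an arithmetic input in general --- beyond the known cases of K3 surfaces (Sterk), abelian varieties, and some hyperk\"ahler manifolds --- is where the real difficulty lies.
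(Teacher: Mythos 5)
The statement you are trying to prove is the Morrison--Kawamata Cone conjecture, and the paper does not prove it: it is presented as an open conjecture, with only partial evidence --- the convex-geometry Proposition \ref{thm:Looijenga} (which settles the case where the relevant cone is already rational polyhedral, e.g.\ Mori Dream Spaces), Theorem \ref{thm:CY} (local rational polyhedrality of $\Nef(X)$ and $\overline{\Mov}(X)$ only inside the \emph{big} cone), and the consistency statement Proposition \ref{pro:conjecturesimply}. So there is no ``paper proof'' to match, and your proposal cannot be counted as a proof either: you yourself concede at the end that the decisive input (a Looijenga-type arithmetic statement producing a rational polyhedral fundamental domain) is missing in general. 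That missing step is not a technicality; it is precisely the content of the conjecture, which is known only in special cases (surfaces, abelian varieties, certain hyperk\"ahler and Picard-number-two examples).

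Beyond this, several intermediate steps in your reduction are themselves either open or incorrectly oriented. The chamber decomposition of $\overline{\Mov}(X)\cap\Eff(X)$ into pullbacks of nef cones of small $\Q$-factorial modifications needs finite generation for divisors that are \emph{not} big; Theorem \ref{cor:CY finitely generated} and Theorem \ref{thm:scalingbig} only cover (subcones of) the big cone, and extending to the boundary is essentially Conjecture \ref{con:finitegeneration}/Abundance. You also invoke finiteness of minimal models up to isomorphism as an input, whereas the paper records the implication in the opposite direction: the (relative) Cone conjecture together with Abundance is expected to \emph{imply} that finiteness, so your argument risks circularity. Finally, for part (1) the ``same recipe on the single chamber $\Nef(X)\cap\Eff(X)$'' cannot work as stated: when $\mcal A(X)$ is infinite, the whole cone is never a fundamental domain (the interior-disjointness condition fails for every $g\neq\id$), and that cone need not be rational polyhedral at all, cf.\ Example \ref{ex:Oguiso}; even with finitely many orbits of chambers, a union of orbit representatives need not satisfy the fundamental-domain conditions without a stabiliser analysis of the kind carried out in Proposition \ref{thm:Looijenga}, which requires polyhedrality from the start.
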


The first part of the conjecture was formulated by Morrison \cite{Mor93} and was inspired by developments in mirror symmetry. It was extended to the statement about the movable cone by Kawamata \cite{Kaw97}, and there is a formulation which involves klt pairs and pseudo-automorphisms in Totaro's paper \cite{Tot10}. 

Kawamata also formulated the conjecture in the relative setting, i.e.\ when there exists a fibration $X\to S$ such that $K_X\equiv_S0$, in which case one should consider the groups $\Aut(X/S)$ and $\Bir(X/S)$, and the relative cones over $S$ instead. A positive answer to this form of the conjecture together with the Abundance conjecture would, in particular, give a positive answer to the conjecture stated above about finiteness of minimal models up to isomorphism. This gives the main motivation for the Cone conjecture.

Before I give some more motivation, let me briefly survey what is known (there are several papers which give a detailed history of the problem and the state of the art, see for instance \cite{Tot10b}). The starting point is the proof of the conjecture on Calabi-Yau surfaces by Sterk, Looijenga, Namikawa and Kawamata \cite{Ste85,Nam85,Kaw97}. This uses fully the global Torelli theorem for K3s. This was generalised by Totaro \cite{Tot10} to klt Calabi-Yau pairs -- the proof reinterprets the problem by using hyperbolic geometry. For abelian varieties, the proof is in \cite{PS10}. Finiteness of minimal models was proved for a class of holomorphic symplectic 4-folds in \cite{HT10}, and a version for the movable cone on projective holomorphic symplectic manifolds in \cite{Mar11}. Oguiso \cite{Og11} gave a proof of the conjecture for the movable cone of generic hypersurfaces of multi-degree $(2,\dots,2)$ in $(\PS^1)^n$ for $n\geq4$. Kawamata \cite{Kaw97} gave a proof of (a weaker form of) the relative version of the conjecture when $X\to S$ is a $3$-fold over a positive-dimensional base. This, in particular, showed that if $X$ is a $3$-fold with positive Kodaira dimension, then the number of its minimal models is finite up to isomorphisms. Finally, the conjecture was confirmed for Calabi-Yau $n$-folds with Picard number $2$ and infinite group $\Bir(X)$ in \cite{LP12}. 

\subsection*{Further motivation}
%We start with Looijenga's result \cite[Theorem 4.1, Application 4.15]{Loo09}. The result belongs completely to the realm of convex geometry; however, we will see that it has far-reaching consequences in our situation.
%
%\begin{thm}\label{thm:Looijenga}
%Let $V$ be a real vector space equipped with a rational structure $V(\Q)$, and let $L$ be a lattice in $V$. Let $\mcal C$ be an open cone in $V$. Let $\Gamma$ be a subgroup of $\GL(V)$ which preserves $L$ and $\mcal C$. Let $\mcal C_+$ denote the convex hull of the set $\overline{\mcal C}\cap V(\Q)$. Assume that there exists a polyhedral cone $\Pi$ in $\mcal C_+$ with $\mcal C\subseteq\Gamma\cdot\Pi$. 
%
%Then $\Gamma\cdot\Pi=\mcal C_+$, and there exists a rational polyhedral fundamental domain for the action of $\Gamma$ on $\mcal C_+$.
%\end{thm}
%
%In our situation, $V$ is the N\'eron-Severi space $N^1(X)_\R$ with the standard lattice $L$ given by the N\'eron-Severi group $N^1(X)$ and the induced rational structure. The pair $(\mcal C,\Gamma)$ will be either $(\Amp(X),\mcal A(X))$ or $(\Mov(X),\mcal B(X))$. But first we notice the following.

We start with the following result from convex geometry.

\begin{pro}\label{thm:Looijenga}
Let $V$ be a finite dimensional real vector space equipped with a rational structure, and let $L$ be a lattice in $V$. Let $\mcal C$ be a rational polyhedral cone in $V$ of dimension $\dim V$. Let $\Gamma$ be a subgroup of $\GL(V)$ which preserves $L$ and $\mcal C$. 

Then $\Gamma$ is a finite group, and there exists a rational polyhedral fundamental domain for the action of $\Gamma$ on $\mcal C$.
\end{pro}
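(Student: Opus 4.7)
The plan has two parts: first, show $\Gamma$ is finite by its action on the extremal rays of $\mcal C$; second, build a rational polyhedral fundamental domain as the intersection of $\mcal C$ with a Dirichlet domain of a carefully chosen rational base point.

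For finiteness, I would enumerate the finitely many extremal rays of $\mcal C$, say $\R_+v_1,\dots,\R_+v_n$, where each $v_i\in L$ is the primitive lattice generator of its ray. Any $\gamma\in\Gamma$ permutes the extremal rays, and, preserving $L$, must send each primitive generator $v_i$ to the primitive generator $v_{\sigma(i)}$ of the image ray (and not to $-v_{\sigma(i)}$, which does not lie in $\mcal C$). This yields a homomorphism $\Gamma\to S_n$. Since $\mcal C$ has full dimension, the $v_i$ span $V$, so the kernel of this homomorphism is trivial; hence $\Gamma\hookrightarrow S_n$ and $\Gamma$ is finite.

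For the fundamental domain, I would next average an arbitrary rational inner product over the (now finite) group $\Gamma$ to produce a $\Gamma$-invariant rational inner product $\langle\cdot,\cdot\rangle$ on $V$. Each non-identity $\gamma\in\Gamma$ has a proper rational fixed subspace $\ker(\gamma-\id)\subsetneq V$, so the union of these subspaces does not cover $\inte\mcal C$; by density of rational points I would choose a rational point $p\in\inte\mcal C$ with trivial $\Gamma$-stabilizer. Define
$$\Pi \;=\; \bigl\{x\in\mcal C : \langle x,\gamma p-p\rangle\leq 0 \text{ for all } \gamma\in\Gamma\bigr\}.$$
Because $\|\gamma p\|=\|p\|$ by $\Gamma$-invariance of the inner product, each defining linear inequality is equivalent to $\|x-p\|^{2}\leq\|x-\gamma p\|^{2}$; thus $\Pi$ is $\mcal C$ intersected with the Dirichlet (nearest-point) domain of $p$. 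Since $\mcal C$ is rational polyhedral and all the remaining data are rational, $\Pi$ is a rational polyhedral cone.

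Finally I would verify the two fundamental-domain properties. Given $x\in\mcal C$, picking $\gamma\in\Gamma$ that minimizes $\|\gamma x-p\|$ gives $\gamma x\in\Pi$, so $\mcal C=\bigcup_{\gamma}\gamma\Pi$. For interior-disjointness, if $y\in\inte\Pi\cap\gamma_0\inte\Pi$ with $\gamma_0\neq\id$, then the strict inequalities for $\inte\Pi$ applied to $y$ and to $\gamma_0^{-1}y$, combined with $\Gamma$-invariance of the norm, force both $\|y-p\|<\|y-\gamma_0 p\|$ and $\|y-\gamma_0 p\|<\|y-p\|$, a contradiction. I expect the main subtlety to be keeping everything rational throughout, which is precisely why the rational averaging of the inner product and the rational choice of $p$ are crucial; without these the Dirichlet construction would only produce a polyhedral, but possibly irrational, fundamental domain.
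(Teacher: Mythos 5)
Your proof is correct and takes essentially the same route as the paper: finiteness via the permutation action on the primitive generators of the extremal rays (which span $V$ since $\mcal C$ is full-dimensional), followed by a Dirichlet-domain construction for a $\Gamma$-averaged invariant inner product. The only difference is minor: the paper obtains a stabilizer-free base point by minimizing the stabilizer order and a small-ball argument, whereas you pick a rational point of $\inte\mcal C$ off the finitely many proper fixed subspaces $\ker(\gamma-\id)$, which in fact makes the rationality of $\Pi$ slightly more explicit than in the paper's write-up.
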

\begin{proof}
Let $\delta_1,\dots,\delta_r$ be \emph{primitive classes} on the extremal rays of the cone $\mcal C$ (in the sense that they are integral classes not  divisible in $L$). Then any element $g\in\Gamma$ permutes these $\delta_i$: this follows since $g$ preserves $\mcal C$, and it sends a primitive class to a primitive class. Therefore, $\Gamma$ is finite.

The proof of existence of a rational polyhedral fundamental domain is a bit more involved. For every point $x\in V$, let $\Sigma_x$ denote the stabiliser of $x$ in $\Gamma$. Pick a point $x_0\in\mcal C$ such that for every $z\in\mcal C$ we have $|\Sigma_{x_0}|\leq|\Sigma_z|$. Then $\Sigma_{x_0}$ is actually trivial. Indeed, there exists $0<\varepsilon\ll1$ such that if $B(x_0,\varepsilon)$ is the $\varepsilon$-ball around $x_0$ (in the standard norm), then the sets $g\big(B(x_0,\varepsilon)\cap\mcal C\big)$ are pairwise disjoint for $g\notin\Sigma_{x_0}$. By the choice of $x_0$, this implies that $|\Sigma_{x_0}|=|\Sigma_z|$ for every $z\in B(x_0,\varepsilon)\cap\mcal C$. Hence, for every $g\in\Sigma_{x_0}$ we have that $g$ stabilises $B(x_0,\varepsilon)\cap\mcal C$, and thus $g=\id$ since there exists a basis of $V$ which belongs to $B(x_0,\varepsilon)\cap\mcal C$.

If $\langle\,\,,\,\rangle$ denotes the standard scalar product on $V\simeq\R^N$, for every $x,y\in V$ set
$$d(x,y)=\sum_{g\in\Gamma}\langle gx,gy\rangle.$$
Then it is easy to check that $d\colon V\times V\to\R$ is a scalar product, and that $d(x,y)=d(gx,gy)$ for every $x,y\in V$ and every $g\in\Gamma$. Let
$$\Pi=\{x\in\mcal C\mid d(x,x_0)\leq d(x,gx_0)\text{ for every }g\in\Gamma\}.$$
Then $\Pi$ is cut out from $\mcal C$ by rational half-spaces, and hence $\Pi$ is a rational polyhedral cone. I claim that $\Pi$ is a fundamental domain for the action of $\Gamma$ on $\mcal C$. Indeed, take any $w\in\mcal C$. Then there exists $h\in\Gamma$ such that $d(w,hx_0)\leq d(w,gx_0)$ for every $g\in\Gamma$. This is equivalent to 
$$d(h^{-1}w,x_0)\leq d(h^{-1}w,h^{-1}gx_0)$$
for every $g\in\Gamma$, and hence $h^{-1}w\in\Pi$. Therefore, $\mcal C=\bigcup_{g\in\Gamma}g\Pi$. Since $\Sigma_{x_0}=\{\id\}$, we have $\inte\Pi\cap\inte g\Pi=\emptyset$ unless $g=\id$ by definition of $\Pi$. This completes the proof.
\end{proof}

In our situation, $V$ is the N\'eron-Severi space $N^1(X)_\R$ with the standard lattice $L$ given by the N\'eron-Severi group $N^1(X)$ and the induced rational structure. Then the immediate consequence is the following.

\begin{cor}
Let $X$ be a projective variety such that the cone $\Nef(X)\cap\Eff(X)$ is rational polyhedral. Then the group $\mcal A(X)$ is finite, and there exists a fundamental domain for the action of $\mcal A(X)$ on $\Nef(X)\cap\Eff(X)$. 

Similarly, if the cone $\overline{\Mov}(X)\cap\Eff(X)$ is rational polyhedral, then the group $\mcal P(X)$ is finite, and there exists a fundamental domain for the action of $\mcal P(X)$ on $\overline{\Mov}(X)\cap\Eff(X)$. 

In particular, the conclusions above hold if $X$ is a Mori Dream Space.
\end{cor}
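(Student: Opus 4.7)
The plan is to apply Proposition \ref{thm:Looijenga} in each case, with the N\'eron-Severi lattice $N^1(X)\subseteq N^1(X)_\R$ providing the rational structure. The only preliminary issue is that Proposition \ref{thm:Looijenga} requires the cone to have full dimension in the ambient vector space, so I would restrict attention to the span of the cone in question.

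For the first claim, let $V_1 := \mathrm{span}_\R\bigl(\Nef(X)\cap\Eff(X)\bigr)$; this is a rational subspace of $N^1(X)_\R$ since $\Nef(X)\cap\Eff(X)$ is rational polyhedral, and $L_1 := V_1\cap N^1(X)$ is a lattice in it. The pullback action of $\Aut(X)$ preserves integrality of divisors, intersection numbers with curves (hence nefness), and effectivity, so $\mcal A(X)\subseteq\GL(V_1)$ preserves both $L_1$ and the full-dimensional rational polyhedral cone $\Nef(X)\cap\Eff(X)\subseteq V_1$. Proposition \ref{thm:Looijenga} then gives at once both the finiteness of $\mcal A(X)$ and a rational polyhedral fundamental domain for its action. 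The second claim follows by exactly the same procedure applied to $V_2 := \mathrm{span}_\R\bigl(\overline{\Mov}(X)\cap\Eff(X)\bigr)$ and $\mcal P(X)$: the only new point is that a pseudo-automorphism, being an isomorphism in codimension $1$, induces a well-defined lattice automorphism of $N^1(X)$ that preserves both movability and effectivity.

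For the Mori Dream Space assertion, I need to check that both cones are rational polyhedral. The cone $\Nef(X)$ is rational polyhedral by Definition \ref{dfn:25}(2); moreover on a Mori Dream Space every nef class is semiample and hence effective, so $\Nef(X)\cap\Eff(X)=\Nef(X)$. By Definition \ref{dfn:25}(3), $\overline{\Mov}(X)$ is a finite union of the rational polyhedral cones $f_i^*\bigl(\Nef(X_i)\bigr)$, and the proof of Theorem \ref{thm:MDSfingen} shows that $\Eff(X)=\overline{\Eff}(X)$ is rational polyhedral. Hence $\overline{\Mov}(X)\cap\Eff(X)$ is the intersection of rational polyhedral cones and so is itself rational polyhedral, and both statements of the corollary apply.

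The main technical point, beyond the pure convex geometry of Proposition \ref{thm:Looijenga}, is the verification that $\mcal P(X)$ really acts by lattice automorphisms on $N^1(X)$ and preserves the movable effective cone: this is precisely the reason for working with $\PsAut(X)$ rather than the full birational automorphism group $\Bir(X)$, since pseudo-automorphisms do not contract or extract any divisor and so induce honest linear operations on $N^1(X)$.
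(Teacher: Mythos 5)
Your approach is essentially the paper's: both reduce everything to Proposition \ref{thm:Looijenga}, with $N^1(X)$ supplying the lattice, and your verification of the Mori Dream Space case (nef implies semiample implies effective, so $\Nef(X)\cap\Eff(X)=\Nef(X)$; $\overline{\Mov}(X)$ and $\Eff(X)=\overline{\Eff}(X)$ rational polyhedral) is fine.

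One wrinkle in your handling of the full-dimensionality hypothesis, though. You restrict to $V_1=\mathrm{span}_\R\bigl(\Nef(X)\cap\Eff(X)\bigr)$ and then invoke the proposition for ``$\mcal A(X)\subseteq\GL(V_1)$''; but $\mcal A(X)$ is by definition a subgroup of $\GL(N^1(X))$, and if $V_1$ were a proper subspace you would only obtain finiteness of the \emph{image} of $\mcal A(X)$ in $\GL(V_1)$, and condition (2) in the definition of a fundamental domain could fail for a nontrivial element acting trivially on $V_1$. The cleaner observation, which is the paper's side remark, is that a rational polyhedral cone is closed, and $\Nef(X)\cap\Eff(X)$ contains the ample cone (ample classes are effective), so it equals $\Nef(X)$ and is automatically full-dimensional; similarly $\overline{\Mov}(X)\cap\Eff(X)=\overline{\Mov}(X)$ under the hypothesis of the second statement. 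With that remark the span device is unnecessary and the proposition applies directly on $N^1(X)_\R$, so your argument closes up with a one-line fix; the rest, including the point that $\PsAut(X)$ acts by lattice automorphisms preserving movability and effectivity, is as in the paper.
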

\begin{proof}
This is straightforward from Proposition \ref{thm:Looijenga}. As a side remark, note that if $\Nef(X)\cap\Eff(X)$ is a rational polyhedral cone, then this cone is equal to its closure, which must be $\Nef(X)$; similarly, we have $\overline{\Mov}(X)\cap\Eff(X)=\overline{\Mov}(X)$ if $\overline{\Mov}(X)\cap\Eff(X)$ is a rational polyhedral cone. 
\end{proof}

As a special case of the previous result, we have that ``on a Fano manifold the Cone conjecture holds''. As Totaro points out in \cite{Tot10b}, we can think of Calabi-Yau manifolds as ``just beyond" Fanos. Of course, Calabi-Yaus behave less well than Fanos: for instance, it is not too difficult to construct examples of Calabi-Yaus for which the nef or the movable cone are not rational polyhedral; one such convenient example is in Example \ref{ex:Oguiso}. However, the Cone conjecture gives a description of these cones which is the best that we can ever hope for, which is one additional motivation for it, at least philosophically. 

\begin{exa}\label{ex:Oguiso}
This is \cite[Proposition 1.4]{Og12}. Let $X$ be the intersection of general hypersurfaces in $\PS^3\times\PS^3$ of bi-degrees $(1,1)$, $(1,1)$, and $(2,2)$. Then $X$ is a Calabi-Yau $3$-fold $X$ of Picard number $2$, and it has the following properties: the boundary rays of the pseudo-effective cone (which, in this case, is the same as the movable cone) are both irrational, and the group $\Bir(X)$ is an infinite group.
\end{exa}

Let us see how much of the properties of Fanos we can recover on Calabi-Yaus. The start for our discussion is the following.

\begin{thm}\label{cor:CY finitely generated}
Let $X$ be a variety of Calabi-Yau type, and let $B_1,\dots,B_q$ be big $\Q$-divisors on $X$. Then the ring $R(X;B_1,\dots,B_q)$ is finitely generated. 
\end{thm}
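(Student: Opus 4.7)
The plan is to reduce the statement to the finite generation of an adjoint ring (Theorem \ref{thmA}), exploiting the fact that on a variety of Calabi-Yau type the canonical class is torsion, and hence ``can be absorbed for free'' into the graded ring.

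First, since each $B_i$ is big, it is $\Q$-linearly equivalent to an effective $\Q$-divisor. After replacing the $B_i$ by such effective representatives (the multi-graded rings are identified via multiplication by the rational functions witnessing these $\Q$-linear equivalences, combined with Lemma \ref{lem:3}(1) to clear common denominators), I may assume every $B_i$ is effective. Since $X$ is klt (the natural hypothesis for varieties of Calabi-Yau type), I can choose a sufficiently small rational $\varepsilon>0$ so that every pair $(X,\varepsilon B_i)$ is klt; the divisor $\varepsilon B_i$ remains big, and hence Theorem \ref{thmA} yields that the adjoint ring
$$\mathfrak R' = R\big(X; K_X + \varepsilon B_1, \ldots, K_X + \varepsilon B_q\big)$$
is finitely generated.

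The key step now is to feed in the Calabi-Yau hypothesis. Since $K_X$ is torsion, I fix a positive integer $m$ with $mK_X \sim 0$, i.e.\ $mK_X = \ddiv f$ for some $f\in k(X)^{\times}$. By Lemma \ref{lem:3}(1) the ring $R\big(X; m(K_X+\varepsilon B_1), \ldots, m(K_X+\varepsilon B_q)\big)$ is still finitely generated. On the multi-graded piece indexed by $(n_1,\dots,n_q)$, the divisor $\sum n_i\, m(K_X+\varepsilon B_i)$ equals $\big(m\sum n_i\big)K_X + m\varepsilon \sum n_i B_i$, and since $(m\sum n_i)K_X = \ddiv f^{\sum n_i}$, multiplication by $f^{\sum n_i}$ realizes a linear equivalence with $m\varepsilon \sum n_i B_i$. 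These maps are compatible with the multiplicative structure, hence they splice together into a ring isomorphism onto $R(X; m\varepsilon B_1, \ldots, m\varepsilon B_q)$. A final application of Lemma \ref{lem:3}(1) to rescale by $(m\varepsilon)^{-1}$ then gives finite generation of $R(X;B_1,\dots,B_q)$.

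The only non-formal input in this scheme is Theorem \ref{thmA}; the rest is bookkeeping about $\Q$-linear equivalence. The step I would pay attention to is the transition from the adjoint ring to the ring of the $B_i$'s: this is precisely where the Calabi-Yau hypothesis (that $K_X$ is torsion, not merely numerically trivial) enters, and it is the only place where a weakening of the Calabi-Yau assumption would cause the argument to break down.
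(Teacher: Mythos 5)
Your proposal is correct and follows essentially the same route as the paper: reduce to effective representatives, perturb to klt pairs $(X,\varepsilon B_i)$, use that $K_X$ is torsion to get $K_X+\varepsilon B_i\sim_\Q\varepsilon B_i$, and conclude by Theorem \ref{thmA} together with Lemma \ref{lem:3}. Your explicit bookkeeping with $mK_X=\ddiv f$ just spells out the $\Q$-linear equivalence that the paper's one-line proof leaves implicit.
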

\begin{proof}
We can assume that each $B_i$ is effective. Let $\varepsilon>0$ be a rational number such that all pairs $(X,\varepsilon B_i)$ are klt, and notice that $K_X+\varepsilon B_i\sim_\Q\varepsilon B_i$. Hence, the ring
$$R(X;B_1,\dots,B_q)$$ 
is finitely generated by Theorem \ref{thmA} and by Lemma \ref{lem:3}.
\end{proof}

We immediately obtain the following.

\begin{thm}\label{thm:CY}
Let $X$ be a variety of Calabi-Yau type.
\begin{enumerate}
\item The cone $\Nef(X)\cap\B(X)$ is locally rational polyhedral in $\B(X)$, and every element of $\Nef(X)\cap\B(X)$ is semiample.
\item The cone $\overline{\Mov}(X)\cap\B(X)$ is locally rational polyhedral in $\B(X)$.
\end{enumerate} 
\end{thm}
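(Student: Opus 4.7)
The plan is to derive both statements by combining Theorem \ref{cor:CY finitely generated} with Proposition \ref{cor:5}, and to promote the ``semiample'' assertion from divisors in $\Supp\mathfrak R$ to classes in $\Nef(X)\cap\B(X)$ by means of Lemmas \ref{lem:numericalBig} and \ref{lem:ords}. Fix a class $\beta\in\Nef(X)\cap \B(X)$. Since $\B(X)$ is open in $N^1(X)_\R$ and $\Nef(X)\cap\B(X)$ contains ample classes, I can pick an ample $\Q$-divisor $A$ and big $\Q$-divisors $B_1,\dots,B_q$ whose classes span a full-dimensional rational polyhedral cone $\mcal C\subseteq \B(X)$ having $\beta$ in its interior. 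By Theorem \ref{cor:CY finitely generated}, the ring $\mathfrak R=R(X;A,B_1,\dots,B_q)$ is finitely generated. Because every big $\Q$-divisor has sections of some positive multiple, $\Supp\mathfrak R=\R_+A+\sum\R_+B_i$; in particular $A\in\Supp\mathfrak R$ and $\pi(\Supp\mathfrak R)=\mcal C$, where $\pi\colon\Div_\R(X)\to N^1(X)_\R$ is the natural projection.

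For (1), by Proposition \ref{cor:5}(2) the cone $\Supp\mathfrak R\cap\pi^{-1}\bigl(\Nef(X)\bigr)$ is rational polyhedral and every element of it is semiample. The image of a rational polyhedral cone under a rational linear map is again rational polyhedral, so
$$\mcal C\cap \Nef(X)=\pi\bigl(\Supp\mathfrak R\cap \pi^{-1}(\Nef(X))\bigr)$$
is a rational polyhedral subcone of $N^1(X)_\R$. Since $\mcal C$ is a neighborhood of $\beta$ inside $\B(X)$, this gives local rational polyhedrality at every $\beta\in\Nef(X)\cap\B(X)$; for points of $\B(X)$ outside the closed set $\Nef(X)$, a neighborhood avoids $\Nef(X)$ entirely, so the statement is vacuous there.

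For the semiampleness claim in (1), take any $\Q$-divisor $D$ with $[D]\in\Nef(X)\cap\B(X)$ and apply the construction above with $\beta=[D]$. Since $\pi(\Supp\mathfrak R)\ni\beta$, there is $D_0\in\Supp\mathfrak R\cap\pi^{-1}(\Nef(X))$ with $[D_0]=[D]$, and Proposition \ref{cor:5}(2) gives that $D_0$ is semiample. Both $D$ and $D_0$ are big, and their section rings are finitely generated by Theorem \ref{cor:CY finitely generated}; by Lemma \ref{lem:numericalBig} we obtain
$$o_\Gamma(D)=o_\Gamma(D_0)=0$$
for every geometric valuation $\Gamma$ over $X$, and hence $D$ is semiample by Lemma \ref{lem:ords}.

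Part (2) is proved analogously: for $\beta\in\overline{\Mov}(X)\cap\B(X)$ pick big $\Q$-divisors $B_1,\dots,B_q$ generating a full-dimensional rational polyhedral cone $\mcal C\subseteq\B(X)$ containing $\beta$ in its interior (no ample is required here), and set $\mathfrak R=R(X;B_1,\dots,B_q)$, again finitely generated by Theorem \ref{cor:CY finitely generated}. Proposition \ref{cor:5}(1) gives that $\Supp\mathfrak R\cap\pi^{-1}\bigl(\overline{\Mov}(X)\bigr)$ is rational polyhedral, and projecting by $\pi$ shows $\mcal C\cap\overline{\Mov}(X)$ is rational polyhedral, hence $\overline{\Mov}(X)\cap\B(X)$ is locally rational polyhedral in $\B(X)$ at $\beta$. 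The main obstacle is the semiampleness part: one must carefully pass from the statement in Proposition \ref{cor:5}(2), which lives in $\Div_\R(X)$, to a statement about all numerical classes in $\Nef(X)\cap\B(X)$. Example \ref{exa:notgen} warns that numerical equivalence does not in general preserve either semiampleness or finite generation of section rings, so the argument above crucially depends on combining Theorem \ref{cor:CY finitely generated} (which supplies finite generation for every big representative) with the numerical invariance of $o_\Gamma$ on big divisors from Lemma \ref{lem:numericalBig}.
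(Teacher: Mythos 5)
Your proposal is correct and follows essentially the same route as the paper: cover a neighbourhood of the given class by the support of a ring $R(X;B_1,\dots,B_q)$ with the $B_i$ big, invoke Theorem \ref{cor:CY finitely generated} together with Proposition \ref{cor:5}, and project to $N^1(X)_\R$. Your additional step via Lemmas \ref{lem:numericalBig} and \ref{lem:ords}, which promotes semiampleness from divisors in $\Supp\mathfrak R$ to every big representative of a nef class, merely makes explicit a point that the paper's sketch leaves implicit.
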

\begin{proof}
Part (1) was first proved in \cite[Theorem 5.7]{Kaw88}. The problem of finding the shape of $\overline{\Mov}(X)\cap\B(X)$ was posed in \cite[Problem 5.10]{Kaw88}. This was solved in \cite[Corollary 2.7]{Kaw97} for $3$-folds, and in \cite[Theorem 3.8]{KKL12} in general.

Let $V$ be a relatively compact subset of the boundary of $\overline{\Nef}(X)\cap\B(X)$, and denote by $\pi\colon \Div_\R(X)\lto N^1(X)_\R$ the natural projection. Then we can choose finitely many big $\mathbb Q$-divisors $B_1,\dots,B_q$ such
that $V\subseteq\pi(\sum_{i=1}^q\mathbb R_+B_i)$. Theorem~\ref{cor:CY finitely generated} implies that the ring  $\mathfrak R=R(X;B_1,\dots,B_q)$ is finitely generated, and hence $\pi^{-1}\big(\overline{\Nef}(X)\big)\cap \Supp\mathfrak R$ is a rational polyhedral cone and its every element is semiample by Corollary \ref{cor:5}. But then $V$ is contained in finitely many rational hyperplanes. This shows (1), and the proof of (2) is similar.
\end{proof}

Let us recall the following known conjecture which generalises Theorem \ref{thmA}.

\begin{con}\label{con:finitegeneration}
Let $X$ be a $\Q$-factorial projective variety, and let $\Delta_1,\dots,\Delta_r$ be $\Q$-divisors such that all pairs $(X,\Delta_i)$ are klt.

Then the adjoint ring
\[
R(X;K_X+\Delta_1,\dots,K_X+\Delta_r)
\]
is finitely generated.
\end{con}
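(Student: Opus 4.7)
The plan is to reduce Conjecture \ref{con:finitegeneration} to Theorem \ref{thmA} by perturbation and a limiting argument. First I would remove trivial components: if $K_X+\Delta_i$ is not pseudo-effective then $H^0(X, m(K_X+\Delta_i))=0$ for all $m\geq 1$, so that generator can be dropped. I would also want to reduce to the case where each $\Delta_i$ is effective, which would use a Nonvanishing-type statement: if $K_X+\Delta_i$ is pseudo-effective and $(X,\Delta_i)$ is klt, replace $\Delta_i$ by a $\Q$-linearly equivalent effective boundary keeping the pair klt.

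Next, I would choose a general ample $\Q$-divisor $A$ such that each pair $(X, \Delta_i + \varepsilon A)$ is klt for all rational $0<\varepsilon\ll 1$. Since $\Delta_i + \varepsilon A$ is then big, Theorem \ref{thmA} applies and the perturbed adjoint ring
\[
\mathfrak{R}_\varepsilon = R(X;K_X+\Delta_1+\varepsilon A,\dots,K_X+\Delta_r+\varepsilon A)
\]
is finitely generated for every rational $\varepsilon>0$. Theorem \ref{thm:ELMNP} then provides a rational polyhedral chamber decomposition of $\Supp \mathfrak{R}_\varepsilon$, and Theorem \ref{thm:finiteness} yields finitely many birational contractions $\varphi_k\colon X\dashrightarrow X_k$ such that $X_k$ is a minimal model for every adjoint divisor in the $k$-th chamber.

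The heart of the argument will be to push this structure down to $\varepsilon=0$. If the chambers and the associated birational maps stabilise for all sufficiently small $\varepsilon$, then each $K_X+\Delta_i$ lies in the closure of some chamber; granting Abundance, the pushforward $(\varphi_k)_*(K_X+\Delta_i)$ is then semiample on the corresponding $X_k$, and finite generation of $R(X, K_X+\Delta_i)$ follows from Example \ref{e_cutkosky}. The full multi-graded statement should then be deduced via Lemma \ref{lem:3}, working on a common log-resolution dominating all the $X_k$ and discarding the exceptional contributions using the Negativity lemma in the style of Claim \ref{claim:pullback}.

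The hard part will be precisely this limit $\varepsilon\downarrow 0$: as $\Delta_i+\varepsilon A$ leaves the big cone, the support $\Supp\mathfrak{R}_\varepsilon$ may degenerate and chamber walls may collapse. Controlling this is essentially equivalent to the Abundance conjecture, so Conjecture \ref{con:finitegeneration} is at least as difficult as abundance for klt pairs. A more attractive route, in the spirit of Section \ref{sec:mmpbeyond}, would be to show by induction on $\dim X$ and via Theorem \ref{thm:scalingbig} that every pseudo-effective adjoint divisor $K_X+\Delta$ with $(X,\Delta)$ klt is automatically \emph{gen}; then the cone $\sum\R_+(K_X+\Delta_i)$ would sit inside the support of an enlarged ring containing ample perturbations, and finite generation would follow from Proposition \ref{cor:5}. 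Either way, any successful proof must cross the line separating the big case from the merely pseudo-effective one, and at present this crossing is open.
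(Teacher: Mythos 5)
The statement you are asked about is Conjecture \ref{con:finitegeneration}: the paper offers no proof of it, and in fact records that it is only known to follow from the full strength of the MMP (termination of arbitrary sequences of flips plus Abundance, via the reference cited after the conjecture), and is a theorem only in dimension at most three. Your proposal, read as a proof, therefore contains a genuine gap, and to your credit you name it yourself: every step past the perturbed rings $\mathfrak R_\varepsilon$ invokes unproven conjectures. Concretely, (i) the reduction ``replace $\Delta_i$ by a $\Q$-linearly equivalent effective boundary'' when $K_X+\Delta_i$ is merely pseudo-effective is a form of the Nonvanishing conjecture, not a known statement; (ii) the stabilisation of the chamber structure of $\Supp\mathfrak R_\varepsilon$ as $\varepsilon\downarrow0$ is exactly the place where Theorem \ref{thmA} and Theorem \ref{thm:ELMNP} lose their grip, because those results require bigness (or finite generation already in hand) and the walls can indeed degenerate as the divisors leave the big cone; and (iii) the phrase ``granting Abundance'' concedes that the semiampleness of $(\varphi_k)_*(K_X+\Delta_i)$ on a putative limit model is an open conjecture in dimension at least four. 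Your alternative route --- proving that pseudo-effective klt adjoint divisors are automatically gen and then applying Proposition \ref{cor:5} or Theorem \ref{thm:scalingbig} --- runs into the same wall, since those results assume an ample divisor in the support and gen-ness in the interior, which is precisely what fails to be known once bigness is dropped.

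So the honest assessment is: your reduction of the problem to the behaviour at the boundary of the big cone is a correct and standard way to frame the difficulty, and it matches the paper's own perspective (the paper proves the big case as Theorem \ref{thmA} and states the general case as a conjecture implied by termination plus Abundance). But no proof is achieved, and none should be expected by these methods alone: crossing from big to pseudo-effective boundaries is not a technical refinement of the finite generation machinery of Section \ref{sec:graded}; it is essentially equivalent to the deepest open problems of the MMP. If you want a positive statement you can actually prove with the tools in this paper, aim for the known cases: dimension at most three, or the situation of Theorem \ref{cor:CY finitely generated}, where $K_X\sim_\Q 0$ lets one absorb the adjoint divisors into big boundaries and apply Theorem \ref{thmA} together with Lemma \ref{lem:3}.
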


This conjecture is implied by the full force of the MMP \cite{CS11}, including termination of any sequence of flips and Abundance, although a priori it is weaker than the MMP. In particular, the conjecture is a theorem in dimensions up to $3$.

Finally, the following result shows that Conjecture \ref{con:finitegeneration} and the Cone conjecture are, in some sense, consistent.

\begin{pro}\label{pro:conjecturesimply}
Let $X$ be an $n$-dimensional variety of Calabi-Yau type. Assume either Conjecture \ref{con:finitegeneration} in dimension $n$, or the Cone conjecture in dimension $n$. 

Then the cones $\Nef(X)\cap\Eff(X)$ and $\overline{\Mov}(X)\cap\Eff(X)$ are spanned by rational divisors.
\end{pro}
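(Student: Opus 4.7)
The plan is to treat the two hypotheses separately, and in each case to exhibit the cones $\Nef(X)\cap\Eff(X)$ and $\overline{\Mov}(X)\cap\Eff(X)$ as unions of rational polyhedral subcones.

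Suppose first that the Cone conjecture holds. By hypothesis there is a rational polyhedral fundamental domain $\Pi$ for the action of $\mcal A(X)$ on $\Nef(X)\cap\Eff(X)$, so in particular $\Pi$ has finitely many rational extremal rays. Every $g\in\mcal A(X)$ acts by an integer matrix on $N^1(X)$, so each translate $g\Pi$ is again rational polyhedral and contained in $\Nef(X)\cap\Eff(X)$. The identity $\Nef(X)\cap\Eff(X)=\bigcup_g g\Pi$ then expresses the cone as a union of rational polyhedral subcones, which is exactly what is required. The same argument with $\mcal B(X)$ and $\overline{\Mov}(X)\cap\Eff(X)$ handles the movable cone.

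Now suppose Conjecture \ref{con:finitegeneration} holds, and note that $K_X\sim_\Q 0$ since $X$ is of Calabi-Yau type. Fix $D\in\Nef(X)\cap\Eff(X)$ and pick an effective $\R$-divisor representative $D'=\sum_{i=1}^n\lambda_i P_i$ with $\lambda_i>0$ and $P_i$ prime. Choose rational ample divisors $A_1,\dots,A_r$ whose classes span $N^1(X)_\Q$, and, using Bertini, arrange that each pair $(X,A_j)$ is klt. For a sufficiently small rational $\varepsilon>0$ each pair $(X,\varepsilon P_i)$ is also klt, so by Conjecture \ref{con:finitegeneration} the adjoint ring with boundaries $\varepsilon P_i$ and $A_j$ is finitely generated. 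Since $K_X\sim_\Q 0$, the divisors $K_X+\varepsilon P_i$ and $K_X+A_j$ are $\Q$-linearly equivalent to $\varepsilon P_i$ and $A_j$ respectively, and Lemma \ref{lem:3}(1) then yields finite generation of
$$\mathfrak R = R(X; P_1,\dots,P_n, A_1,\dots,A_r).$$
Each $P_i$ and each $A_j$ lies in $\Supp\mathfrak R$, so $D'\in\sum\R_+ P_i\subseteq\Supp\mathfrak R$, and $\Supp\mathfrak R$ contains the ample divisor $A_1$. With $\pi\colon\Div_\R(X)\to N^1(X)_\R$ the natural projection, Proposition \ref{cor:5}(2) then implies that $\Supp\mathfrak R\cap\pi^{-1}\bigl(\Nef(X)\bigr)$ is rational polyhedral; its rational generators $w_1,\dots,w_N$ are effective divisors whose classes lie in $\Nef(X)\cap\Eff(X)$. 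Since $\pi(D')=D\in\Nef(X)$, the divisor $D'$ itself lies in this rational polyhedral cone, so we may write $D'=\sum c_k w_k$ with $c_k\geq 0$, and projecting gives $D=\sum c_k\pi(w_k)$, the required expression. The movable case is identical, invoking Proposition \ref{cor:5}(1) in place of (2).

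The substantial point in the second case is the choice of $\mathfrak R$: it has to include the components of some effective representative of $D$, contain an ample divisor so that Proposition \ref{cor:5}(2) applies, and be indexed by divisors of the form $K_X+\Delta_i$ with $(X,\Delta_i)$ klt so that Conjecture \ref{con:finitegeneration} can be invoked. The hypothesis $K_X\sim_\Q 0$ is exactly what reconciles these demands. A minor technicality is that one tacitly assumes $X$ has klt singularities so that small perturbations $(X,\varepsilon P_i)$ remain klt, which is standard in the Calabi-Yau setting.
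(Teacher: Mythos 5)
Your proof is correct and follows essentially the same route as the paper: in the Cone conjecture case you use the rational polyhedral translates $g\Pi$ of the fundamental domain, and in the finite generation case you take an effective representative of $D$, add ample divisors, get a finitely generated divisorial ring, and conclude via Proposition \ref{cor:5}. Your only deviation is that you spell out the finite generation step directly from Conjecture \ref{con:finitegeneration} via klt perturbations and $K_X\sim_\Q 0$ (where the paper cites Theorem \ref{cor:CY finitely generated}), which is if anything slightly more careful since the components of the effective representative need not be big.
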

\begin{proof}
I only show the statements for $\Nef(X)\cap\Eff(X)$, the rest is analogous.

Assume Conjecture \ref{con:finitegeneration} in dimension $n$. Let $D$ be an $\R$-divisor whose class is in $\Nef(X)\cap\Eff(X)$. Then we can write $D\equiv\sum_{i=1}^r\delta_i D_i$ for prime divisors $D_i$ and positive real numbers $\delta_i$. Fix an ample $\Q$-divisor $A$ on $X$. By Theorem \ref{cor:CY finitely generated}, the ring 
$$R(X;D_1,\dots,D_r,A)$$ 
is finitely generated, and hence, the cone $\mcal N=\pi^{-1}\big(\Nef(X)\big)\cap\sum\R_+D_i$ is rational polyhedral by Proposition \ref{cor:5}, where $\pi\colon\Div_\R(X)\to N^1(X)_\R$ is the natural map. Since $\pi(D)\in\mcal N$, the result follows.

Now assume the Cone conjecture in dimension $n$. Let $D$ be an $\R$-divisor whose class is in $\Nef(X)\cap\Eff(X)$, and let $\Pi$ be the fundamental domain for the action of $\mcal A(X)$ on $\Nef(X)\cap\Eff(X)$. Then there exists $g\in\mcal A(X)$ such that $D\in g\Pi$, and the conclusion follows since $g\Pi$ is a rational polyhedral cone.
\end{proof}

\bibliographystyle{amsalpha}

\bibliography{biblio}

\newcommand{\etalchar}[1]{$^{#1}$}
\providecommand{\bysame}{\leavevmode\hbox to3em{\hrulefill}\thinspace}
\providecommand{\MR}{\relax\ifhmode\unskip\space\fi MR }
% \MRhref is called by the amsart/book/proc definition of \MR.
\providecommand{\MRhref}[2]{%
  \href{http://www.ams.org/mathscinet-getitem?mr=#1}{#2}
}
\providecommand{\href}[2]{#2}
\begin{thebibliography}{BCHM10}

\bibitem[BCHM10]{BCHM}
C.~Birkar, P.~Cascini, C.~D. Hacon, and J.~M\textsuperscript{c}Kernan,
  \emph{Existence of minimal models for varieties of log general type}, J.
  Amer. Math. Soc. \textbf{23} (2010), no.~2, 405--468.

\bibitem[CL12a]{CaL12}
P.~Cascini and V.~Lazi\'c, \emph{The {M}inimal {M}odel {P}rogram revisited},
  Contributions to {A}lgebraic {G}eometry (Piotr Pragacz, ed.), {EMS} {S}eries
  of {C}ongress {R}eports, {EMS} {P}ublishing {H}ouse, {Z}\"urich, 2012,
  pp.~169--187.

\bibitem[CL12b]{CaL10}
\bysame, \emph{New outlook on the {M}inimal {M}odel {P}rogram, {I}}, Duke Math.
  J. \textbf{161} (2012), no.~12, 2415--2467.

\bibitem[CL12c]{CoL10}
A.~Corti and V.~Lazi\'c, \emph{New outlook on the {M}inimal {M}odel {P}rogram,
  {II}}, Math. Ann. (2012), DOI:10.1007/s00208-012-0858-1.

\bibitem[Cor11]{Corti11}
A.~Corti, \emph{Finite generation of adjoint rings after {L}azi\'c: an
  introduction}, Classification of Algebraic Varieties, EMS Series of Congress
  Reports, EMS Publishing House, 2011, pp.~197--220.

\bibitem[ELM{\etalchar{+}}06]{ELMNP}
L.~Ein, R.~Lazarsfeld, M.~Musta{\c{t}}{\u{a}}, M.~Nakamaye, and M.~Popa,
  \emph{Asymptotic invariants of base loci}, Ann. Inst. Fourier (Grenoble)
  \textbf{56} (2006), no.~6, 1701--1734.

\bibitem[HK00]{HK00}
Y.~Hu and S.~Keel, \emph{Mori dream spaces and {GIT}}, Michigan Math. J.
  \textbf{48} (2000), 331--348.

\bibitem[HK10]{HK10}
C.~D. Hacon and S.~J. Kov{\'a}cs, \emph{Classification of higher dimensional
  algebraic varieties}, Oberwolfach Seminars, vol.~41, Birkh\"auser Verlag,
  Basel, 2010.

\bibitem[HT10]{HT10}
B.~Hassett and Y.~Tschinkel, \emph{Flops on holomorphic symplectic fourfolds
  and determinantal cubic hypersurfaces}, J. Inst. Math. Jussieu \textbf{9}
  (2010), 125--153.

\bibitem[K{\etalchar{+}}92]{Kol92}
J.~Koll{\'a}r et~al., \emph{Flips and abundance for algebraic threefolds},
  Ast\'erisque 211, Soc.\ Math.\ France, Paris, 1992.

\bibitem[Kaw88]{Kaw88}
Y.~Kawamata, \emph{Crepant blowing-up of {$3$}-dimensional canonical
  singularities and its application to degenerations of surfaces}, Ann. of
  Math. (2) \textbf{127} (1988), no.~1, 93--163.

\bibitem[Kaw97]{Kaw97}
\bysame, \emph{On the cone of divisors of {C}alabi-{Y}au fiber spaces},
  Internat.\ J.\ Math. \textbf{8} (1997), 665--687.

\bibitem[KKL12]{KKL12}
A.-S. Kaloghiros, A.~K\"uronya, and V.~Lazi\'c, \emph{Finite generation and
  geography of models}, to appear in ``Minimal models and extremal rays'',
  Advanced Studies in Pure Mathematics, Mathematical Society of Japan, Tokyo,
  arXiv:1202.1164\setbox0=\hbox{2012}.

\bibitem[KM98]{KM98}
J.~Koll{\'a}r and S.~Mori, \emph{Birational geometry of algebraic varieties},
  Cambridge Tracts in Mathematics, vol. 134, Cambridge University Press,
  Cambridge, 1998.

\bibitem[Laz04]{Laz04}
R.~Lazarsfeld, \emph{Positivity in algebraic geometry. {I}, {II}}, Ergebnisse
  der Mathematik und ihrer Grenzgebiete, vol. 48, 49, Springer-Verlag, Berlin,
  2004.

\bibitem[Laz09]{Laz09}
V.~Lazi\'c, \emph{Adjoint rings are finitely generated},
  arXiv:0905.2707\setbox0=\hbox{2009}.

\bibitem[LP12]{LP12}
V.~Lazi\'c and Th. Peternell, \emph{On the {C}one conjecture for {C}alabi-{Y}au
  manifolds with {P}icard number two}, arXiv:1207.3653\setbox0=\hbox{2012}.

\bibitem[Mar11]{Mar11}
E.~Markman, \emph{A survey of {T}orelli and monodromy results for
  holomorphic-symplectic varieties}, Complex and Differential Geometry
  (W.~Ebeling, K.~Hulek, and K.~Smoczyk, eds.), Springer Proceedings in
  Mathematics, vol.~8, Springer Berlin Heidelberg, 2011, pp.~257--322.

\bibitem[M{\textsuperscript{c}}K10]{McK10}
J.~M{\textsuperscript{c}}Kernan, \emph{Mori dream spaces}, Japan.\ J.\ Math.
  \textbf{5} (2010), no.~1, 127--151.

\bibitem[Mor87]{Mor87}
S.~Mori, \emph{Classification of higher-dimensional varieties}, Algebraic
  geometry, {B}owdoin, 1985 ({B}runswick, {M}aine, 1985), Proc. Sympos. Pure
  Math., vol.~46, Amer. Math. Soc., Providence, RI, 1987, pp.~269--331.

\bibitem[Mor88]{Mor88}
\bysame, \emph{Flip theorem and the existence of minimal models for 3-folds},
  J.\ Amer.\ Math.\ Soc. \textbf{1} (1988), no.~1, 117--253.

\bibitem[Mor93]{Mor93}
D.~Morrison, \emph{Compactifications of moduli spaces inspired by mirror
  symmetry}, Ast\'erisque \textbf{218} (1993), 243--271.

\bibitem[Nak04]{Nak04}
N.~Nakayama, \emph{Zariski-decomposition and abundance}, MSJ Memoirs, vol.~14,
  Mathematical Society of Japan, Tokyo, 2004.

\bibitem[Nam85]{Nam85}
Y.~Namikawa, \emph{Periods of {E}nriques surfaces}, Math. Ann. \textbf{270}
  (1985), 201--222.

\bibitem[Ogu11]{Og11}
K.~Oguiso, \emph{Birational automorphism groups and the movable cone theorem
  for {C}alabi-{Y}au manifolds of {W}ehler type via universal {C}oxeter
  groups}, arXiv:1107.5862\setbox0=\hbox{2011}.

\bibitem[Ogu12]{Og12}
\bysame, \emph{Automorphism groups of {C}alabi-{Y}au manifolds of {P}icard
  number two}, arXiv:1206.1649\setbox0=\hbox{2012}.

\bibitem[PS12]{PS10}
A.~Prendergast-Smith, \emph{The {C}one {C}onjecture for {A}belian {V}arieties},
  J. Math. Sci. Univ. Tokyo \textbf{19} (2012), no.~2, 243--261.

\bibitem[Rei80]{Rei80}
M.~Reid, \emph{Canonical 3-folds}, Journ\'ees de {G}\'eom\'etrie {A}lg\'ebrique
  d'Angers (A.~Beauville, ed.), Sijthoof and Nordhoof, Alphen aan den Rijn,
  1980, pp.~273--310.

\bibitem[Rei00]{Rei00}
\bysame, \emph{Twenty-five years of {$3$}-folds---an old person's view},
  Explicit birational geometry of 3-folds, London Math. Soc. Lecture Note Ser.,
  vol. 281, Cambridge Univ. Press, Cambridge, 2000, pp.~313--343.

\bibitem[SC11]{CS11}
V.~V. Shokurov and S.~R. Choi, \emph{Geography of log models: theory and
  applications}, Cent. Eur. J. Math. \textbf{9} (2011), no.~3, 489--534.

\bibitem[Som86]{Som86}
A.~J. Sommese, \emph{On the adjunction theoretic structure of projective
  varieties}, Complex analysis and algebraic geometry ({G}\"ottingen, 1985),
  Lecture Notes in Math., vol. 1194, Springer, Berlin, 1986, pp.~175--213.

\bibitem[Ste85]{Ste85}
H.~Sterk, \emph{Finiteness results for algebraic {K3} surfaces}, Math. Z.
  \textbf{189} (1985), 507--513.

\bibitem[Tot10]{Tot10}
B.~Totaro, \emph{The cone conjecture for {C}alabi-{Y}au pairs in dimension
  two}, Duke Math. J. \textbf{154} (2010), 241--263.

\bibitem[Tot12]{Tot10b}
\bysame, \emph{Algebraic surfaces and hyperbolic geometry}, Current
  developments in algebraic geometry, Math. Sci. Res. Inst. Publ., vol.~59,
  Cambridge Univ. Press, Cambridge, 2012, pp.~405--426.

\bibitem[Zar62]{Zar62}
O.~Zariski, \emph{The theorem of {R}iemann-{R}och for high multiples of an
  effective divisor on an algebraic surface}, Ann. of Math. \textbf{76} (1962),
  no.~3, 560--615, with an appendix by {D}avid {M}umford.

\end{thebibliography}
\end{document}